\numberwithin{equation}{section}
\theoremstyle{plain}
\newtheorem{theorem}{Theorem}[section]
\newtheorem{proposition}[theorem]{Proposition}
\newtheorem{lemma}[theorem]{Lemma}
\newtheorem{corollary}[theorem]{Corollary}
\newtheorem{definition}[theorem]{Definition}
\newtheorem{example}[theorem]{Example}
\newtheorem{remark}[theorem]{Remark}
\newtheorem{convention}[theorem]{Convention}
\newtheorem{problem}[theorem]{Problem}
\newtheorem{conjecture}[theorem]{Conjecture}
\newenvironment{proof}{{\noindent \textbf{Proof}\,\,}}{\hspace*{\fill}$\Box$\medskip}
\def\la{\lambda}
\def\mcv{\mathcal V}
\def\cc{\mathbb C}
\def\oc{\overline\cc}
\def\jos{\mathbf{Jos}}
\def\rr{\mathbb R}
\def\zz{\mathbb Z}
\def\La{\Lambda}
\def\cp{\mathbb{CP}}
\def\diag{\operatorname{diag}}
\def\re{\operatorname{Re}}
\def\mcg{\mathcal G}
\def\nn{\mathbb N}
\def\mcm{\mathcal M}
\def\mcc{\mathcal C}
\def\mcp{\mathcal P}
\def\wh#1{\widehat#1}
\def\var{\varepsilon}
\def\wt#1{\widetilde#1}
\def\tt{\mathbb T}
\title{On germs of constriction curves in model of overdamped Josephson junction, dynamical  isomonodromic foliation and Painlev\'e 3 equation}
\author{Alexey Glutsyuk\thanks{CNRS, UMR 5669 (UMPA, ENS de Lyon), France. E-mail: 
aglutsyu@ens-lyon.fr} \thanks{HSE University, Moscow, Russia} \thanks{Kharkevich Institute for Information Transmission Problems (IITP, RAS), Moscow}} 
\begin{document}
\maketitle
\vspace{-0.3cm}
{\it To my dear teacher Yu.S.Ilyashenko on the occasion of his 80-th birthday}
\begin{abstract} B.Josephson (Nobel 
Prize, 1973) predicted tunnelling effect for  a system of two superconductors 
separated by a  narrow dielectric (which is called {\it Josephson junction}): existence of a supercurrent through it and equations governing it. 
The {\it overdamped Josephson junction} 
is modeled by  a family of differential equations on 2-torus depending on 3
 parameters:  $B$ (abscissa), $A$ (ordinate), 
$\omega$ (frequency). We study its 
{\it rotation number} $\rho(B,A;\omega)$ 
as a function of  parameters.  
The {\it three-dimensional  phase-lock areas} are the  level sets $L_r:=\{\rho=r\}\subset\rr^3$ with non-empty 
interiors; they exist  for $r\in\zz$ (Buchstaber, Karpov, Tertychnyi). For every fixed $\omega>0$ 
and $r\in\zz$  the planar
slice $L_r\cap(\rr^2_{B,A}\times\{\omega\})$  is a garland of domains going vertically to infinity and separated by points; those separating points for which $A\neq0$ are called {\it constrictions.}  In a joint paper by Yu.Bibilo and the author, it was shown that 1) at each constriction the rescaled abscissa  $\ell:=\frac B\omega$ is integer and $\ell=\rho$; 2) 
the family $Constr_\ell$ of constrictions with  given $\ell\in\zz$ is an analytic submanifold in 
$(\rr^2_+)_{a,s}$, $a=\omega^{-1}$, $s=\frac A\omega$.  
In the present paper we show that 1) the limit points of $Constr_\ell$ are $\beta_{\ell,k}=(0,s_{\ell,k})$, where  
$s_{\ell,k}$ are the positive zeros of the $\ell$-th Bessel function $J_\ell(s)$; 2) to each 
$\beta_{\ell,k}$ accumulates exactly one  its component $\mcc_{\ell,k}$ (constriction curve), and it lands at   $\beta_{\ell,k}$ regularly. 
 Known numerical phase-lock area pictures show that high components of interior of each phase-lock area $L_r$ look  similar.   In his   paper with Bibilo,  
 the author introduced a candidate to the self-similarity map between neighbor components: the Poincar\'e map of the  dynamical isomonodromic foliation  
governed by Painlev\'e 3 equation. Whenever  well-defined, it preserves the  
 rotation number function. We show that the Poincar\'e map is well-defined on a neighborhood of the plane $\{ a=0\}\subset\rr^2_{\ell,a}\times(\rr_+)_s$, and it sends each constriction curve 
  germ $(\mcc_{\ell,k},\beta_{\ell,k})$ to  $(\mcc_{\ell,k+1},\beta_{\ell,k+1})$. 
\end{abstract}
 \tableofcontents
\section{Introduction and main results}
\subsection{Model of Josephson junction. Introduction and brief description of main results}
The tunnelling effect predicted by B.Josephson in 1962 \cite{josephson} (Nobel 
Prize, 1973) deals with a {\it Josephson junction:} a system of two superconductors 
separated by a  narrow dielectric. It states existence of a supercurrent through it and yields  equations governing it (discovered by Josephson). It was confirmed experimentally by P.W.Anderson and J.M.Rowell in 1963 \cite{ar}. 

The model of the so-called {\it overdamped Josephson junction},  
see \cite{stewart, mcc,  lev,  schmidt}, \cite[p. 306]{bar}, \cite[pp. 337--340]{lich}, 
\cite[p.193]{lich-rus}, \cite[p. 88]{likh-ulr} is described by the family of nonlinear differential equations
 \begin{equation}\frac{d\phi}{dt}=-\sin \phi + B + A \cos\omega t, \ \omega>0, \ B\geq0.\label{jos}\end{equation}
 Here $\phi$ is the  difference of phases (arguments) of the complex-valued 
 wave functions describing the quantum mechanic 
 states of the two superconductors. Its derivative is 
 equal to the voltage up to known constant factor.  
   
Equations (\ref{jos}) also arise in several models in physics, mechanics and geometry, e.g.,  
in  planimeters, see  \cite{Foote, foott}.

 The variable 
and parameter changes 
\begin{equation}\tau:=\omega t, \ \theta:=\phi+\frac{\pi}2, \ \ell:=\frac B\omega, \ a=\frac1\omega, \ s:=\frac A{\omega},\label{elmu}\end{equation}
 transform (\ref{jos}) to a 
non-autonomous ordinary differential equation on the two-torus $\mathbb T^2=S^1\times S^1$ with coordinates 
$(\theta,\tau)\in\rr^2\slash2\pi\zz^2$: 
\begin{equation} \frac{d\theta}{d\tau}=a\cos\theta + \ell + s \cos \tau.\label{jostor}\end{equation}
The graphs of its solutions are the orbits of the vector field 
\begin{equation}\begin{cases}  \dot\theta=a\cos\theta + \ell + s \cos \tau\\
 \dot \tau=1\end{cases}\label{josvec}\end{equation}
on $\mathbb T^2$. The {\it rotation number} of its flow, see \cite[p. 104]{arn},  is a function $\rho(B,A)$ of parameters\footnote{There is a misprint, 
missing $2\pi$ in the denominator, in analogous formulas in previous papers of the 
 author  with co-authors: \cite[formula (2.2)]{4}, \cite[the formula after (1.16)]{bg2}.}:
$$\rho(B,A;\omega)=\lim_{k\to+\infty}\frac{\theta(2\pi k)}{2\pi k}.$$
Here $\theta(\tau)$ is a general $\rr$-valued solution of the first equation in (\ref{josvec}) 
whose parameter is the initial condition 
for $\tau=0$. Recall that the rotation number is independent on the choice of the initial condition, see \cite[p.104]{arn}. 
 The parameter $B$ is called {\it abscissa,}  $A$ is called the {\it ordinate,} $\omega$ is called {\it frequency.} 
 Recall the following well-known definition. 

\begin{definition} \label{defasl} (cf. \cite[definition 1.1]{4}) The {\it $r$-th planar phase-lock area} is the level set 
$$L_r(\omega)=\{(B,A)\in\rr^2 \ | \ \rho(B,A;\omega)=r\}\subset\rr^2_{B,A},$$ 
provided that it has a non-empty interior. 
\end{definition}
The planar phase-lock areas were studied by V.M.Buchstaber, O.V.Karpov, S.I.Tertychnyi et al, see \cite{bibgl}--\cite{bt1}, 
\cite{LSh2009, IRF, krs, RK}, \cite{4}--\cite{gn19}, \cite{tert, tert2} and references therein.  The  following  results are known and proved mathematically:

1) Planar phase-lock areas exist only for integer rotation number  values 
({\it the rotation number quantization effect} observed and proved by V.M.Buchstaber, O.V.Karpov and S.I.Tertychnyi in \cite{buch2}, later also proved 
in  \cite{LSh2009, IRF}).

2) The boundary of each $L_r(\omega)$
 consists of two analytic curves, which are the graphs of two
functions $B=G_{r,\alpha}(A)$, $\alpha=0,\pi$, (see \cite{buch1}; this fact was later explained by A.V.Klimenko via symmetry, see \cite{RK}).

3)  The latter functions have Bessel asymptotics
\begin{equation}\begin{cases} G_{r,0}(A)=r\omega-J_r(-\frac A\omega)+O(\frac{\ln |A|}A) \\ 
G_{r,\pi}(A)=r\omega+J_r(-\frac A\omega)+O(\frac{\ln |A|}A)\end{cases}, \text{ as } A\to\infty\label{bessas}\end{equation}
 (observed and proved on physics level in ~\cite{shap}, see also \cite[p. 338]{lich},
 \cite[section 11.1]{bar}, ~\cite{buch2006}; proved mathematically in ~\cite{RK}); $J_r$ is the $r$-th Bessel function.
 
 4) Each planar 
 phase-lock area is a garland  of infinitely many bounded domains going to infinity in the vertical direction. 
 In this chain each two subsequent domains are separated by one point. This was  
 proved in \cite{RK} using the 
 above statement 3). Those  separation 
 points that lie on the horizontal $B$-axis, namely $A=0$, were calculated explicitly,  and 
 we call them the {\it growth points}, see \cite[corollary 3]{buch1}. The other separation points, which  lie outside the horizontal $B$-axis, are called the  {\it constrictions}. 
 
 5)  For every $r\in\zz$ and $\omega>0$ the $r$-th planar phase-lock area $L_r(\omega)$ is symmetric to the $-r$-th one with respect to the vertical 
 $A$-axis.
  
6) Every planar phase-lock area is symmetric with respect to the horizontal $B$-axis. See Figures 1--3 below.

7) In each planar phase-lock area $L_r(\omega)$ all its  constrictions lie in the same 
vertical line $\Lambda_r:=\{ B=r\omega\}$, see \cite[theorem 1.4]{bibgl}.

8)  Each constriction  $X\in L_r(\omega)$ is {\it positive:} the intersection of the interior of the area 
$L_r(\omega)$ with the vertical line $\Lambda_r$  contains a punctured neighborhood of the 
point $X$ in $\Lambda_r$,  see \cite[theorem 1.7]{bibgl}. 

9) For every fixed $\ell\in\zz$ the set of constrictions $(B,A;\omega)$ 
with abscissa $B=\ell\omega$ (i.e., constrictions  in $L_\ell(\omega)$), with variable 
$(A,\omega)$,  
is a {\it one-dimensional analytic submanifold} in 
$(\rr^2_+)_{A,\omega}$. We will denote it $Constr_\ell$ and represent it in the coordinates 
$(a,s)$: 
$$a:=\frac A{\omega}, \ \ s:=\frac1\omega,$$ 
$$Constr_\ell:=\{(a,s)\in\rr^2_+ \ | \ \left(\frac\ell a, \frac sa; a^{-1}\right) \text{ is a constriction}\}\subset(\rr^2_+)_{a,s}.$$

See Figures 1--3 of planar phase-lock areas and their  constrictions for 
$\omega=1,  0.5, 0.3$. 
 
 \begin{figure}[ht]
  \begin{center}
   \epsfig{file=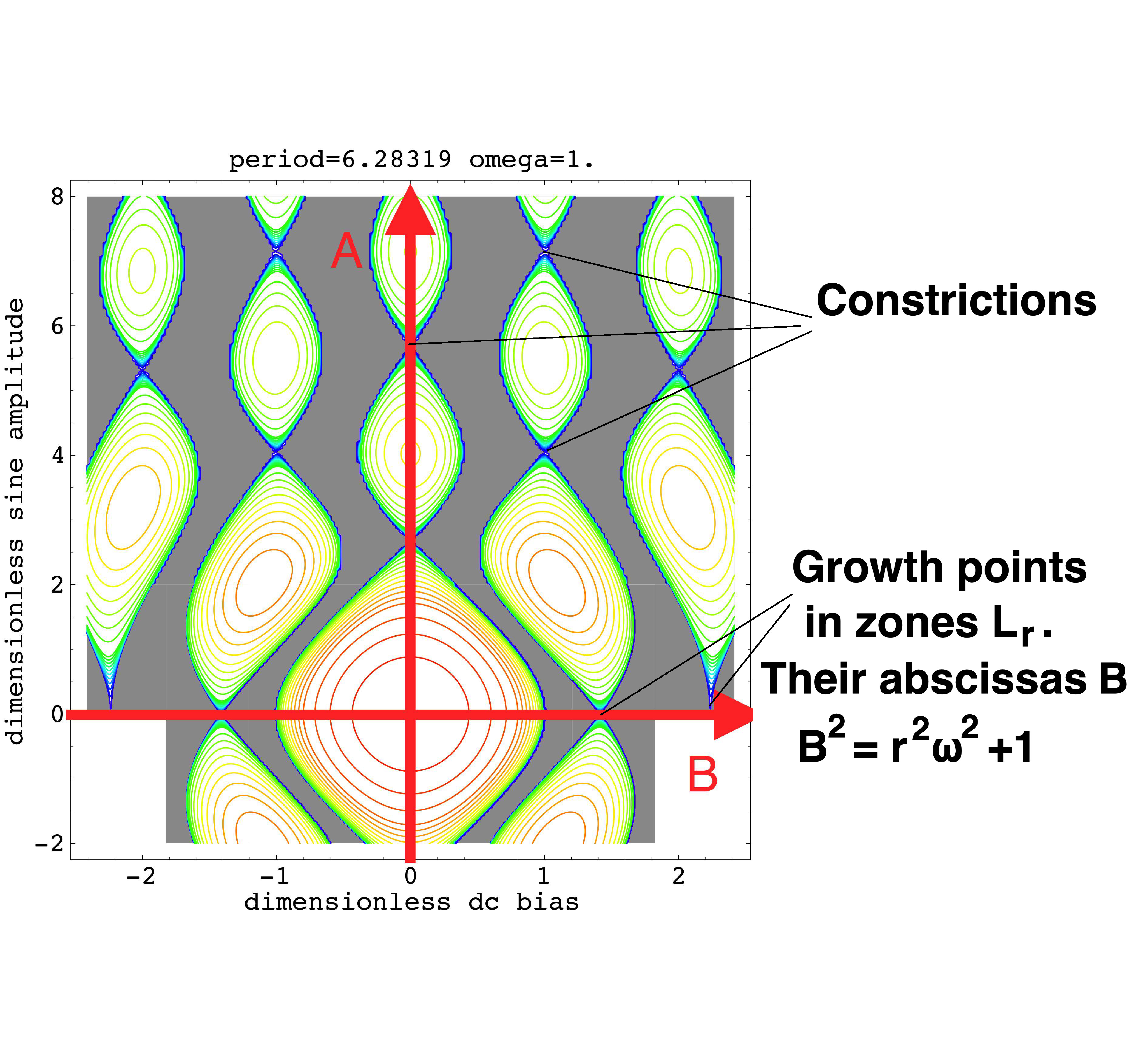, width=27em}
    \caption{Phase-lock areas and their constrictions for $\omega=1$. The abscissa is $B$, the ordinate is $A$. Figure 
    taken from paper \cite[fig. 1b)]{bg2} with authors' permission, with coordinate axes added.}
  \end{center}
\end{figure} 

 \begin{figure}[ht]
  \begin{center}
   \epsfig{file=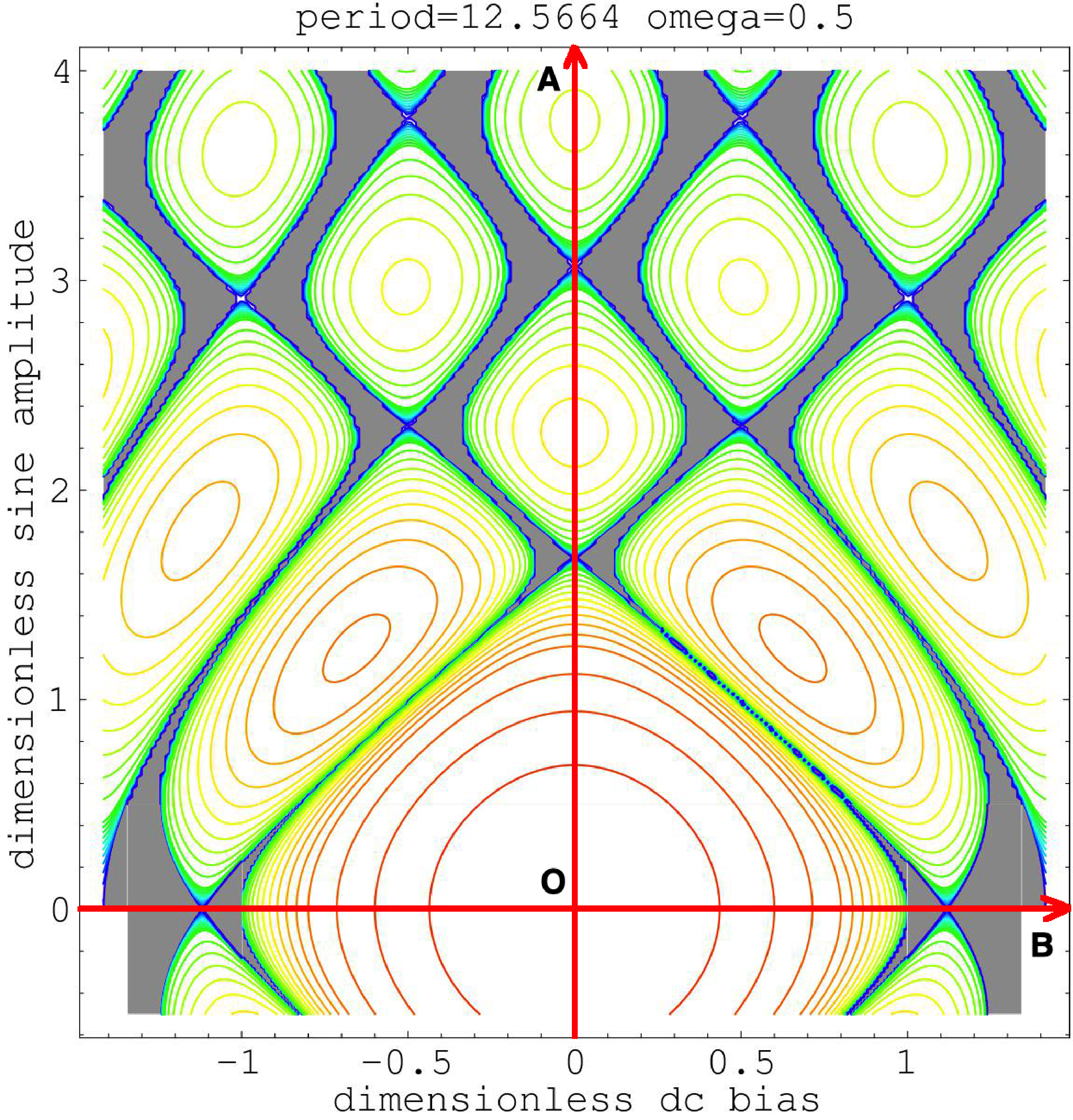, width=20em}
    \caption{Phase-lock areas and their constrictions  for $\omega=0.5$. Figure taken from papers \cite[fig. 1d)]{bg2}, 
    \cite[p. 331]{bt1} with authors' permission, with coordinate axes added.}
    \end{center}
\end{figure} 

\begin{figure}[ht]
  \begin{center}
   \epsfig{file=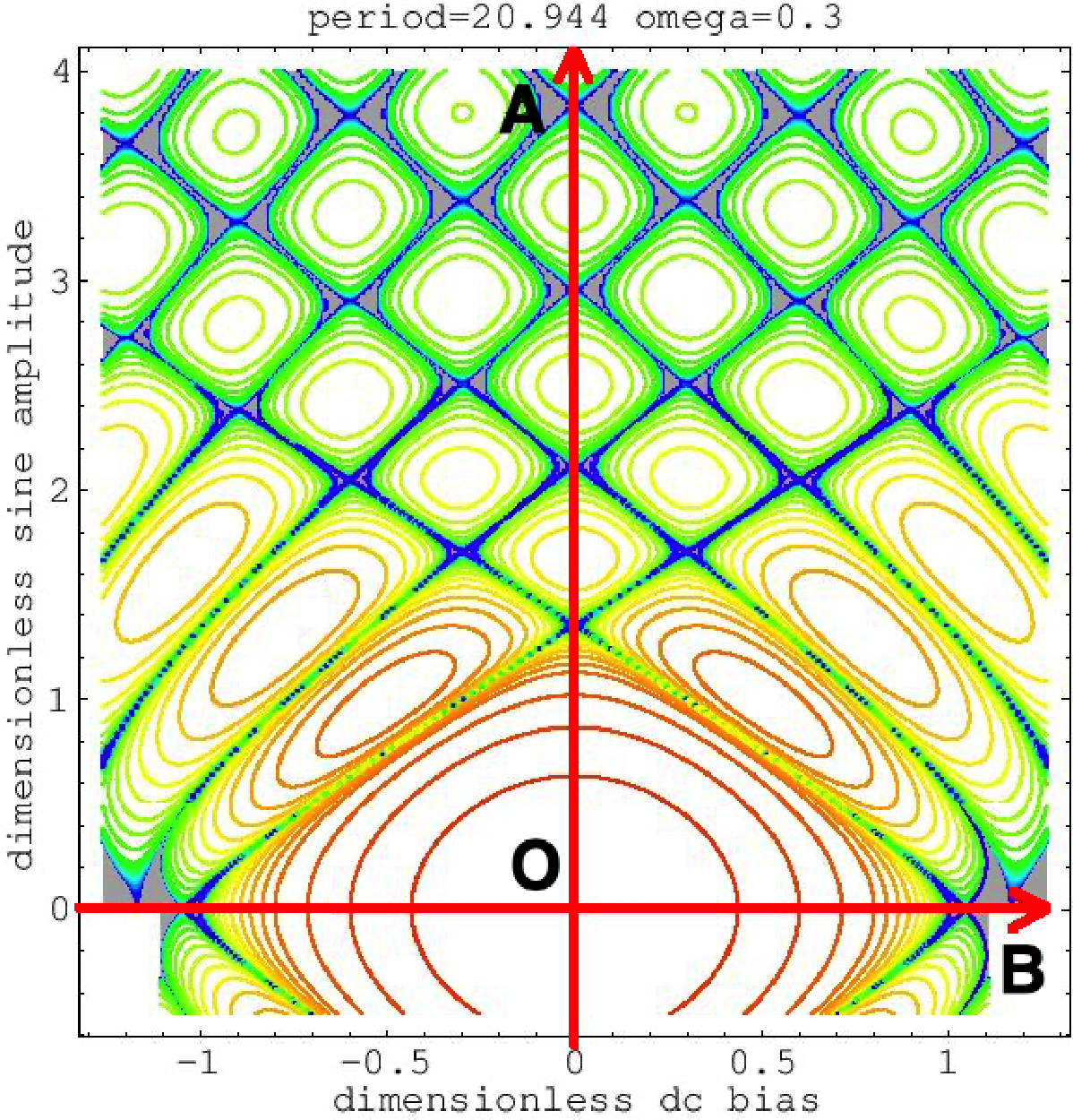, width=20em}
    \caption{Phase-lock areas and their constrictions  for $\omega=0.3$. Figure 
    taken from paper \cite[fig. 1e)]{bg2} with authors' permission, with coordinate axes added.}
     \end{center}
\end{figure} 
For a survey of other results on model (\ref{jos}) of overdamped Josephson junction and related topics see the paper \cite{bibgl} and its bibliography.
\begin{definition} A {\it constriction curve} is a connected component of the above constriction submanifold 
$Constr_\ell\subset(\rr^2_+)_{a,s}$. The  corresponding curve  
 in the parameters $(B, A; \omega)$, $B=\ell\omega=\frac\ell a$, $A=\frac sa$, $\omega=a^{-1})$ (which is a family of constrictions lying in $L_\ell(\omega)$) will be also called a {\it constriction curve}.
 \end{definition}
 
It would be interesting to study the {\it three-dimensional  phase-lock areas:}  
those level sets 
$$L_r=\{(B,A;\omega)\in\rr^2_{B,A}\times(\rr_+)_\omega \ | \ \rho(B,A;\omega)=r\}$$
 for which $Int(L_r)\neq\emptyset$. 
\begin{remark} The Buchstaber--Karpov--Tertychnyi  rotation number quantization effect also holds for the above three-dimensional areas $L_r$: they exist only for integer values of the rotation number $r$. This 
follows immediately from their analogous result and  its proof in two dimensions \cite{buch2}. 
 One has 
\begin{equation}L_r=\cup_{\omega>0}(L_r(\omega)\times\{\omega\}),\label{lrom}\end{equation}
where  $L_r(\omega)$ are the planar phase-lock areas for given $\omega$. 
The boundary $\partial L_r$ is the union of graphs  $\{ B=g_{r,\alpha}(A,\omega)\}$,  of 
functions $g_{r,\alpha}$, $\alpha\in\{0,\pi\}$, analytic on $\rr\times\rr_+$, whose restrictions to 
each line $\omega=const$ coincide with the corresponding functions $G_{r,\alpha}$. The 
proof of this fact repeats  the proof of the analogous statement for planar  
phase-lock areas, see \cite{buch1, RK}. Thus, 
\begin{equation} Int(L_r)=\cup_{\omega>0}Int(L_r(\omega))\times\{\omega\}.\label{intint}
\end{equation}
This implies that {\it the interior of each three-dimensional phase-lock  area $L_r$ is a union of domains separated by constriction curves} or the {\it growth point curve} $\{ A=0, \ B=\sqrt{r^2\omega^2+1}\}$.
\end{remark}

\begin{problem} Describe the asymptotic behavior of the  constriction curves.
\end{problem}

It follows from \cite[lemma 4.14 and arguments on pp. 5459--5460]{bibgl} that {\it the constriction 
submanifold $Constr_\ell$ accumulates to no point of the $a$-axis.} 

One of the main results of the present paper is Theorem \ref{cangerms} (Subsection 1.2).  It states that  the limit set of the constriction submanifold $Constr_\ell$ is the infinite sequence of   the points  
$\beta_{\ell,k}=(0,s_{\ell,k})$ of the $s$-axis,  
where $s_{\ell,1}, s_{\ell,2},\dots$ are the positive zeros of the Bessel function $J_\ell$;  
 exactly one constriction curve, denoted by $\mcc_{\ell,k}$, accumulates to each $\beta_{\ell,k}$ (which is its unique limit point); each $\mcc_{\ell,k}$  lands 
at $\beta_{\ell,k}$ regularly. Using Theorem \ref{cangerms}, we deduce the 
following theorem.

\begin{theorem} \label{thgarl} The interior of each three-dimensional phase-lock area has 
infinitely many components. 
\end{theorem}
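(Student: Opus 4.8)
The plan is to deduce Theorem \ref{thgarl} from Theorem \ref{cangerms} together with the structural description of $Int(L_r)$ recalled in the Remark. The key observation is equation (\ref{intint}): the interior of the three-dimensional area $L_r$ is the union over $\omega>0$ of the planar interiors $Int(L_r(\omega))$, and by item 4) together with the Remark, the connected components of $Int(L_r)$ are separated precisely by the constriction curves $\mcc_{\ell,k}$ (with $\ell=r$) and by the single growth point curve $\{A=0,\ B=\sqrt{r^2\omega^2+1}\}$. Thus to show that $Int(L_r)$ has infinitely many components, I would show that there are infinitely many distinct separating constriction curves, and then argue that distinct constriction curves genuinely separate distinct components rather than all bounding one and the same component.

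\medskip

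\noindent First I would invoke Theorem \ref{cangerms} to produce the infinite family of constriction curves. By that theorem, the limit set of $Constr_\ell$ (with $\ell=r$) consists of the infinitely many points $\beta_{r,k}=(0,s_{r,k})$, $k=1,2,\dots$, where $s_{r,k}$ are the positive zeros of the Bessel function $J_r$; moreover exactly one constriction curve $\mcc_{r,k}$ accumulates to each $\beta_{r,k}$, and these curves are pairwise distinct since each has its own unique limit point. Since the positive zeros of $J_r$ form an infinite discrete sequence, this yields infinitely many pairwise distinct constriction curves $\mcc_{r,1},\mcc_{r,2},\dots$ in the parameter region, hence infinitely many distinct separating loci inside $L_r$.

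\medskip

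\noindent Next I would translate ``infinitely many separators'' into ``infinitely many components.'' For a fixed $\omega$, the planar slice $L_r(\omega)$ is by item 4) a garland of bounded domains stacked in the vertical $A$-direction and separated one from the next by single points (constrictions or the growth point). The plan is to pick a suitable sequence of frequencies $\omega_k$ so that the curve $\mcc_{r,k}$ contributes an actual constriction separating two consecutive domains in that slice; equivalently, I would fix one $\omega_0$ small enough that the slice $L_r(\omega_0)$ already exhibits arbitrarily many distinct constrictions (as is visible in Figures 1--3, where lowering $\omega$ reveals more constrictions), and then read off the components of the slice directly. Because within a single planar slice consecutive domains meet only at the separating point, and a three-dimensional component of $Int(L_r)$ intersects the slice $\{\omega=\omega_0\}$ in a union of planar components by (\ref{intint}), two planar domains separated by a constriction cannot lie in the same three-dimensional component unless they are joined through a nearby slice; I would rule this out using item 7), namely that all constrictions of $L_r(\omega)$ lie on the single vertical line $\Lambda_r=\{B=r\omega\}$, so the separating structure persists across slices and the garland cannot reconnect.

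\medskip

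\noindent \textbf{The main obstacle} I expect is precisely this last point: showing that a constriction separating two planar domains at $\omega=\omega_0$ is not an artifact of that single slice but genuinely disconnects two three-dimensional components, i.e.\ that the two neighboring domains are not joined by going around the constriction curve through neighboring frequencies. To handle this I would use that each $\mcc_{r,k}$ lands regularly at its limit point $\beta_{r,k}$ on the axis $\{a=0\}$ (equivalently $\omega\to\infty$), so that near this limit the constriction curve is an embedded arc cleanly separating the surface $\partial L_r$ into two sheets; combined with the real-analyticity of the boundary functions $g_{r,\alpha}$ from the Remark and the fact that constrictions are confined to $\Lambda_r$, this gives a local product structure near $\beta_{r,k}$ forcing the two adjacent domains into distinct components. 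Counting one new component per $\beta_{r,k}$ then yields infinitely many.
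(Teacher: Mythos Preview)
Your overall strategy matches the paper's: use Theorem \ref{cangerms} to produce infinitely many constriction curves $\mcc_{r,k}$, then argue that they separate $Int(L_r)$ into infinitely many components, with the crux being to rule out reconnection through other $\omega$-slices. You have correctly identified the obstacle. However, your proposed resolution of it is not yet a proof: a ``local product structure near $\beta_{r,k}$'' establishes only that the curve $\mcc_{r,k}$ locally separates two pieces of $Int(L_r)$; it does not by itself prevent those pieces from reconnecting globally at larger $a$ (smaller $\omega$), where the constriction curves may fold, where queer constriction curves may be present, and where constrictions in a planar slice may be born or die as $\omega$ varies. The fact that constrictions stay on $\Lambda_r$ does not help here, since a path in the three-dimensional interior need not stay near $\Lambda_r$.

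The paper closes this gap with a topological invariant. To each $x\in Int(L_r)$ it assigns a \emph{code}: the set of constriction curves $\mcc$ that contain an odd number (counted with intersection multiplicity) of constrictions of $L_r(\omega(x))$ lying below $x$ in the planar slice. The key observation is that this code is constant on each connected component of $Int(L_r)$: as $x$ moves continuously inside $Int(L_r)$, constrictions below it can only appear or disappear in pairs lying on the same curve (arising from a tangency of the line $\{a=\omega^{-1}(x)\}$ with some $\mcc$), so the parity is preserved. Then, using Theorem \ref{cangerms}, for any $N$ one chooses $a_0$ small enough that the line $\{a=a_0\}$ meets each of $\mcc_{r,1},\dots,\mcc_{r,N}$ transversally at a single point near $\beta_{r,1},\dots,\beta_{r,N}$ and meets no other part of $Constr_r$ in that range; the planar component of $Int(L_r(a_0^{-1}))$ just above the $k$-th of these constrictions then has code $\{\mcc_{r,1},\dots,\mcc_{r,k}\}$. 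Thus at least $N$ distinct codes, hence at least $N$ distinct three-dimensional components, are realized. Your local-structure idea is essentially the setup for this last step, but the code invariant is the device that converts local separation into a global component count.
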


\begin{conjecture} \label{conjeach} Each constriction curve 
lands at some $\beta_{\ell,k}$, i.e., coincides with some $\mcc_{\ell,k}$. 
\end{conjecture}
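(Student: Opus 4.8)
The plan is to deduce the conjecture from Theorem~\ref{cangerms} by excluding the only two ways a component could fail to be one of the $\mcc_{\ell,k}$. Let $\mcc$ be a connected component of $Constr_\ell$. If $\mcc$ has \emph{any} limit point on the boundary of the closed quadrant, then by Theorem~\ref{cangerms} (whose limit set is exactly the sequence $\beta_{\ell,k}$) that point is some $\beta_{\ell,k}$, and by the uniqueness clause of the same theorem the component accumulating there is $\mcc_{\ell,k}$, forcing $\mcc=\mcc_{\ell,k}$; accumulation on the $a$-axis $\{s=0\}$ is already excluded by \cite{bibgl} since constrictions have $A\neq0$. Hence it suffices to prove that every component accumulates at the boundary, i.e. to rule out (i) compact components (embedded circles inside $\{a>0,\,s>0\}$) and (ii) components escaping to infinity while avoiding all $\beta_{\ell,k}$, namely as $a\to\infty$ (equivalently $\omega\to0$, the adiabatic limit) or as $s\to\infty$ (equivalently $A\to\infty$) with $a$ bounded below.

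To exclude compact components I would use the real-analytic defining structure of constrictions together with a monotonicity of one coordinate. Writing $Constr_\ell$ locally as the regular zero set of the analytic function that cuts out constrictions, I would show that at each of its points the zero set is transverse to the vertical lines $\{s=\mathrm{const}\}$, so that $s$ is a local analytic diffeomorphism onto an interval along the component; then $s|_{\mcc}$ has no interior local extremum, each component is a graph $a=a(s)$ over an $s$-interval, and a closed loop is impossible. The needed transversality should follow from the regular-landing statement of Theorem~\ref{cangerms} propagated along the component, or from the explicit form of the constriction equation in the isomonodromic variables.

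To exclude escape to infinity I would analyze the two asymptotic regimes. As $A\to\infty$ (large $s$), the Bessel asymptotics~(\ref{bessas}) control the boundary curves $G_{r,\alpha}$ and should confine the pinch points of the garland $L_\ell(\omega)$ so tightly that the only constrictions with large $s$ are those approaching the points $\beta_{\ell,k}$ (whose $s$-coordinates are the Bessel zeros $s_{\ell,k}\to\infty$) along the curves $\mcc_{\ell,k}$ themselves. As $\omega\to0$ (large $a$), I would show the constriction equation degenerates so that no constriction with $A\neq0$ persists, precluding escape in that direction. The most structural route here is through the dynamical isomonodromic foliation: its Poincar\'e map $\Phi$, shown in this paper to be well-defined near $\{a=0\}$, to preserve the rotation number, and to carry the germ $(\mcc_{\ell,k},\beta_{\ell,k})$ onto $(\mcc_{\ell,k+1},\beta_{\ell,k+1})$, maps $Constr_\ell$ into itself (since $\rho=\ell$ is preserved); after extending $\Phi$ across the region swept by $\mcc$ by continuing the Painlev\'e~3 deformation along its pole-free locus, the backward iterates $\Phi^{-n}$ would transport any point of $\mcc$ into the neighborhood of $\{a=0\}$ governed by Theorem~\ref{cangerms}, identifying a landing point $\beta_{\ell,k}$ and hence $\mcc=\mcc_{\ell,k}$.

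The main obstacle is precisely the passage from local boundary control to a global statement. The paper's machinery pins down $Constr_\ell$ only near $\{a=0\}$ — through the Poincar\'e map and the regular landing at the $\beta_{\ell,k}$ — whereas the conjecture concerns the global topology of each component throughout the open quadrant. Making the foliation argument rigorous requires extending $\Phi$ across the locus where the Painlev\'e~3 transcendent may develop poles, proving that its backward orbits stay inside $Constr_\ell$ and actually reach $\{a=0\}$ (a direction of motion in $a$ that the germ statement does not by itself supply), and simultaneously ruling out both ovals and escape to infinity without a global first integral. Controlling this Painlev\'e~3 pole locus on the whole quadrant is, I expect, the crux, and is why the statement is posed as a conjecture rather than a theorem.
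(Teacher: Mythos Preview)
The statement you are attempting is \emph{not proved in the paper}: it is stated there as Conjecture~\ref{conjeach} and left open. The paper only shows (Proposition~\ref{conjs}) that a component is queer if and only if the coordinate $a$ is unbounded on \emph{both} of its sides, and that Conjecture~\ref{conj3d} would imply Conjecture~\ref{conjeach}. So there is no ``paper's own proof'' to compare against; your proposal is an attack on an open problem, and you yourself recognise in the last paragraph that the argument is incomplete.

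That said, parts of your outline are redundant with results already available, and one step is not justified. Ruling out compact components (embedded circles) is unnecessary: by \cite[theorem~1.12]{bibgl}, the coordinate $a$ is unbounded from above on every component of $Constr_\ell$, so no oval can occur. Likewise, escape with $s\to\infty$ while $a$ stays bounded is already excluded by \cite[proposition~4.13]{bibgl}; this is exactly how the paper proves Proposition~\ref{conjs}. Your proposed transversality of $Constr_\ell$ to the lines $\{s=\mathrm{const}\}$ is not established anywhere, and the ``regular landing'' clause of Theorem~\ref{cangerms} concerns only germs at the $s$-axis, so it cannot be ``propagated along the component'' without an additional global argument that you do not supply.

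After stripping these, the genuine remaining obstruction is exactly the one the paper isolates: one must exclude components on which $a\to\infty$ along \emph{both} ends. Your suggestion to push points back toward $\{a=0\}$ by backward iterates of the Poincar\'e map $\mcp$ is natural, but it presupposes that $\mcp^{-1}$ is defined along the whole component and actually decreases $a$ into the region covered by Theorem~\ref{tcgerm}. The paper establishes $\mcp$ only on a neighbourhood of $\{a=0\}$, and controlling the pole structure of the associated Painlev\'e~3 solutions globally---which is what such an extension requires---is precisely the unresolved difficulty flagged in Subsection~1.5. In short, you have correctly located the crux, but the proposal does not close the gap; the conjecture remains open.
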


\begin{conjecture} \label{conj3d} \cite[conjecture 6.8]{bibgl} Each constriction curve is bijectively analytically projected onto the $a$-axis $(\rr_+)_a$ (or equivalently, onto $(\rr_+)_\omega$). 
The corresponding conjectural arrangement of constriction curves and components of the phase-lock areas $L_\ell$ is presented at Fig. \ref{figc}a).  
\end{conjecture}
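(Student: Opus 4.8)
The plan is to prove the conjecture in the equivalent coordinates $(A,\omega)$, in which $a=\omega^{-1}$ and $s=A/\omega$, so that the projection onto the $a$-axis is the projection $(A,\omega)\mapsto\omega$ onto $(\rr_+)_\omega$ (this is why the two projections in the statement are equivalent). In these coordinates each constriction curve is an off-axis ($A\neq0$) branch of the zero locus of the single analytic function $F_\ell:=g_{\ell,0}-g_{\ell,\pi}$, where $g_{\ell,0},g_{\ell,\pi}$ are the analytic boundary functions of $L_\ell$: a constriction is exactly a pinch $g_{\ell,0}=g_{\ell,\pi}$ of the two boundary components, which by the already recalled fact that all constrictions lie on $\Lambda_\ell=\{B=\ell\omega\}$ is automatically located on that line. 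The projection $\mcc_{\ell,k}\to(\rr_+)_\omega$ is then a diffeomorphism precisely when (A) $\partial_A F_\ell\neq0$ everywhere along $\mcc_{\ell,k}$, so that by the implicit function theorem the branch is a graph $A=A_{\ell,k}(\omega)$ with no vertical tangent (no fold), and (B) this projection is proper, i.e.\ over each compact $[\omega_1,\omega_2]$ the branch stays in a compact part of $\{A>0\}$. Granting (A) and (B), the projection is a proper submersion of a connected curve onto the connected, simply connected base $(\rr_+)_\omega$, hence a covering of degree one, i.e.\ a bijection.

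I would control the two ends and the a~priori bounds (B) using the material already at hand. At the end $a\to0$ ($\omega\to\infty$) Theorem~\ref{cangerms} is decisive: $\mcc_{\ell,k}$ has the single limit point $\beta_{\ell,k}=(0,s_{\ell,k})$ and lands there regularly, which anchors the branch and gives (A) near that end, since there $A/\omega\to s_{\ell,k}$ and, by the Bessel asymptotics (\ref{bessas}), $F_\ell=-2J_\ell(-A/\omega)+O(\tfrac{\ln A}{A})$, whose $A$-derivative $\tfrac2\omega J_\ell'(-A/\omega)+\dots$ is nonzero at the simple zero $s_{\ell,k}$. The same asymptotics confine, for large $A$, the locus $\{F_\ell=0\}$ to thin neighbourhoods of the lines $A=\omega s_{\ell,m}$, so the $k$-th branch tracks $s\approx s_{\ell,k}$; this should yield (B) over compact $\omega$-intervals and rule out escape to $A=\infty$ at finite $\omega$, while the off-axis condition $A\neq0$ together with the explicit growth-point curve $\{A=0,\,B=\sqrt{\ell^2\omega^2+1}\}$ keeps the branch away from $A=0$. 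The remaining end $a\to\infty$ ($\omega\to0$) I would examine by the adiabatic rescaling of (\ref{josvec}).

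The crux is (A) in the intermediate range, where $A$ and $\omega$ are both of order one and the Bessel asymptotics give no information: there one must exclude a zero of $\partial_A F_\ell$, i.e.\ a fold. The natural strategy is a monotonicity argument producing a quantity that is strictly monotone along $\mcc_{\ell,k}$ with level sets the fibres $\{\omega=\mathrm{const}\}$, equivalently a proof that $\partial_A F_\ell$ keeps a constant nonzero sign along the whole branch. The paper's own self-similarity furnishes the natural candidate: the Poincar\'e map of the dynamical isomonodromic foliation preserves the rotation number and realizes the shift $(\mcc_{\ell,k},\beta_{\ell,k})\mapsto(\mcc_{\ell,k+1},\beta_{\ell,k+1})$, so a globally defined ``isomonodromic time'' transverse to the fibres $\{\omega=\mathrm{const}\}$ would at once provide the required monotone parameter and forbid folds.

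The main obstacle is globalizing this monotonicity. At present the isomonodromic Poincar\'e map is controlled only on a neighbourhood of $\{a=0\}$, so the self-similarity, and any monotone parameter extracted from it, are available only near that boundary; away from $a=0$ there is no known a~priori sign for $\partial_A F_\ell$, and no soft argument yet excludes a fold. The decisive step, and the reason the statement remains a conjecture, is therefore to extend the isomonodromic Poincar\'e map — or to construct an independent strictly monotone quantity along $\mcc_{\ell,k}$ — from a neighbourhood of $\{a=0\}$ to the entire region $\omega\in(0,\infty)$. I expect this global step, rather than either end analysis or the a~priori bounds (B), to be the genuinely hard part.
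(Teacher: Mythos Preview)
The statement you are addressing is Conjecture~\ref{conj3d}; it is explicitly stated as an \emph{open} conjecture in the paper (imported from \cite[conjecture 6.8]{bibgl}) and is \emph{not} proved anywhere in the paper. There is therefore no ``paper's own proof'' to compare your proposal against. What the paper does prove in this direction is much weaker: Theorem~\ref{cangerms} gives the regular landing of each $\mcc_{\ell,k}$ at $\beta_{\ell,k}$ and surjectivity of the projection to the $a$-axis, and Proposition~\ref{conjs} shows that Conjecture~\ref{conj3d} implies Conjecture~\ref{conjeach}. Nothing more.

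Your text is not a proof, and to your credit you say so: you isolate the essential obstruction --- the sign of $\partial_A F_\ell$ (equivalently, absence of folds) in the intermediate range where neither the Bessel asymptotics nor the local analysis near $\{a=0\}$ applies --- and you note that the isomonodromic Poincar\'e map, the only tool in the paper that could conceivably produce a global monotone parameter, is at present controlled only on a neighborhood of $\{a=0\}$ (Theorem~\ref{tcgerm}). That diagnosis is accurate and matches the paper's own assessment; this is precisely why the statement remains a conjecture.

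Two technical remarks on your outline. First, the conjecture concerns \emph{all} constriction curves, not only the $\mcc_{\ell,k}$; your anchoring argument at $\beta_{\ell,k}$ presupposes that there are no queer curves, which is itself part of what must be shown (Conjecture~\ref{conjeach}). The paper derives Conjecture~\ref{conjeach} from Conjecture~\ref{conj3d}, not the other way around, so a proof along your lines must separately exclude queer curves or argue directly that any branch of $\{F_\ell=0,\ A>0\}$ projects bijectively. Second, the end $a\to\infty$ (i.e.\ $\omega\to0^+$) is not merely a matter of ``adiabatic rescaling'': the paper's own results (\cite[theorem 1.12]{bibgl}, cited in the proof of Lemma~\ref{lacreg}) only give that $a$ is unbounded along each component, and control of the curve as $\omega\to0^+$ is not available in the paper. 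So both the intermediate-range fold problem and the $\omega\to0^+$ end are genuine gaps, not just the former.
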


\begin{definition} A constriction curve different from $\mcc_{\ell,k}$ (if any) is called a {\it queer constriction curve.} 
\end{definition}

We prove Proposition \ref{conjs} (Subsection 1.2), which states that a constriction curve is queer, if and only if 
the restriction to it of the coordinate $a$ is unbounded on both its sides. This will show that  Conjecture \ref{conj3d} would imply Conjecture \ref{conjeach}. 

We deduce Corollary \ref{cqueer} (of Proposition \ref{conjs}), which states that for every queer constriction curve $\mcc$ in $L_\ell$ (if any) there exists a connected component of 
the interior  $Int(L_\ell)$ that is adjacent to $\mcc$ and is adjancent to no curve $\mcc_{\ell,k}$. 
This shows that {\it Conjecture \ref{conjeach} is equivalent to the statement saying that each 
component of $Int(L_r)$ is adjacent to some $\mcc_{\ell,k}$. }

\begin{remark} Conjecture \ref{conj3d} implies that as $\omega$ varies, in the corresponding  planar phase-lock areas $L_r(\omega)$  constrictions can neither be born, nor disappear, as $\omega$ 
crosses a value $\omega_0$ for which the boundary curves of $L_r(\omega_0)$ are tangent to each other at a constriction. Conjectural picture of the three-dimensional phase-lock areas 
and conjecturally impossible picture, with a queer constriction curve and the corresponding 
component of $Int(L_r)$ (called a {\it queer component}), are presented at Fig. 4a) and 4b) respectively. 
\end{remark}
\begin{figure}[ht]
  \begin{center}
   \epsfig{file=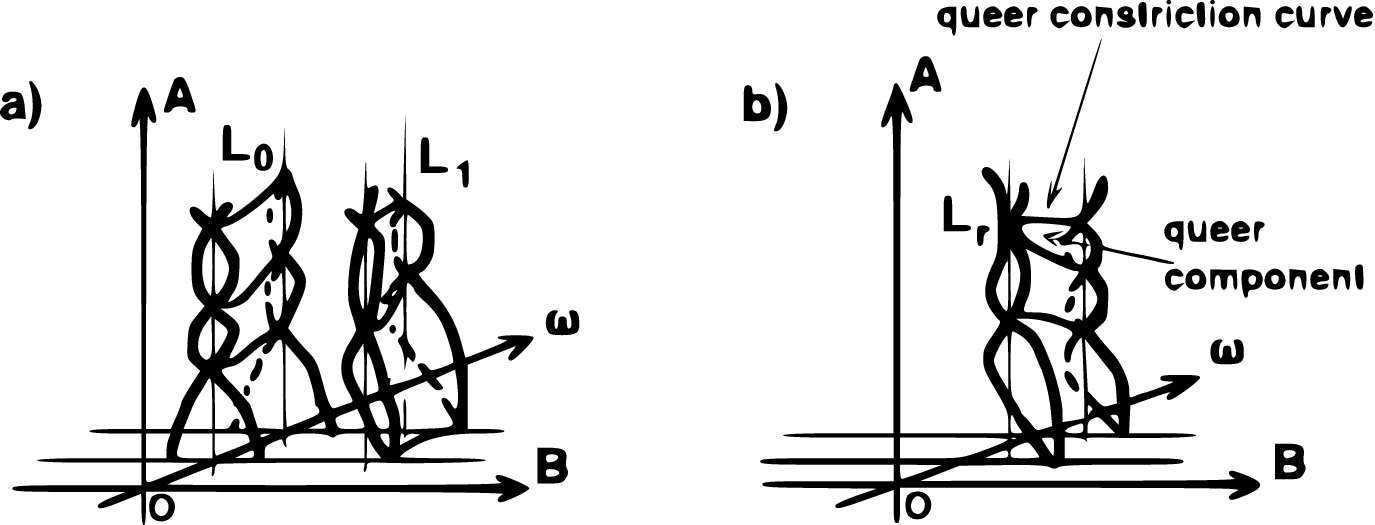, width=33em}
    \caption{Three-dimensional phase-lock areas and constriction curves: \newline a) the conjectural picture, without queer constriction curves; \newline b) a priori possible picture with a queer curve and a queer component.}\label{figc}
  \end{center}
\end{figure}

Each three-dimensional phase-lock area has a kind of {\it self-similarity structure:} any two its components lying high enough look similarly. It would be 
interesting to find and study a self-similarity map of the parameter space that sends one component to the next one, adjacent to it from above.  In his paper with Yu.Bibilo, 
see \cite[subsection 6.2]{bibgl}, the author suggested a candidate to such 
a self-similarity map: the {\it Poincar\'e map of the so-called dynamical isomonodromic foliation 
$\mcg$.} 
Namely, he introduced the following four-dimensional extension  of the three-dimensional family (\ref{josvec})  of dynamical systems 
on torus modeling overdamped Josephson junction:
\begin{equation}  \frac{d\theta}{d\tau}=\nu+a\cos\theta+s\cos\tau+\psi\cos(\theta-\tau); \ \ \ 
\nu,a,\psi\in\rr, \ \ s>0, \ (a,\psi)\neq(0,0).\label{gen3}\end{equation}
Family (\ref{jostor}) embeds to (\ref{gen3}) by the mapping 
\begin{equation}(\ell, s,\omega)\mapsto(\nu, a, s, \psi)=(\ell, \omega^{-1}, s, 0).\label{paremb}\end{equation}
 In the new coordinates $(\ell, \chi, a,  s)$ on the parameter space of family (\ref{gen3}), 
$$\chi=\frac{\psi}{2s}, \ \ell:=\nu-\frac{\psi a}{s}=\nu-2\chi a,$$ 
consider the line field given by the following system of non-autonomous differential equations introduced in \cite[formula (6.4)]{bibgl}:
  \begin{equation}\begin{cases}  \ell'_s=0\\ 
   \chi'_s=\frac{a-2\chi(\ell+2\chi a)}{2s}\\
  a'_s=-2s\chi+\frac as(\ell+2\chi a)\\
 \end{cases}.\label{isomnews}\end{equation} 
The  foliation of the four-dimensional parameter space by its phase curves, which are graphs of solutions of system (\ref{isomnews}), is called the {\it dynamical isomonodromic foliation} 
and denoted by $\mcg$. (This name comes from relation to isomonodromic families of linear systems; see explanation below, in Remark \ref{rkisom}). The author has shown in \cite[subsection 6.2]{bibgl} that each its leaf  is a family of flows on $\tt^2$ that are conjugated to each other by diffeomorphisms isotopic to the identity. Thus, {\it the rotation number and the number $\ell$ are constant along each leaf of the foliation $\mcg$.}   For every $\ell\in\rr$ the function 
  \begin{equation}w(s):=\frac{a(s)}{2s\chi(s)}=\frac{a(s)}{\psi(s)}\label{wsnew}\end{equation}
  satisfies Painlev\'e 3 equation 
  \begin{equation} w''=\frac{(w')^2}w-\frac{w'}{s}-2\ell\frac{w^2}{s}+(2\ell-2)\frac1s+w^3-\frac1w.\label{p3}\end{equation}
   along solutions of (\ref{isomnews}), see \cite[theorem 6.6]{bibgl}. 
   
   Consider family (\ref{jostor}) modeling Josephson junction as the subfamily in  
   (\ref{gen3}): the hyperplane  $\{\chi=0\}$, see (\ref{paremb}). A solution $(\ell, \chi(s), a(s))$ 
   with initial condition $(\ell, 0,a_0)$ in the latter hyperplane  at $s=s_0$ may return back to 
   the hyperplane at its another point $(\ell, 0, a_1)$ at some next moment $s_1>s_0$. If it happens, then 
   this induces the {\it Poincar\'e first return map} $\mcp:(\ell,a_0,s_0)\mapsto(\ell,a_1,s_1)$ 
   defined on some subset of the hyperplane $\{\chi=0\}$.  
   
   \begin{problem} (see \cite[problems 6.11--6.13]{bibgl}) Study the action of the 
Poincar\'e map $\mcp$ on the three-dimensional phase-lock area portrait of family (\ref{jostor}). Is it true that 
it sends each interior component of every phase-lock area (starting from the second component) 
 to its next component, adjacent to 
it from above? Is it true that it sends each constriction curve $\mcc_{\ell,k}$ to  
$\mcc_{\ell,k+1}$?
\end{problem}
\begin{problem}
Describe the subset  where the Poincar\'e map $\mcp$ is well-defined. Describe 
the subset where all its iterates are well-defined. 
\end{problem}

\begin{remark}
The second part of the latter problem is closely related to the description of those solutions of 
Painlev\'e 3 equation (\ref{p3}) that have an infinite lattice of residue one poles in $\rr_+$. 
See Subsection  1.5, where  relation to poles of solutions of Painlev\'e 3 equation 
will be presented together with a brief survey of results on poles.
\end{remark}

Our second main result  is Theorem \ref{tcgerm} (Subsection 1.3). It states that 

- the Poincar\'e 
map $\mcp$ is well-defined on a neighborhood of the plane $\{ a=0\}\subset\rr_{\ell}\times(\rr^2_+)_{a,s}$; 

- its restriction to the latter plane sends a point $(\ell,0,s)$ to $(\ell,0,s^*)$, $s^*>0$, where $s$, $s^*$ are neighbor zeros of a solution of the $\ell$-th Bessel equation. 

To study the domain of the  map $\mcp$, it is important to study analytic extensions 
of solutions of system (\ref{isomnews}) and of the Painlev\'e 3 equation (\ref{p3}). 
\begin{remark}
It is well-known that solutions $w(s)$ of Painlev\'e 3 equation (\ref{p3}) are  meromorphic on the universal cover of the punctured line $\cc^*_s=\cc\setminus\{0\}$ (classical result coming from the Painlev\'e property), and {\it all their poles are simple with residues $\pm1$} 
\cite[theorem 31.1]{GLS}. But a generic solution of (\ref{p3}) has a nontrivial monodromy and 
is not single-valued on $\cc^*$. 
\end{remark}

Our third main result is Theorem \ref{thext} (Subsection 1.4) classifying singularities 
$s_0\in\cc_s$ of 
solutions $(\chi(s), a(s))$ of system (\ref{isomnews}). It states that each solution is meromorphic on the universal cover over $\cc^*_s$, and each its pole $s_0$  is 
either a zero of the corresponding solution $w(s)$ of the Painlev\'e 3 equation (\ref{p3}) with unit 
derivative, or its pole with residue -1. Theorem \ref{thext} also states that for each initial condition 
lying in the constriction submanifold $Constr_\ell$ the corresponding solution $(\chi(s),a(s))$ is meromorphic either on $\cc^*$, or on its double cover, and $w(s)$ is meromorphic on $\cc^*$. 
In the case, when the initial condition lies on a constriction curve $\mcc_{\ell,k}$, Theorem \ref{thext} states that the  solution $(\chi(s),a(s))$ is meromorphic single-valued on $\cc^*_s$. 

Let us note the Hamiltonian nature of non-autonomous system (\ref{isomnews}). 

 \begin{proposition} For every fixed $\ell\in\rr$ the corresponding differential equation 
  (\ref{isomnews}) on vector function $(\chi(s),a(s))$ is Hamiltonian with 
  time $s$-depending Hamiltonian function 
  \begin{equation} H(\chi,a,s):=-\frac{\chi^2a^2}{s}+\frac{a^2}{4s}+s\chi^2-\frac{\ell\chi a}s.
  \label{hamilt}\end{equation}
  In particular, for every positive $s_1$, $s_2$, $s_1<s_2$, 
  the non-autonomous flow map of equation (\ref{isomnews}) from time $s_1$ to time $s_2$ 
  (on a domain in $\cc^2_{\chi,a}$ where it is well-defined) preserves the standard symplectic form $d\chi\wedge da$.
  \end{proposition}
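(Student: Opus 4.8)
The plan is to verify directly that system (\ref{isomnews}) is the Hamiltonian system associated to the function $H(\chi,a,s)$ in (\ref{hamilt}), and then invoke the standard fact that a (possibly non-autonomous) Hamiltonian flow preserves the canonical symplectic form. Concretely, I would treat $(\chi,a)$ as the conjugate pair of canonical coordinates, with $\chi$ playing the role of ``position'' and $a$ the role of ``momentum'' (or vice versa), and check that the prescribed equations match Hamilton's equations. For this choice the claim to be verified is
\begin{equation}
\chi'_s=\frac{\partial H}{\partial a},\qquad a'_s=-\frac{\partial H}{\partial \chi}.
\label{hameqs}
\end{equation}

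The core of the proof is then a direct computation of the two partial derivatives of $H$ from (\ref{hamilt}) and a comparison with the right-hand sides of (\ref{isomnews}). Differentiating $H=-\frac{\chi^2a^2}{s}+\frac{a^2}{4s}+s\chi^2-\frac{\ell\chi a}{s}$ in $a$ gives $\frac{\partial H}{\partial a}=-\frac{2\chi^2 a}{s}+\frac{a}{2s}-\frac{\ell\chi}{s}=\frac{a-2\chi(\ell+2\chi a)}{2s}$, which is exactly $\chi'_s$; differentiating in $\chi$ gives $\frac{\partial H}{\partial \chi}=-\frac{2\chi a^2}{s}+2s\chi-\frac{\ell a}{s}$, so $-\frac{\partial H}{\partial\chi}=\frac{2\chi a^2}{s}-2s\chi+\frac{\ell a}{s}=-2s\chi+\frac{a}{s}(\ell+2\chi a)$, which is exactly $a'_s$. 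Thus (\ref{hameqs}) holds, confirming that (\ref{isomnews}) is Hamiltonian with the stated Hamiltonian. I would note that the sign convention is fixed precisely so that these two identities come out right; the only genuine content is the algebraic regrouping $\frac{a}{2s}-\frac{2\chi^2a}{s}-\frac{\ell\chi}{s}=\frac{a-2\chi(\ell+2\chi a)}{2s}$, which one checks by expanding $2\chi(\ell+2\chi a)=2\ell\chi+4\chi^2 a$.

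The second assertion, preservation of $d\chi\wedge da$ by the non-autonomous flow map from time $s_1$ to time $s_2$, then follows from the general principle that the time-$s$ flow of a Hamiltonian vector field is a symplectomorphism for each fixed pair of times. The cleanest justification is Liouville's argument: the vector field on the right-hand side of (\ref{isomnews}), regarded as a (time-dependent) vector field $(\chi'_s,a'_s)=(\partial_a H,-\partial_\chi H)$ in the $(\chi,a)$-plane, has divergence $\partial_\chi(\partial_a H)+\partial_a(-\partial_\chi H)=\partial^2_{\chi a}H-\partial^2_{a\chi}H=0$ by equality of mixed partials, so the flow is area-preserving; equivalently, the Lie derivative of $d\chi\wedge da$ along the flow vanishes because the flow is Hamiltonian, whence the flow map pulls $d\chi\wedge da$ back to itself. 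In the complexified setting $\cc^2_{\chi,a}$ the same computation applies verbatim since $H$ is a rational function and the identity $\partial^2_{\chi a}H=\partial^2_{a\chi}H$ is formal.

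I do not anticipate a serious obstacle here: the statement is essentially a verification, and the only points requiring a word of care are the sign/labeling convention that designates which of $\chi,a$ is the canonical coordinate and which the conjugate momentum, and the remark that the flow map is defined only on the subdomain of $\cc^2_{\chi,a}$ on which the solution of (\ref{isomnews}) extends holomorphically from $s_1$ to $s_2$ without encountering a pole—this is exactly the proviso ``on a domain where it is well-defined'' in the statement, and on any such domain the mixed-partials cancellation and Liouville's argument go through unchanged.
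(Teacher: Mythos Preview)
Your proposal is correct and follows exactly the approach the paper indicates: the paper states simply that ``the proposition follows by straightforward calculation,'' and you have carried out precisely that calculation, verifying $\chi'_s=\partial_a H$ and $a'_s=-\partial_\chi H$ and then invoking the standard divergence-free/Liouville argument for symplectic preservation. There is nothing to add.
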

  The proposition follows by straightforward calculation.
  \begin{remark} Representations of Painlev\'e  equations as a Hamiltonian systems were found by K.Okamoto \cite[p. 742]{ok}. For Painlev\'e 3, the Okamoto Hamiltonian is a quartic polynomial, 
  with the only quartic monomial being product of two squares, as in (\ref{hamilt}). But in difference to (\ref{hamilt}), it also contains  cubic and linear terms. 
  \end{remark}

     Family of dynamical systems (\ref{josvec}) modeling Josephson junction 
   can be equivalently described by a family of two-dimensional 
   linear systems of differential equations on 
   the Riemann sphere, see \cite{bkt1, buch2, bt1, Foote, LSh2009, IRF}, 
   \cite[subsection 3.2]{bg}. This is also true for the extended family (\ref{gen3}). 
   Namely, in the complex variables 
 $$\Phi=e^{i\theta}, \ \ z=e^{i\tau}$$ 
 equations (\ref{gen3}) can be equivalently written as Riccati equations 
\begin{equation}\frac{d\Phi}{dz}=\frac1{z^2}\left(\frac s2\Phi+\frac\psi2\Phi^2\right)+\frac1z\left(\nu\Phi+\frac a2(\Phi^2+1)\right) 
+\left(\frac s2\Phi+\frac\psi2\right).\label{ricg}\end{equation}
A function $\Phi(z)$ is a solution of the latter Riccati equation, if and only if 
$\Phi(z)=\frac{Y_2(z)}{Y_1(z)}$, where $Y=(Y_1,Y_2)(z)$ is a solution of the linear system 
\begin{equation} Y'=\left(-s\frac{\mathbf K}{z^2}+\frac{\mathbf R}z+s\mathbf N\right)Y,\label{mchoy}\end{equation}
$$\mathbf K=\left(\begin{matrix}\frac12 & \chi \\ 0 & 0\end{matrix}\right), \ 
  \mathbf R=\left(\begin{matrix}-(\ell+\chi a) & -\frac{a}2\\ \frac{a}2 & \chi a\end{matrix}\right),  \ 
  \mathbf N=\left(\begin{matrix}-\frac12 & 0 \\ \chi  & 0\end{matrix}\right); \ \ \ 
   \ \chi=\frac\psi{2s}.
  $$
  
 Given a $z_0\in\cc^*=\cc\setminus\{0\}$, say, $z_0=1$, the space of germs of solutions of 
 linear system (\ref{mchoy}) at $z_0$ is identified with the vector space $\cc^2$ of initial 
 conditions at $z_0$. Analytic extension along a counterclockwise circuit around the origin 
 is a linear operator acting on the latter local solution space, called the {\it monodromy operator}.  
   
 \begin{remark} \label{rkisom}
  Solutions of (\ref{gen3}) correspond to isomonodromic families of linear systems 
 (\ref{mchoy})  \cite[theorem 6.6]{bibgl}. (This explains the name "isomonodromic foliation" for the foliation by graphs of solutions of (\ref{isomnews}).)   The latter isomonodromic families are induced from classical Jimbo isomonodromic deformations given in \cite{J}, governed by Painlev\'e 3 equation \cite[pp. 1156--1157]{J}, which implies equation (\ref{p3}). It is known that {\it linear 
   systems (\ref{mchoy}) corresponding to constrictions in subfamily (\ref{jostor}) of 
   (\ref{gen3}) have trivial monodromy,} see \cite[proposition 4.1]{bibgl}. 
   For a necessary background on linear systems with irregular singularities  and isomonodromic families see, for example, \cite[sections 2 and 3.1]{bibgl} and Subsection 2.7 below.    \end{remark}
   
   The plan of proofs of main results is presented in Subsection 1.6.

\subsection{The constriction manifold: limit points and landing components}

 \begin{theorem} \label{cangerms} 1) The limit points of the constriction submanifold 
 $Constr_\ell\subset(\rr_+^2)_{a,s}$ are the points 
 $$\beta_{\ell,k}=(0,s_{\ell,k}), \ \ s_{\ell,k}>0 \ \text{ is the } \ k-\text{th positive zero of the Bessel function}$$
 \begin{equation}  J_\ell(s)=\frac1\pi\int_0^\pi\cos(\ell\tau-s\sin\tau)d\tau.\label{besself}\end{equation}

2) For every $\beta_{\ell,k}$ there exists a unique connected component $\mcc_{\ell,k}$ of the manifold $Constr_\ell$   
landing  at $\beta_{\ell,k}$, and $\mcc_{\ell,k}$ accumulates  to no other $\beta_{\ell,j}$.

 3) The union of the closure $\overline{Constr_\ell}$ in 
$\rr_{\geq0}\times\rr_s$ and its  image under the symmetry with respect to the $s$-axis is  
a regular connected analytic 
submanifold  $\widehat{Constr}_\ell\subset\rr^2_{a,s}$. It intersects the $s$-axis  
orthogonally at points  $\beta_{\ell,k}$. 

4) The image of each curve $\mcc_{\ell,k}$ under the projection to the $a$-coordinate is the whole $a$-semiaxis $\rr_+$.
\end{theorem}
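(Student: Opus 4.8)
The plan is to detect constrictions through the monodromy of the linear system (\ref{mchoy}) and to extract the Bessel function from its behaviour as $a\to0$. By Remark \ref{rkisom} and \cite{bibgl}, a point of the Josephson slice $\{\chi=0\}$ is a constriction precisely when the monodromy $M=M(a,s)$ of (\ref{mchoy}) around $z=0$ is trivial; for fixed $\ell\in\zz$ this $M$ depends analytically on $(a,s)$, lies in $\sl_2(\cc)$ (one checks $\det M=e^{-2\pi i\ell}=1$), and $M(0,s)=I$ since at $a=0$ the system splits. The first step is to expand $M$ near $a=0$. At $a=0$ the matrices $\mathbf K,\mathbf R,\mathbf N$ are diagonal and a fundamental matrix is $\Phi_0(z)=\diag\!\big(z^{-\ell}e^{\frac{s}{2}(1/z-z)},\,1\big)$; the only $a$–dependence is the off–diagonal part of $\mathbf R$, equal to $\frac{a}{2z}\left(\begin{smallmatrix}0&-1\\1&0\end{smallmatrix}\right)$. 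Variation of parameters then gives $M(a,s)=I+a\,M_1(s)+O(a^2)$, uniformly on compact $s$–sets, where the two off–diagonal entries of $M_1(s)$ are the contour integrals $\tfrac12\oint z^{\pm\ell-1}e^{\pm\frac{s}{2}(z-1/z)}\,dz$. By the generating identity $e^{\frac{s}{2}(z-1/z)}=\sum_n J_n(s)z^n$ — equivalently the representation (\ref{besself}) — each of these equals a nonzero constant times $J_\ell(s)$; thus $M_1(s)=c\,J_\ell(s)\left(\begin{smallmatrix}0&-1\\1&0\end{smallmatrix}\right)$ with $c\neq0$, and on compacta the condition $M=I$ forces $J_\ell(s)=O(a)$. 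To leading order the constriction condition is exactly $J_\ell(s)=0$.

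For Part 1 I would observe that a finite limit point of $Constr_\ell$ in $\rr_{\geq0}\times\rr_s$ must lie on $\{a=0\}$: interior points of the quadrant already belong to the analytic manifold $Constr_\ell$, accumulation on the $a$–axis $\{s=0\}$ is excluded by \cite[Lemma 4.14]{bibgl}, and the corner likewise. For a limit point $(0,s_*)$, $s_*>0$, choose constrictions $(a_n,s_n)\to(0,s_*)$; dividing $M(a_n,s_n)-I=a_nM_1(s_n)+O(a_n^2)=0$ by $a_n$ and passing to the limit gives $M_1(s_*)=0$, i.e. $J_\ell(s_*)=0$, so $s_*=s_{\ell,k}$. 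Parts 2 and 3 then follow from the implicit function theorem together with a symmetry. Conjugating (\ref{mchoy}) by $\diag(1,-1)$ negates the off–diagonal of $\mathbf R$ while fixing $\mathbf K,\mathbf N$, so the system at $-a$ is conjugate to that at $a$ and $M_{21}(-a,s)=-M_{21}(a,s)$. Hence the analytic function $F:=M_{21}$ is odd in $a$, and $F=a\,\tilde F$ with $\tilde F$ even in $a$ and $\tilde F(0,s)=c\,J_\ell(s)$. Since $s_{\ell,k}$ is a simple zero of $J_\ell$, one has $\partial_s\tilde F(\beta_{\ell,k})\neq0$, so $\{\tilde F=0\}$ is near $\beta_{\ell,k}$ a single analytic graph $s=s(a)$ which, being even, satisfies $s'(0)=0$. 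Because $Constr_\ell\subset\{F=0\}$ (triviality of $M$ forces $M_{21}=0$) and $Constr_\ell$ is a priori one–dimensional analytic, this graph coincides locally with the unique component $\mcc_{\ell,k}$ landing at $\beta_{\ell,k}$; its horizontal tangent is orthogonal to the $s$–axis, and as $\{\tilde F=0\}$ is $a\mapsto-a$ symmetric it is already the doubled manifold $\widehat{Constr}_\ell$, regular analytic across $\{a=0\}$.

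For Part 4 I would show $a$ is unbounded on the connected curve $\mcc_{\ell,k}$, whose image under $a$ is an interval containing values tending to $0$. Suppose $a\leq a_*$ on $\mcc_{\ell,k}$; then $\omega=1/a$ is bounded below, and by the Bessel asymptotics (\ref{bessas}) the constrictions with large $A/\omega=s$ cluster within $O(\ln s/\sqrt s)$ of the zeros $s_{\ell,m}$, so the constriction locus splits near large $s$ into one cluster per $s_{\ell,m}$. A connected curve with bounded $a$ cannot pass between clusters, hence cannot have $s\to\infty$; thus $\mcc_{\ell,k}$ would be relatively compact and both its ends would limit, by Part 1, to points of $\{\beta_{\ell,j}\}$, contradicting the single–branch local uniqueness just established. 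Therefore $\sup a=\infty$, and the image — an interval with infimum $0$ and supremum $\infty$ — is all of $\rr_+$.

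I expect the main difficulty to be two uniformity statements. The first is promoting the pointwise expansion to the assertion that, near $\{a=0\}$, triviality of the full monodromy is governed by the single analytic function $\tilde F$ with $\tilde F(0,s)=c\,J_\ell(s)$: this requires identifying exactly which connection/Stokes coefficient of (\ref{mchoy}) — whose two irregular singularities at $0$ and $\infty$ interact — controls $M$, and checking that the remaining triviality conditions are dependent on $\tilde F$ through the symmetries of the problem. The second is the global control in Parts 1 and 4, namely the exclusion of escape with $s\to\infty$ at bounded $a$, which needs the asymptotics (\ref{bessas}) uniformly for $\omega$ bounded below, in the spirit of the estimates on pp. 5459--5460 of \cite{bibgl}.
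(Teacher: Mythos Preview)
Your approach via the monodromy $M(a,s)$ of the linear system (\ref{mchoy}) runs parallel to the paper's, which works instead with the time-$2\pi$ Poincar\'e map $h_{\ell,a,s}$ of the torus flow (Proposition~\ref{propoinc}); the two are related by projectivization, and the paper's variation-of-constants computation (Propositions~\ref{necess} and~\ref{propneq}) produces the same Bessel function you extract from the generating series. Your oddness-in-$a$ argument for $M_{21}$ plays the role of the paper's function $\xi(a,s;\theta_0):=a^{-1}(h(\theta_0)-\theta_0)$: both give a single analytic equation whose zero set contains $Constr_\ell$ and is regular at each $\beta_{\ell,k}$ by simplicity of Bessel zeros.

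However, there is a genuine missing step. You establish only that finite limit points of $Constr_\ell$ are \emph{among} the $\beta_{\ell,k}$, and that through each $\beta_{\ell,k}$ passes a regular curve $\{\tilde F=0\}$ \emph{containing} $Constr_\ell$. But $\tilde F=0$ is only $M_{21}=0$, not $M=I$; the containment goes the wrong way, so the curve could a priori lie entirely outside $Constr_\ell$. Your sentence ``this graph coincides locally with the unique component $\mcc_{\ell,k}$ landing at $\beta_{\ell,k}$'' presupposes that some component lands there --- precisely the existence claim at issue. The paper supplies this existence by two substantial further ingredients that do not appear in your proposal: first, the Klimenko--Romaskevich uniform Bessel asymptotics (\ref{bbo})--(\ref{bbo2}) force $Constr_\ell$ to accumulate to at least one $\beta_{\ell,m}$ (Proposition~\ref{pkr}); second, the Poincar\'e map $\mcp$ of the dynamical isomonodromic foliation, which sends $\beta_{\ell,k}\mapsto\beta_{\ell,k+1}$ and preserves the constriction property, propagates this to all $k$ (Proposition~\ref{posled}). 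Your Part~4 argument also does not close: even granting your clustering claim, the conclusion ``both ends limit to points of $\{\beta_{\ell,j}\}$, contradicting single-branch local uniqueness'' is not a contradiction --- a curve with two ends at two distinct $\beta_{\ell,j}$'s is consistent with one branch at each. The paper instead quotes \cite[theorem~1.12]{bibgl} for unboundedness of $a$ on every component, and deduces Statements~3) and~4) from that.
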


\begin{proposition} \label{conjs} Let $\mcc\subset Constr_\ell$ be a constriction curve. 
Let the restriction to $\mcc$ of the coordinate function $a$ be  bounded on at least one its side: we split $\mcc$ into two semi-curves separated by a point, and we require that $a$ be bounded on at least one of them.  Then $\mcc$ coincides with some of 
$\mcc_{\ell,k}$.
\end{proposition}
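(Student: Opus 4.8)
The plan is to study the end of $\mcc$ on the side where $a$ stays bounded, through its cluster set, and to reduce matters to a clean dichotomy. First I would record that $Constr_\ell$ is a properly embedded analytic $1$-submanifold of $(\rr_+^2)$: it is closed there because, by Theorem~\ref{cangerms}(1), all of its limit points are the $\beta_{\ell,k}=(0,s_{\ell,k})$, which lie off the open quadrant. Hence each component is a line or a circle; the hypothesis (removing a point splits $\mcc$ into two semi-curves) forces the line case, so I take $\mcc\cong\rr$ and write $\mcc^-$ for the semi-curve on which $a\le M$. A compact component would itself be a queer curve with $a$ bounded, so excluding such components is part of the assertion; I expect them not to occur, but this needs a separate argument and I set it aside. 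I would then compactify the quadrant to $Q=[0,+\infty]_a\times[0,+\infty]_s$ and let $\Lambda\subset Q$ be the cluster set of the end of $\mcc^-$.

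Since this end leaves every compact subset of the open quadrant, $\Lambda\subset\partial Q$; as $a\le M$ it misses $\{a=+\infty\}$, by the quoted fact that $Constr_\ell$ does not accumulate on the $a$-axis it misses $\{s=0\}$, and by Theorem~\ref{cangerms}(1) its finite points on $\{a=0\}$ lie among the $\beta_{\ell,k}$. Being the cluster set of a connected end, $\Lambda$ is connected (a nested intersection of compact connected sets), so it is either a single point $\beta_{\ell,k}$ or a connected subset of the top edge $\{s=+\infty\}$. In the first case the end converges to $\beta_{\ell,k}$; by Theorem~\ref{cangerms}(3) the curve $\widehat{Constr}_\ell$ is regular and meets the $s$-axis orthogonally at $\beta_{\ell,k}$, so near $\beta_{\ell,k}$ the set $Constr_\ell$ is a single embedded arc, and any component accumulating there must coincide with the unique landing component $\mcc_{\ell,k}$ of Theorem~\ref{cangerms}(2). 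Thus $\mcc=\mcc_{\ell,k}$ and we are done.

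It remains to exclude $\Lambda\subset\{s=+\infty\}$, i.e. an end with $s\to+\infty$ and $a\le M$; \textbf{this is the crux.} If $\mcc$ is already some $\mcc_{\ell,k}$ there is nothing to prove, so assume $\mcc$ is queer, whence it is disjoint from every $\mcc_{\ell,k}$. I would pass to the original parameters, where $a\le M$ and $s\to+\infty$ mean $\omega$ bounded below and $A\to+\infty$ -- the regime of the Bessel asymptotics (\ref{bessas}). The plan is to make those asymptotics uniform for $a\in(0,M]$ and to deduce that, for $s$ large, all constrictions with $a\le M$ are confined to thin horizontal bands about the Bessel zeros $s=s_{\ell,k}$, separated by constriction-free gaps, with the constriction locus inside the $k$-th band being exactly the continuation of $\mcc_{\ell,k}$. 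Choosing $p_n\in\mcc^-$ with $s(p_n)\to+\infty$ would then put $p_n$ on some $\mcc_{\ell,k_n}$ for large $n$, contradicting disjointness.

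I expect this confinement to be the main difficulty, because Theorem~\ref{cangerms} controls only the finite limit points on $\{a=0\}$ and is silent about the top edge $\{s=+\infty\}$, so genuinely new quantitative control of constrictions as $s\to+\infty$ with $a$ bounded is needed. A tempting purely topological shortcut is to use that each $\mcc_{\ell,k}$ is a properly embedded line landing at $\beta_{\ell,k}$ and, by Theorem~\ref{cangerms}(4), eventually leaving the strip $\{a\le M\}$, so its landing sub-arc separates that strip while the left endpoints $s_{\ell,k}\to+\infty$; a curve escaping to $s=+\infty$ in the strip ought to cross cofinally many of these arcs, contradicting disjointness. The gap is that, without control of the global shape of the $\mcc_{\ell,k}$ inside the strip, I cannot guarantee that $\mcc^-$ lies on both sides of each separating arc -- closing this gap appears to need the same uniform asymptotics, which is why the $s\to+\infty$ exclusion is the decisive step.
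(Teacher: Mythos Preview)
Your overall architecture matches the paper's: reduce to the dichotomy ``the bounded-$a$ end either accumulates to some $\beta_{\ell,k}$, or $s\to+\infty$'', and in the first case conclude $\mcc=\mcc_{\ell,k}$ via Theorem~\ref{cangerms}. The difference is that you treat the second branch as the open crux and try to build the exclusion from scratch via uniform Bessel asymptotics, whereas the paper simply \emph{cites} it: the case $s\to+\infty$ along $\mcc_-$ with $a$ bounded is declared impossible by \cite[proposition~4.13]{bibgl}. So what you identify as ``the decisive step'' is, in the paper's logic, an imported black box rather than something reproved here. Your sketch of how to obtain it (confinement of constrictions to bands about $s_{\ell,k}$ via the Klimenko--Romaskevich asymptotics, uniform in $a\in(0,M]$) is in the right spirit, but as written it remains a plan, and you yourself flag the gap in the topological shortcut.

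Two smaller points where the paper again leans on \cite{bibgl} and you either set things aside or argue more elaborately: (i) the compact-component worry you postpone is dispatched by \cite[theorem~1.12]{bibgl}, which says $a$ is unbounded on every component of $Constr_\ell$; this also immediately gives that the projection to the $a$-axis is a semi-infinite interval. (ii) You do not need the full cluster-set machinery in $[0,+\infty]^2$: once you know $Constr_\ell$ is closed in $\rr_+^2$ and does not accumulate on the $a$-axis, properness of $\mcc_-$ plus Theorem~\ref{cangerms}(1) already yields the dichotomy. In short, your proof is correct up to the $s\to+\infty$ exclusion; to complete it you should either invoke \cite[proposition~4.13]{bibgl} as the paper does, or actually carry out the uniform-asymptotics argument you outline.
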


\begin{corollary} Conjecture \ref{conj3d} implies Conjecture \ref{conjeach}.
\end{corollary}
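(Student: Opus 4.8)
The plan is to deduce this purely formally from Proposition \ref{conjs}, whose proof already carries all the genuine content; the corollary is then a short logical consequence, and I expect no real obstacle here. The only thing to verify is that the hypothesis of Conjecture \ref{conj3d}---bijective analytic projection onto the $a$-axis---supplies the boundedness hypothesis required by Proposition \ref{conjs}.

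First I would fix an arbitrary constriction curve $\mcc\subset Constr_\ell$ and assume Conjecture \ref{conj3d}, so that the coordinate $a$ restricts to a bijection $a\colon\mcc\to(\rr_+)_a=(0,+\infty)$. As a connected one-dimensional analytic submanifold admitting a continuous bijection onto the interval $(0,+\infty)$, the curve $\mcc$ is diffeomorphic to an interval (a circle, being compact, cannot map bijectively onto the non-compact $(0,+\infty)$), and along $\mcc$ the function $a$ is therefore a monotone homeomorphism onto $(0,+\infty)$.

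Next I would pick any point $p_0\in\mcc$ and split $\mcc$ at $p_0$ into two semi-curves, exactly as in the statement of Proposition \ref{conjs}. By monotonicity of $a$ along $\mcc$, one of these two semi-curves has its $a$-values contained in the bounded interval $(0,a(p_0)]$, so the restriction of $a$ to that side is bounded. Proposition \ref{conjs} then applies directly and yields that $\mcc$ coincides with some $\mcc_{\ell,k}$.

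Since $\mcc$ was an arbitrary constriction curve, every constriction curve coincides with some $\mcc_{\ell,k}$, which is precisely the assertion of Conjecture \ref{conjeach}. Thus the whole content of the corollary reduces to the elementary observation that a bijective projection forces $a$ to be monotone, hence bounded on one side; the substantive work is entirely contained in Proposition \ref{conjs}.
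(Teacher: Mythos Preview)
Your proof is correct and follows essentially the same approach as the paper: the paper also observes that bijective projection onto $(\rr_+)_a$ means $\mcc$ is continuously parametrized by $a\in\rr_+$, so that $a$ is bounded on one side, and then applies Proposition~\ref{conjs}. You have simply spelled out in slightly more detail why the bijective projection forces monotonicity (ruling out the circle case), but the argument is the same.
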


For a given $\ell\in\rr$ and a subset $U\subset(\rr_+^2)_{a,s}$ set 
\begin{equation}\wt U_{\ell}:=\{(B,A;\omega)=(\frac{\ell}a, \frac sa, a^{-1}) \ | (a,s)\in U\}\subset
\rr_B\times(\rr_+^2)_{A,\omega}.\label{uphase}\end{equation}

\begin{corollary} \label{cqueer} 
 Let $\ell\in\zz$.  Every  queer constriction curve $\mcc\subset Constr_\ell$ (a component 
 different from $\mcc_{\ell,k}$'s, if any) satisfies the following statements:

(i) The coordinate function 
$a$ is unbounded from above on both sides of $\mcc$ (i.e. on both  semicurves in $\mcc$ separated by one point);

(ii) Let $U_{\mcc}\subset(\rr_+^2)_{a,s}$ denote the connected component of the complement 
$\rr_+^2\setminus\mcc$ lying on the right from $\mcc$. There exists a  domain 
$V\subset U_{\mcc}$ adjacent to $\mcc$ such that the two-dimensional domain 
$\wt V_{\ell}\subset\{ B=\ell\omega\}$, see (\ref{uphase}), lies in  a connected component of the interior of the 
three-dimensional phase-lock area $L_\ell$. The latter 
component of $Int(L_\ell)$ is called a {\bf queer component} adjacent to $\mcc$ from the right. 
It is  adjacent to no curve $\mcc_{\ell,k}$ (but a priori, may be adjacent to some other queer constriction curves). See Fig. 4b). 
\end{corollary}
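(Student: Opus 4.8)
The plan is to derive both parts from Proposition \ref{conjs}, from the positivity of constrictions (statement 8 of the introduction), and from the description of $Int(L_\ell)$ as a union of components separated by constriction curves. For part (i) the argument is immediate: since $\mcc$ is queer it differs from every $\mcc_{\ell,k}$, so the contrapositive of Proposition \ref{conjs} shows that $a$ is \emph{not} bounded on either of the two semicurves of $\mcc$; as $a>0$ on $\rr_+^2$, this says precisely that $a$ is unbounded from above on both sides. I would then record the resulting global shape of $\mcc$, which drives part (ii). By Theorem \ref{cangerms}(1) the only limit points of $Constr_\ell$ in $\rr_{\geq0}\times\rr_s$ are the $\beta_{\ell,k}$, and by part (2) these are approached only by the $\mcc_{\ell,k}$; hence the queer curve $\mcc$ has no limit point on $\{a=0\}$ and, together with the fact that $Constr_\ell$ does not accumulate on the $a$-axis, $\mcc$ is a properly embedded copy of $\rr$ in $\rr_+^2\cong\rr^2$ whose two ends both escape to $a=+\infty$. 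Such a curve separates $\rr_+^2$ into two components, and $U_{\mcc}$ is the right one. Since each $\mcc_{\ell,k}$ is connected, disjoint from $\mcc$, and reaches the line $\{a=0\}$ at $\beta_{\ell,k}$, while $\overline{U_{\mcc}}$ does not meet $\{a=0\}$, every $\mcc_{\ell,k}$ lies in the complementary (left) component and is disjoint from $\overline{U_{\mcc}}$.

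For the construction of $V$ in part (ii), I would set $P:=\{B=\ell\omega\}\subset\rr^2_{B,A}\times(\rr_+)_\omega$, so that the map $(a,s)\mapsto(\ell/a,s/a,a^{-1})$ of (\ref{uphase}) is a diffeomorphism of $\rr_+^2$ onto $P\cap\{A>0\}$. I pick a point $p_0=(a_0,s_0)\in\mcc$ at which $\mcc$ is transverse to the vertical direction (such $p_0$ exist because $a$ is nonconstant on $\mcc$). By positivity of constrictions the points $(a_0,s_0\pm\var)$ map into $Int(L_\ell)$ for small $\var>0$, and since the vertical segment through $p_0$ crosses $\mcc$ transversally these two points lie on opposite sides of $\mcc$. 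Because all constrictions lie on $\Lambda_\ell$ and the interior fills in between consecutive ones, the trace $\partial L_\ell\cap P$ coincides, near $p_0$, with $\mcc$ itself; hence in a small disk $D$ about $p_0$ the set $D\setminus\mcc$ maps into $Int(L_\ell)$ without crossing $\partial L_\ell$, so both of its sides map into the interior. Taking $V:=D\cap U_{\mcc}$ then gives a domain adjacent to $\mcc$ from the right with $\wt V_\ell\subset Int(L_\ell)$, and I let $W$ be the connected component of $Int(L_\ell)$ containing the connected set $\wt V_\ell$; this $W$ is the queer component.

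The remaining assertion, that $W$ is adjacent to no $\mcc_{\ell,k}$, is the main obstacle, and it is where the separation from part (i) enters. I would let $\Omega\subset\rr_+^2$ be the preimage under (\ref{uphase}) of $W\cap P$. Each connected component of $\Omega$ is an open subset of $\rr_+^2$ avoiding every constriction curve (these map into $\partial L_\ell$, not into $Int(L_\ell)$), in particular avoiding $\mcc$, so the component of $\Omega$ containing $V$ lies in $U_{\mcc}$. The delicate step is to promote this planar statement to a three-dimensional one: using that the constriction curves separate the components of $Int(L_\ell)$ (Remark after (\ref{intint})) and that near each constriction $Int(L_\ell)$ consists, by positivity, of exactly two lobes each meeting $P$, one shows that every component of $\Omega$ lies in $U_{\mcc}$ and that $\overline W$ can meet a constriction curve $\gamma\subset P$ only if $\overline\Omega$ meets $\gamma$ inside $P$. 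Were $W$ adjacent to some $\mcc_{\ell,k}$, this would force $\overline{U_{\mcc}}\supseteq\overline\Omega$ to contain a point of $\mcc_{\ell,k}$, contradicting the disjointness $\mcc_{\ell,k}\cap\overline{U_{\mcc}}=\emptyset$ obtained in part (i); hence no $\mcc_{\ell,k}$ lies in $\overline W$, which proves (ii). The hardest point throughout is controlling the three-dimensional component $W$ through its planar slice, namely ruling out that $W$ leaves $P$ and returns on the far side of $\mcc$; the local two-lobe normal form at constrictions and the global separation property are the essential inputs, and I would develop the two-lobe description carefully as the technical core of the argument.
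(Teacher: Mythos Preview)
Your argument for part (i) and the construction of $V$ in part (ii) match the paper's essentially verbatim (Proposition \ref{conjs} plus positivity and a transversal vertical line). The genuine divergence is in the last step, showing the queer component $W$ is adjacent to no $\mcc_{\ell,k}$.

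The paper does \emph{not} argue via the planar separation $\mcc_{\ell,k}\cap\overline{U_\mcc}=\emptyset$ and a subsequent control of $W\cap P$. Instead it reuses the \emph{code} invariant introduced in the proof of Theorem \ref{thgarl}: to each $x\in Int(L_\ell)$ one assigns the set of constriction curves that carry an odd number of constrictions of $L_\ell(\omega(x))$ lying below $x$. The code is constant on components (births and deaths of constrictions along a path in $Int(L_\ell)$ come in pairs from tangencies), so it suffices to compute it at two sample points. For a point of $\wt V_\ell$ just above the lowest transversal intersection of $\{a=a_0\}$ with $\mcc$, the code contains $\mcc$. On the other hand, for any component adjacent to some $\mcc_{\ell,k}$ one picks $x$ close to $\beta_{\ell,k}$; the vertical segment below $x$ then misses the queer curve $\mcc$ entirely (since $\mcc$ does not accumulate to the $s$-axis), so the code there does not contain $\mcc$ (this is Proposition \ref{peven}). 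The two codes disagree, hence the components are distinct.

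Your topological route is in principle viable, but the step you flag as ``hardest'' is exactly where the paper's code does the work for free. What you actually need is not a local two-lobe normal form but the single observation that the $(A,\omega)$-projection of $W$ is connected and misses every constriction curve (because over a constriction the $B$-fiber of $Int(L_\ell)$ is empty); this forces the projection, hence $\Omega$, into $U_\mcc$, and then your closing contradiction goes through. The paper's code argument encodes precisely this ``no crossing'' phenomenon as a locally constant invariant, which makes the proof shorter and sidesteps the promotion from planar to three-dimensional that you identify as delicate.
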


\subsection{The  Poincar\'e map of the isomonodromic foliation $\mcg$}
Recall that the foliation $\mcg$ is the foliation by graphs of solutions of system (\ref{isomnews}). 
  The initial 3-parametric family (\ref{josvec}) of dynamical systems on $\tt^2$ modeling Josephson junction is a 
cross-section to $\mcg$. See (\ref{paremb}). In the parameters $(\ell, \chi, a, s)$, this is the 
hyperplane $\{\chi=0\}$ with the plane $\{ a=0\}$ deleted.  We denote it by $\jos^o$ and its closure 
by $\jos$:  
$$\jos:=\{\chi=0, s>0\}\subset\rr^4_{\ell, \chi, a, s}, \ \ \jos^o:=\jos\setminus\{ a=0\}.$$
The foliation $\mcg$ is tangent to $\jos$ at points of its proper hyperplane $\{ a=0\}$. 
   
  \begin{definition} Consider the solution $(\ell,\chi(s),a(s))$ of 
  equation (\ref{isomnews}) whose graph  passes through a 
  given point $(\ell,0,a_0,s_0)\in\jos^o$ (initial condition). 
  Let $s_1>s_0$ be the next point where  $\chi(s_1)=0$, and let the vector function 
  $(\chi(s),a(s))$ be well-defined on the segment $[s_0,s_1]$. The map 
  $\mcp:(\ell,a_0,s_0)\mapsto(\ell,a(s_1),s_1)$ of the first return to $\jos^o$  will be called the 
{\it Poincar\'e map} of the isomonodromic foliation $\mcg$. 
\end{definition} 
\begin{remark}
The Poincar\'e map $\mcp$, wherever defined, preserves the rotation number function, and hence, the phase-lock area portrait. This follows from constance of the rotation number along 
leaves of the  foliation $\mcg$. 
\end{remark}

The next theorem is our second main result. 
\begin{theorem} \label{tcgerm} 1) The Poincar\'e map $\mcp$ is well-defined and analytic on a neighborhood of 
the hyperplane $\{ a=0\}$ in $\jos$, including this hyperplane. 

2) Given an $\ell\in\rr$ and an $s>0$, let $s^*>s$ be the next zero after $s$ of the solution of 
the $\ell$-th Bessel equation vanishing at $s$.  The Poincar\'e map $\mcp$ sends the point $(\ell,0,0,s)$ to the point $(\ell, 0, 0, s^*)$.  In particular, it sends each 
$\beta_{\ell,k}=(\ell, 0, 0, s_{\ell,k})\in\jos$  to $\beta_{\ell,k+1}$. Here 
$s_{\ell,k}$ is the $k$-th positive zero of the Bessel function $J_\ell(s)$. 

3) For $\ell\in\zz$ let $\mcc_{\ell,k}$ and $\beta_{\ell,k}$ be the constriction curves 
and their landing points from Theorem \ref{cangerms}. The Poincar\'e map $\mcp$ sends 
each constriction curve germ $(\mcc_{\ell,k},\beta_{\ell,k})$  to the next 
germ $(\mcc_{\ell,k+1},\beta_{\ell,k+1})$. 
\end{theorem}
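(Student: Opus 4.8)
The plan is to reduce all three parts to the behavior of the non-autonomous flow of (\ref{isomnews}) near its curve of equilibria $\{\chi=a=0\}$, whose linearization turns out to be governed by the Bessel equation. Fixing $\ell$ and linearizing (\ref{isomnews}) along the constant solution $(\chi,a)\equiv(0,0)$ gives $\left(\begin{matrix}\chi\\ a\end{matrix}\right)'=\left(\begin{matrix} -\ell/s & 1/(2s)\\ -2s & \ell/s\end{matrix}\right)\left(\begin{matrix}\chi\\ a\end{matrix}\right)$; eliminating $a=2s\chi'+2\ell\chi$ shows $s^2\chi''+s\chi'+(s^2-\ell^2)\chi=0$, the $\ell$-th Bessel equation. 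Hence, to leading order near $\{a=0\}$, $\chi$ evolves by Bessel's equation and the cross-section $\jos=\{\chi=0\}$ is met exactly at the zeros of the relevant Bessel solution. This pinpoints the candidate return: from $\chi(s_0)=0$, $a(s_0)=a_0$, the linearized $\chi$ is a multiple of the Bessel solution vanishing at $s_0$, and its next zero is the $s^*$ of the statement.

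The degeneracy is the main obstacle for part 1: the whole plane $\{\chi=a=0\}$ consists of equilibria, so the literal orbit through $(\ell,0,0,s_0)$ never leaves $\jos$ and the naive return map is undefined on $\{a=0\}$ (equivalently, $\mcg$ is tangent to $\jos$ there). I would desingularize by the blow-up $\chi=a_0\tilde\chi$, $a=a_0\tilde a$. In these variables (\ref{isomnews}) becomes analytic in $(a_0,s,\tilde\chi,\tilde a)$ with every nonlinear correction carrying a factor $a_0^2$, reducing at $a_0=0$ to the Bessel system above, while the initial condition turns into the fixed point $(\tilde\chi,\tilde a)=(0,1)$, independent of $a_0$. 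At $a_0=0$ the solution $\tilde\chi$ is the Bessel solution with $\tilde\chi(s_0)=0$, $\tilde\chi'(s_0)=1/(2s_0)\neq0$; since every zero of a nontrivial Bessel solution on $(0,\infty)$ is simple and such solutions oscillate, the next zero $s^*$ exists and the crossing of $\{\tilde\chi=0\}$ there is transverse. By continuous dependence the blown-up orbits exist on $[s_0,s^*]$ for small $a_0$, and the implicit function theorem yields an analytic return time $s_1(a_0)$ with $s_1(0)=s^*$ together with an analytic $\tilde a(s_1;a_0)$. Therefore $\mcp(\ell,a_0,s_0)=(\ell,\,a_0\tilde a(s_1;a_0),\,s_1(a_0))$ is analytic up to and including $a_0=0$, where it equals $(\ell,0,s^*)$; this proves parts 1 and 2. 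For $s_0=s_{\ell,k}$ the Bessel solution vanishing there is a multiple of $J_\ell$, so $s^*=s_{\ell,k+1}$ and $\mcp(\beta_{\ell,k})=\beta_{\ell,k+1}$.

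For part 3 I would argue by isomonodromy. Since $\ell'_s=0$, the map $\mcp$ preserves $\ell$ and sends $Constr_\ell$ to $Constr_\ell$. By Remark \ref{rkisom} (and \cite[Proposition 4.1]{bibgl}) a point of $\jos^o$ is a constriction precisely when its linear system (\ref{mchoy}) has trivial monodromy, and the leaves of $\mcg$ are isomonodromic; hence $\mcp$ preserves the trivial-monodromy locus and maps constrictions to constrictions. The blow-up of the previous paragraph shows in addition that the Jacobian of $\mcp$ at $\beta_{\ell,k}$ is nondegenerate: its $a_0$-derivative equals $\tilde a(s^*;0)=2s^*\tilde\chi'(s^*)\neq0$ and, by Sturm oscillation theory, the next-zero map $s_0\mapsto s^*$ is strictly monotone; thus $\mcp$ is a local analytic diffeomorphism there. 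Since $\mcc_{\ell,k}$ lands regularly at $\beta_{\ell,k}$ and (by Theorem \ref{cangerms}(3)) transversally to the $s$-axis, it follows together with part 2 that $\mcp$ carries the regular germ $(\mcc_{\ell,k},\beta_{\ell,k})$ to a regular germ of $Constr_\ell$ landing at $\beta_{\ell,k+1}$. By Theorem \ref{cangerms}(2) the unique constriction curve landing at $\beta_{\ell,k+1}$ is $\mcc_{\ell,k+1}$, so the image germ equals $(\mcc_{\ell,k+1},\beta_{\ell,k+1})$, as claimed.
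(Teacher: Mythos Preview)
Your argument is correct and follows essentially the same approach as the paper: desingularize at $\{\chi=a=0\}$ (the paper by projective blow-up along this plane, you by the equivalent affine rescaling $\chi=a_0\tilde\chi$, $a=a_0\tilde a$), obtain the Bessel-governed linear flow on the exceptional locus, and use simplicity of Bessel zeros plus transversality/implicit function theorem for the analytic return map near $\{a=0\}$. For part~3 the paper invokes that leaves of $\mcg$ are torus-flow conjugacies and hence preserve the identity-Poincar\'e-map characterization of constrictions, whereas you argue via isomonodromy and preservation of trivial monodromy; these are interchangeable through the results cited from \cite{bibgl}, and both conclude by the uniqueness statement in Theorem~\ref{cangerms}(2).
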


\subsection{Analytic extension and singularities of solutions of (\ref{isomnews}) and the 
Painlev\'e 3 equation (\ref{p3})} 
Our third main result is the following theorem. 

 \begin{theorem} \label{thext} 1) For every given $\ell\in\cc$ each solution $(\chi(s),a(s))$ of system 
  (\ref{isomnews}) extends to a meromorphic vector function on the universal cover of 
  punctured complex line $\cc^*_s=\cc\setminus\{0\}$. 
  
  2) Each its pole $s_0\in\cc^*$ (i.e., a pole of some its component) is  
  
  - either a zero with derivative one of the corresponding solution 
  $w(s)=\frac{a(s)}{2s\chi(s)}$ of Painlev\'e 3 equation (\ref{p3}); in this case $a(s)$ is holomorphic at $s_0$, $a(s_0)=\pm s_0$, and $\chi(s)$ has a simple pole at $s_0$ with residue $\pm\frac12$;
  
  - or a  pole of $w(s)$ with residue -1; in this case $a(s)$  has simple pole with residue $\pm s_0$ at $s_0$, and $\chi(s)$ is holomorphic 
   at $s_0$ and $\chi(s_0)=\mp\frac12$.

  3) For every initial condition $(\chi^*,a^*,s^*)$, $s^*\neq0$, corresponding to a linear system 
  (\ref{mchoy}) with trivial monodromy the corresponding solution $(\chi(s),a(s))$ of system (\ref{isomnews})  is meromorphic either on $\cc^*$, or on its double cover 
  $\cc^*_t$  given by $t\mapsto s=t^2$. More precisely, it 
  remains unchanged up to sign after analytic 
  extension along a simple circuit around the origin.  The corresponding solution $w(s)$ of Painlev\'e 3 equation (\ref{p3}) is always 
  meromorphic on $\cc^*$. 
  
  4) Statement 3) hold for every initial condition 
  $(0,a^*,s^*)$, where $(a^*,s^*)$ lies in the constriction submanifold $Constr_\ell$. 
  
  5) For every  $(0,a^*,s^*)$ such that $(a^*,s^*)$ lies in a constriction curve $\mcc_{\ell,k}$ the corresponding solution $(\chi(s),a(s))$ of (\ref{isomnews}) is meromorphic  on $\cc^*$. 
  \end{theorem}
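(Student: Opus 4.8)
The plan is to transfer the Painlev\'e property of~(\ref{p3}), available for $w$ by \cite[theorem 31.1]{GLS}, to the system~(\ref{isomnews}) for $(\chi,a)$, and then to control the monodromy around $s=0$ through the isomonodromic interpretation of Remark~\ref{rkisom}. For parts 1) and 2), I would first observe that the right-hand side of~(\ref{isomnews}) is polynomial in $(\chi,a)$ with coefficients holomorphic on $\cc^*_s$ (rational in $s$, with a pole only at $s=0$). Hence the solution continues holomorphically along every path in $\cc^*$ avoiding the locus where $|\chi|+|a|\to\infty$, and since the universal cover over $\cc^*$ is simply connected, this gives a single-valued function there off a discrete blow-up set $\Sigma$. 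Next I would use that $w=\frac{a}{2s\chi}$ solves~(\ref{p3}) (\cite[theorem 6.6]{bibgl}), so $w$ is meromorphic on the universal cover with only simple poles of residue $\pm1$. Differentiating $w$ along~(\ref{isomnews}) yields the two algebraic identities
\begin{equation}
a=2s\chi w,\qquad 8\chi^2w^2=w'+1-\tfrac{(2\ell-1)w}{s}+w^2,
\label{planrel}
\end{equation}
so that $\chi^2$ and $a^2=4s^2w^2\chi^2$ are rational in $(w,w',s)$, hence meromorphic on the universal cover. Wherever $w$ is finite and nonzero these identities bound $(\chi,a)$, so $\Sigma$ is contained in the set of zeros and poles of $w$. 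I would then treat each such point. At a zero of $w$, equation~(\ref{p3}) forces a simple zero with $w'=\pm1$; the case $w'=-1$ is a regular point of $(\chi,a)$ where $a$ crosses zero transversally, while at a zero with $w'=+1$ the expression $a^2$ is holomorphic with value $s_0^2\neq0$, so $a$ is holomorphic with $a(s_0)=\pm s_0$ and $\chi=\frac{a}{2sw}$ has a simple pole of residue $\pm\frac12$. Dually, at a pole of $w$ the residue is $+1$ at a regular point of $(\chi,a)$ where $\chi$ crosses zero, while residue $-1$ makes $a^2$ have a double pole, so $a$ has a simple pole of residue $\pm s_0$ and $\chi$ is holomorphic with $\chi(s_0)=\mp\frac12$. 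In each singular case exactly one coordinate blows up and the relevant square ($\chi^2$ or $a^2$) has even order, so there is no movable branch point; this yields parts 1) and 2).

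For parts 3) and 4) the only remaining issue is single-valuedness around $s=0$, i.e.\ the monodromy of $(\chi,a)$ under $s\mapsto e^{2\pi i}s$. Here I would use the isomonodromic interpretation: by \cite[theorem 6.6]{bibgl} and \cite{J}, $s\mapsto e^{2\pi i}s$ acts on the monodromy data of the linear system~(\ref{mchoy}) by a fixed transformation, and trivial monodromy is a fixed point of this action. Thus $w(e^{2\pi i}s)$ and $w(s)$ are solutions of~(\ref{p3}) with the same monodromy data, hence equal by uniqueness in the inverse monodromy problem, and $w$ is single-valued (meromorphic) on $\cc^*$. By the first identity in~(\ref{planrel}) the function $a^2$ is then single-valued meromorphic on $\cc^*$, so $a=\pm\sqrt{a^2}$ is either single-valued on $\cc^*$, or changes sign after one circuit and is single-valued on the double cover $t\mapsto s=t^2$; the same holds for $\chi$. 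That this sign ambiguity is consistent reflects the fact that $(\chi,a)\mapsto(-\chi,-a)$ is a symmetry of~(\ref{isomnews}) fixing $w$. Part 4) then follows because initial conditions in $Constr_\ell$ correspond to linear systems~(\ref{mchoy}) with trivial monodromy (\cite[proposition 4.1]{bibgl}).

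The hard part is part 5): ruling out the double cover and proving single-valuedness on $\cc^*$ exactly along the components $\mcc_{\ell,k}$. This amounts to computing, not merely bounding, the sign $\sigma\in\{\pm1\}$ with $(\chi,a)(e^{2\pi i}s)=\sigma\,(\chi,a)(s)$; equivalently, deciding the parity in $s$ of the leading term of $a^2$ at the fixed singularity $s=0$ from the local behavior of $w$ there. My plan is to identify the trivial-monodromy solutions with the classical Bessel-type solutions of~(\ref{p3}), and to exploit that $\mcc_{\ell,k}$ lands at $\beta_{\ell,k}=(0,s_{\ell,k})$ with $s_{\ell,k}$ a zero of $J_\ell$, $\ell\in\zz$: since $J_\ell$ is single-valued on $\cc_s$ for integer $\ell$, the corresponding expansion of $w$ at $s=0$ should be even enough to force $\sigma=+1$. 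Because $\sigma$ is a discrete invariant, locally constant along the connected family of solutions parametrized by $\mcc_{\ell,k}$, it should suffice to evaluate it in the limit at $\beta_{\ell,k}$, where the linear system~(\ref{mchoy}) degenerates to the single-valued Bessel system obtained by setting $\chi=a=0$. I expect the main difficulty to be making this limiting argument rigorous, since at $\beta_{\ell,k}$ the initial condition degenerates to the fixed point $(\chi,a)=(0,0)$ and the substitution $w=\frac{a}{2s\chi}$ breaks down; handling this will require following the monodromy data, which extends continuously to $\beta_{\ell,k}$, rather than the solution itself.
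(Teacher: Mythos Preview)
Your plan for parts 1)--4) is the paper's: the key identity you call (\ref{planrel}) is exactly the paper's equation (\ref{eqwu}) (derived via $u=\chi^{-1}$), expressing $a^2$ rationally in $(w,w',s)$; the case split on zeros/poles of $w$ with the two possible signs is the same; and for 3)--4) the paper also goes through isomonodromy, showing the Stokes matrices, formal monodromies and transition matrix survive the circuit so the two systems are gauge equivalent with gauge forced to be $\diag(1,\pm1)$.

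For part 5), you have the right idea (local constancy of the sign $\sigma$ along $\mcc_{\ell,k}$, then evaluation at the endpoint $\beta_{\ell,k}$), but the execution is simpler than you anticipate and does not require following monodromy data to the degenerate point. One rescales $(\chi,a)=a^*(\wt y_1,\wt y_2)$ and lets $(a^*,s^*)\to(0,s_{\ell,k})$ along $\mcc_{\ell,k}$; in this limit system (\ref{isomnews}) becomes its linearization (\ref{matrex}) at the fixed point $(0,0)$ (this is precisely the blow-up of Proposition~\ref{blowric}), and the rescaled solution converges to the solution of (\ref{matrex}) with initial value $(0,1)$ at $s=s_{\ell,k}$. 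If $\sigma=-1$ held along $\mcc_{\ell,k}$, the limit would inherit sign-reversing monodromy, so $-1$ would be an eigenvalue of the monodromy of (\ref{matrex}); but that monodromy is unipotent because the residue eigenvalues are $\pm\ell\in\zz$. Your worry that $w=\frac{a}{2s\chi}$ breaks down is bypassed since the rescaling cancels the common factor $a^*$. One correction: the constriction solutions are not themselves the Bessel-type solutions of (\ref{p3}); they \emph{degenerate} to the Bessel-type solutions in this blow-up limit.
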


 \subsection{The Poincar\'e map $\mcp$ and poles of  Painlev\'e 3 solutions}
 
 \begin{lemma} \label{crossect} \cite[lemma 6.7, p.5475]{bibgl}. 
 The graph of a non-identically-zero solution $(\chi(s), a(s))$ of system (\ref{isomnews}) crosses 
 the hyperplane $\{\chi=0\}$ exactly at those points $s=s_0\neq0$ where the corresponding solution 
 $w(s)=\frac{a(s)}{2s\chi(s)}$ of Painlev\'e 3 equation (\ref{p3}) has a pole with residue one.
 \end{lemma}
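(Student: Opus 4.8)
The plan is to prove the two implications separately: that every crossing of $\{\chi=0\}$ by the graph produces a simple pole of $w$ with residue one, and conversely that every residue-one pole of $w$ arises from such a crossing. The forward implication is a direct local computation, while the reverse one will rest on the classification of poles of the solution $(\chi,a)$ supplied by Theorem \ref{thext}.

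First I would treat the forward direction. Suppose the graph of a non-identically-zero solution crosses $\{\chi=0\}$ at some $s_0\neq0$, i.e.\ $\chi(s_0)=0$ with both $\chi$ and $a$ holomorphic at $s_0$. I would first record that $a(s_0)\neq0$. Indeed, for fixed $\ell$ the system (\ref{isomnews}) is an ordinary differential equation in $(\chi,a)$ whose right-hand side is analytic for $s\neq0$, and the constant function $(\chi,a)\equiv(0,0)$ is a solution; by uniqueness, any solution with $\chi(s_0)=a(s_0)=0$ would be identically zero, contradicting our hypothesis. Hence $a(s_0)\neq0$, and the second equation of (\ref{isomnews}) gives $\chi'(s_0)=\frac{a(s_0)}{2s_0}\neq0$, so $\chi$ has a \emph{simple} zero at $s_0$. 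Consequently $w=\frac{a}{2s\chi}$ has a simple pole there, with residue
$$\res_{s_0}w=\frac{a(s_0)}{2s_0\,\chi'(s_0)}=\frac{a(s_0)}{2s_0}\cdot\frac{2s_0}{a(s_0)}=1,$$
which is exactly the asserted value.

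For the reverse direction I would use that $w$ solves Painlev\'e 3 equation (\ref{p3}), so, as recalled above, all its poles are simple with residue $\pm1$. Let $s_0\neq0$ be a pole of $w$ with residue $+1$. Either $(\chi,a)$ is holomorphic at $s_0$, or $s_0$ is a pole of $(\chi,a)$. In the latter case Theorem \ref{thext}(2) says $s_0$ is either a zero of $w$ with unit derivative (hence not a pole of $w$) or a pole of $w$ with residue $-1$; neither is a residue-$(+1)$ pole. Therefore $(\chi,a)$ must be holomorphic at $s_0$, and since $a$ and $\frac1{2s}$ are then finite there while $w=\frac{a}{2s\chi}$ blows up, we must have $\chi(s_0)=0$; that is, the graph crosses $\{\chi=0\}$ at $s_0$ (with $a(s_0)\neq0$ again by the uniqueness argument).

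The main obstacle is the reverse direction, which depends on knowing that the only local behaviors of the meromorphic pair $(\chi,a)$ that create poles of $w$ are the two alternatives of Theorem \ref{thext}(2); in particular one must exclude higher-order poles of $\chi$ or $a$, and this is precisely the content of the local (Painlev\'e-type) analysis behind that theorem. A leading-order balance in (\ref{isomnews}) already signals the dichotomy — a simple pole of $a$ with $\chi$ regular forces $\res_{s_0}a=\pm s_0$, $\chi(s_0)=\mp\frac12$ and residue $-1$ for $w$, whereas a simple zero of $\chi$ with $a$ regular forces $a(s_0)=\pm s_0$ and residue $+1$ for $w$ — but upgrading these formal balances into an exhaustive statement is exactly what Theorem \ref{thext} provides. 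Secondary points to handle with care are the systematic exclusion of the trivial solution $(\chi,a)\equiv0$ and the omission of the singular point $s_0=0$, at which the system itself is singular.
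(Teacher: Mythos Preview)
Your argument is correct. Note, however, that the present paper does not itself prove this lemma: it is quoted verbatim from \cite[lemma 6.7]{bibgl}, so there is no ``paper's own proof'' here to compare against.

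Your forward implication is the straightforward local computation one would expect (and presumably matches the original in \cite{bibgl}). For the reverse implication you invoke Theorem~\ref{thext}(2), which is a result of the \emph{present} paper proved later in Subsection~2.8. This is logically sound---the proof of Theorem~\ref{thext} via Proposition~\ref{asympt} is self-contained and makes no use of Lemma~\ref{crossect}---but it is worth flagging that you are using machinery developed \emph{after} this lemma is stated, whereas the paper treats the lemma as a prior input from \cite{bibgl}. In effect you have reconstructed a proof from the tools of this paper rather than reproduced the original argument. A more elementary route, avoiding the full pole classification, is to note that a residue~$+1$ pole of $w$ forces, via equation~(\ref{eqwu}), that $a^2$ is holomorphic at $s_0$ (the polar parts of the $w$-terms cancel exactly when the residue is $+1$, by (\ref{polas})); then $a$ is bounded, and $w=\frac{a}{2s\chi}$ blowing up forces $\chi(s_0)=0$. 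This is essentially the Case~2, residue~$+1$ sub-case inside the proof of Proposition~\ref{asympt}, isolated from the rest of the classification.
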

 Our main results imply the following new result.
 \begin{proposition} The Poincar\'e map $\mcp:\jos^o\to\jos^o$ is well-defined at a point 
 $(\ell, 0, a_0, s_0)$, if and only if the corresponding solution $\frac{a(s)}{2s\chi(s)}$ of 
  Painlev\'e equation (\ref{p3}) defined by the solution $(\chi(s), a(s))$ of (\ref{isomnews}) with 
  $(\chi(s_0), a(s_0))=(0,a_0)$ is holomorphic on a finite interval $(s_0,s_1)$, 
  has pole with residue one at $s_1$ and has no zeros with unit derivative in $(s_0,s_1)$. 
  \end{proposition}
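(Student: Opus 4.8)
The plan is to prove the stated equivalence as a pure translation between the singular behaviour of the solution $(\chi(s),a(s))$ of system (\ref{isomnews}) along the real ray $\rr_+$ and that of the associated Painlev\'e~3 solution $w(s)=\frac{a(s)}{2s\chi(s)}$, using Lemma~\ref{crossect} and the pole classification of Theorem~\ref{thext}. First I would unpack what it means for $\mcp$ to be well-defined at $(\ell,0,a_0,s_0)\in\jos^o$: the solution of (\ref{isomnews}) through this point must admit a first return moment $s_1>s_0$ with $\chi(s_1)=0$, the vector function $(\chi(s),a(s))$ must have no pole on the whole segment $[s_0,s_1]$, one must have $\chi(s)\neq0$ on the open interval $(s_0,s_1)$ (so that $s_1$ is indeed the first return), and $a(s_1)\neq0$ (so that the image again lies in $\jos^o$). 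By Theorem~\ref{thext}(1) the solution is meromorphic on the universal cover of $\cc^*$, hence has only finitely many poles on any compact real segment, so first return and regularity on $[s_0,s_1]$ are well-posed notions. I would also record at the outset that, since $a_0\neq0$, the solution is not identically zero, and that $(\chi,a)\equiv(0,0)$ is the unique solution of (\ref{isomnews}) through the origin of the $(\chi,a)$-plane; hence our solution never meets $(0,0)$ and every zero of $\chi$ on it occurs with $a\neq0$.

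Next I would assemble the dictionary. By Lemma~\ref{crossect}, the points $s\in(s_0,s_1]$ where the graph crosses $\{\chi=0\}$ --- equivalently, where $\chi(s)=0$, since there $a\neq0$ and $\chi'=\frac a{2s}\neq0$ make the crossing transverse --- are exactly the poles of $w$ with residue one. By Theorem~\ref{thext}(2), every pole of $(\chi,a)$ on $\rr_+$ is of one of two kinds: a pole of $\chi$ with $a$ holomorphic, which is a zero of $w$ with unit derivative; or a pole of $a$ with $\chi$ holomorphic, which is a pole of $w$ with residue $-1$. Combining these, the poles of $w$ on $(s_0,s_1)$ are precisely the $\chi$-crossings (residue $+1$) together with the $a$-poles (residue $-1$), while the unit-derivative zeros of $w$ are precisely the poles of $\chi$. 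Consequently, holomorphy of $w$ on $(s_0,s_1)$ is equivalent to the absence there of both $\chi$-crossings and poles of $a$, and adjoining the requirement that $w$ have no unit-derivative zeros in $(s_0,s_1)$ is equivalent to excluding in addition all poles of $\chi$, i.e.\ to $(\chi,a)$ having no pole at all in $(s_0,s_1)$.

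Finally I would match the two lists to obtain both implications. For the forward direction, assuming $\mcp$ well-defined: the crossing at $s_1$ with $a(s_1)\neq0$ is a residue-one pole of $w$ by Lemma~\ref{crossect}; the absence of earlier crossings and of poles of $(\chi,a)$ on the open interval (guaranteed by first return and by regularity on $[s_0,s_1]$) yields, through the dictionary, that $w$ is holomorphic on $(s_0,s_1)$ and has no unit-derivative zeros there. For the converse: the residue-one pole at $s_1$ forces $\chi(s_1)=0$ with $(\chi,a)$ regular and $a(s_1)\neq0$, i.e.\ a crossing back into $\jos^o$; holomorphy of $w$ on $(s_0,s_1)$ forbids earlier crossings and poles of $a$; and the absence of unit-derivative zeros forbids poles of $\chi$; together these say $(\chi,a)$ is regular on $[s_0,s_1]$ and $s_1$ is the first return, which is exactly well-definedness of $\mcp$. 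The only point requiring real care --- and thus the main thing to get right, rather than a genuine obstacle --- is the completeness and disjointness of the pole/zero classification on the real interval: one must be sure that Theorem~\ref{thext}(2) and Lemma~\ref{crossect} capture \emph{every} singularity of $w$, and in particular that the ordinary zeros of $w$ arising from $a=0$ with $\chi\neq0$ (where $(\chi,a)$ stays regular and $\chi$ does not vanish, so neither the return nor regularity is affected) are correctly distinguished from the unit-derivative zeros that alone signal poles of $\chi$.
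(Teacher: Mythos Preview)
Your proposal is correct and follows exactly the route the paper intends: the paper's own proof is the single sentence ``The proposition follows from Lemma~\ref{crossect} and Theorem~\ref{thext},'' and your argument is a careful unpacking of precisely that. Your dictionary between the four types of singular behaviour of $w$ (residue $\pm1$ poles, derivative $\pm1$ zeros) and the corresponding behaviour of $(\chi,a)$ is accurate, and your observation that zeros of $w$ coming from $a=0$, $\chi\neq0$ have $w'=-1$ (hence are not the ``unit derivative'' zeros in the statement) is the right way to close the case analysis.
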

  \begin{proof} The proposition follows from Lemma \ref{crossect} and Theorem \ref{thext}. 
  \end{proof}
  
  \begin{corollary} All the iterates of the Poincar\'e map $\mcp$ are defined at a given 
  point $(\ell, 0, a_0, s_0)$, if and only if the corresponding solution $w(s)$ of Painlev\'e 3 
  equation (\ref{p3}) has an  infinite sequence of real positive poles with residue 1 going to $+\infty$ and 
  no zero $s^*>s_0$ with unit derivative.
  \end{corollary}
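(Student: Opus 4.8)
The plan is to obtain the corollary by iterating the preceding proposition along a single fixed solution of (\ref{isomnews}). First I would fix the point $(\ell,0,a_0,s_0)\in\jos^o$, let $(\chi(s),a(s))$ be the solution of (\ref{isomnews}) with $(\chi(s_0),a(s_0))=(0,a_0)$, and set $w(s)=\frac{a(s)}{2s\chi(s)}$. The crucial preliminary observation is that applying $\mcp$ repeatedly keeps us on this one solution: the vector field of (\ref{isomnews}) is regular on $\{\chi=0,\ s>0\}$ (there $\chi'_s=\frac a{2s}$ and $a'_s=\frac{a\ell}s$ are finite), so by uniqueness the solution through any image point $(\ell,0,a(s_k),s_k)$ is a restriction of the same $(\chi(s),a(s))$, and hence the associated Painlev\'e solution is the same $w$. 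Consequently the successive images $\mcp^k(\ell,0,a_0,s_0)=(\ell,0,a(s_k),s_k)$ are indexed by the increasing sequence $s_0<s_1<s_2<\dots$ of successive zeros of $\chi$, which by Lemma \ref{crossect} are exactly the successive poles of $w$ of residue one lying to the right of $s_0$.

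Next I would run the induction using the proposition: granting that $\mcp^k$ is defined (so that we have reached $s_k$), the iterate $\mcp^{k+1}$ is defined if and only if $w$ is holomorphic on a finite interval $(s_k,s_{k+1})$, has a residue-one pole at $s_{k+1}$, and has no zero of unit derivative in $(s_k,s_{k+1})$. Here I would invoke the singularity classification of Theorem \ref{thext}: the solution $(\chi(s),a(s))$ fails to stay finite on $[s_k,s_{k+1}]$ precisely at a zero of $w$ of unit derivative (where $\chi\to\infty$) or at a residue $-1$ pole of $w$ (where $a\to\infty$). Since "$w$ holomorphic on $(s_k,s_{k+1})$" rules out the latter and "$w$ has no zero of unit derivative there" rules out the former, the two conditions together are equivalent to the solution being regular on the whole segment with $s_{k+1}$ its first subsequent $\chi$-zero. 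Chaining over all $k$, all iterates are defined if and only if there is an infinite increasing sequence $s_1<s_2<\dots$ of residue-one poles of $w$, with $w$ holomorphic (hence free of residue $-1$ poles) on each gap $(s_k,s_{k+1})$, and with no zero of $w$ of unit derivative anywhere in $\bigcup_k(s_k,s_{k+1})=(s_0,+\infty)\setminus\{s_1,s_2,\dots\}$; as the $s_k$ are poles, not zeros, of $w$, the last condition is exactly the absence of a zero of unit derivative in $(s_0,+\infty)$.

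It remains to convert "infinite increasing sequence of residue-one poles" into "a sequence going to $+\infty$." This is where I would use that, by Theorem \ref{thext}, $w$ is meromorphic on $\cc^*$, so its poles are isolated on $\rr_+$: an increasing sequence of poles cannot accumulate at a finite point of $(0,+\infty)$, hence, being infinite, it is unbounded and tends to $+\infty$. Conversely, given infinitely many residue-one poles tending to $+\infty$ together with no zero of unit derivative beyond $s_0$, the poles being isolated and bounded below by $s_0$ guarantee a well-defined next one at each stage, and the induction produces every iterate. The genuinely delicate point, which I would flag explicitly, is the treatment of residue $-1$ poles: such a pole is compatible with $\chi\neq0$ (there $\chi(s_0)=\mp\frac12$) and is \emph{not} detected by the zero-of-unit-derivative condition, yet it blocks an iterate by forcing $a\to\infty$. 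Thus the hypothesis must be read as "the poles of $w$ on $(s_0,+\infty)$ form an infinite residue-one sequence tending to $+\infty$," so that residue $-1$ poles are excluded; the forward direction establishes this exclusion automatically through the proposition's holomorphy requirement, and the backward direction requires it as part of the hypothesis. With this understood the equivalence is complete.
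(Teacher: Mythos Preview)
Your argument is correct and follows the same route the paper intends: the paper states this Corollary without proof, as an immediate consequence of iterating the preceding Proposition, and that is exactly what you do. Your treatment is in fact more thorough than the paper's on two points. First, you explicitly verify that iterating $\mcp$ stays on a single solution of (\ref{isomnews}), so that a single Painlev\'e solution $w$ governs all iterates; the paper leaves this implicit. Second, and more substantively, you correctly flag the residue $-1$ pole issue: such a pole forces $a\to\infty$ and blocks an iterate, yet is not ruled out by ``no zero of unit derivative.'' The Corollary as stated is therefore only literally true if one reads ``an infinite sequence of real positive poles with residue $1$'' as meaning that \emph{all} poles of $w$ on $(s_0,+\infty)$ have residue $1$; you make this interpretation explicit, whereas the paper does not.

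One minor citation slip: you write ``by Theorem \ref{thext}, $w$ is meromorphic on $\cc^*$,'' but Theorem \ref{thext}, Statement 1) only gives meromorphy on the universal cover of $\cc^*$ in general (single-valuedness on $\cc^*$ requires the trivial-monodromy hypothesis of Statement 3)). This does not affect your argument, since you only need isolatedness of poles along $\rr_+$, which is simply connected and hence lifts to the universal cover; but the reference should be to the Painlev\'e property (meromorphy on the universal cover) rather than to meromorphy on $\cc^*$.
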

  
  \begin{problem} Describe those $(\ell, 0, a_0, s_0)\in\jos^o$ for which $w(s)$  
  has an  infinite sequence of real positive poles with residue 1 going to $+\infty$ and 
  no zero $s^*>s_0$ with unit derivative.
  \end{problem}
  
  \begin{remark} Existence of an open subset of initial conditions for Painlev\'e 3 equation 
  (\ref{p3}) for  which $w(s)$ has an  infinite sequence of real positive poles with residue 1 going to $+\infty$  follows from  \cite[theorem 3.1]{ak} (implying an analogous statement on zeros of solutions of Painlev\'e 5 equations) and Novokshenov's Backlund transformation argument 
  \cite{nov23}. Each transcendental meromorphic on $\cc^*$ solution of Painlev\'e 3 equation 
  has infinite number of {\it complex} poles with  residue one; under an additional genericity assumption, it has infinite number of {\it complex} poles with both residues 
  $\pm1$ \cite[theorem 31.1]{GLS}. Existence of one-dimensional family of tronqu\'ee solutions,  
  which have no poles in a sector containing the real positive semiaxis, was proved in \cite{lidati}. 
  \end{remark}

 \subsection{Plan of the proof of main results}
  
  The proof of main results is based on the following characterization of the constrictions.
  
  \begin{proposition} \label{propoinc} \cite[proposition 2.2]{4} 
Consider the period $2\pi$ flow map $h=h^{2\pi}$ of system (\ref{josvec}) 
acting on the transversal coordinate $\theta$-circle $\{\tau=0\}$. (It is also the Poincar\'e map 
of the latter cross-section.) A point $(B,A;\omega)$ 
is a constriction, if and only if $\omega,A\neq0$ and $h=Id$. 
\end{proposition}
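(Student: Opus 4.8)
The plan is to exploit the fact that the period map $h$ is not an arbitrary circle diffeomorphism but the restriction to the unit circle of a M\"obius transformation, so that the condition $h=\operatorname{Id}$ is governed by the elementary fixed-point count in $\psl_2(\cc)$. First I would pass from the real flow to the associated Riccati equation (\ref{ricg}) (for the subfamily (\ref{jostor}), i.e. $\chi=0$): writing $\Phi=e^{i\theta}$, a solution $\theta(\tau)$ of (\ref{josvec}) becomes a solution $\Phi(z)$, $z=e^{i\tau}$, of (\ref{ricg}), so the $\tau$-advance by $2\pi$ (one turn of $z$ about the origin) acts on $\Phi$ as the monodromy of the linear system (\ref{mchoy}), a M\"obius transformation $M\in\psl_2(\cc)$. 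Because the real flow keeps $\theta$ real, $M$ preserves the unit circle $\{|\Phi|=1\}$, and its restriction there is exactly $h$. Two observations then make $h$ tractable: (a) $h=\operatorname{Id}$ iff $M=\operatorname{Id}$ (a M\"obius map fixing the whole circle fixes at least three points, hence is trivial, and conversely), i.e. iff the monodromy of (\ref{mchoy}) is trivial; and (b) a point $\theta_*$ fixed by $h$ with $h'(\theta_*)=1$ is a fixed point of $M$ of multiplier $1$, that is a parabolic fixed point, and a non-identity M\"obius transformation has exactly one such, since a parabolic element of $\psl_2(\cc)$ has a single fixed point in $\cp^1$.

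For the forward implication, let $(B,A;\omega)$ be a constriction. By definition it is an off-axis separating point, so $A\ne0$ and $\omega>0$, and by the known garland structure it is the point where the two analytic boundary curves $B=G_{r,0}(A)$ and $B=G_{r,\pi}(A)$ cross. Along each of these curves $h$ carries a non-hyperbolic (saddle-node) fixed point, and the two saddle-nodes sit at the two distinct symmetric points $\theta=0$ and $\theta=\pi$ of the reflection $R:\theta\mapsto-\theta$, the involution for which $RhR=h^{-1}$ (this reversibility comes from the symmetry $(\theta,\tau)\mapsto(-\theta,-\tau)$ of (\ref{josvec})). Thus at the constriction $h$ has fixed points of multiplier $1$ at two distinct circle points; by observation (b) this is impossible for $M\ne\operatorname{Id}$, whence $M=\operatorname{Id}$ and, by (a), $h=\operatorname{Id}$. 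The converse runs the same chain backwards: if $\omega,A\ne0$ and $h=\operatorname{Id}$, then $M=\operatorname{Id}$, every orbit is $2\pi$-periodic with winding $r$, so $\rho=r\in\zz$ and the point lies in $\overline{L_r}$; moreover $\theta=0$ and $\theta=\pi$ are both degenerate fixed points, placing the point simultaneously on the two boundary curves $G_{r,0}$ and $G_{r,\pi}$, i.e. at their crossing, which together with $A\ne0$ (excluding the growth points on the $B$-axis) means it is a constriction.

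The main obstacle is the geometric dictionary underlying the two ends of this chain rather than the M\"obius count itself. I must justify that a garland pinch is exactly a crossing of $G_{r,0}$ and $G_{r,\pi}$, that the degeneracies along these two curves are located at the two distinct $R$-fixed points $0$ and $\pi$ (this uses the reversibility $RhR=h^{-1}$ together with the collision of the symmetric fixed-point pair $\pm\theta_0$ at a point of $\{0,\pi\}$), and, for the converse, that $M=\operatorname{Id}$ is a genuine separating point with interior on both sides along $\Lambda_r$. This last point is precisely the positivity of constrictions; here I would combine the real-analytic dependence of $M$ on $(B,A,\omega)$ with a local study of the locus $\{\operatorname{tr}^2 M=4\}$ near $M=\operatorname{Id}$ to recover the two boundary branches and the pinch.
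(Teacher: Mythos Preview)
The paper does not prove this proposition; it is quoted from \cite[proposition~2.2]{4} and stated without argument, so there is nothing in the present paper to compare your proof against.

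Your strategy is sound and is essentially the standard one. The M\"obius structure of $h$ (via the Riccati equation (\ref{ricg}) and the linear system (\ref{mchoy})), the reversibility $RhR=h^{-1}$ coming from the symmetry $(\theta,\tau)\mapsto(-\theta,-\tau)$ of (\ref{jostor}), and the consequence that any fixed point of $h$ at $0$ or $\pi$ automatically has multiplier $1$, combine with the elementary fact that a non-identity element of $\psl_2(\cc)$ has at most one fixed point of multiplier $1$ to give the forward implication cleanly: at a constriction both boundary curves $\partial L_{r,0}$ and $\partial L_{r,\pi}$ pass, so $h$ fixes both $0$ and $\pi$ with multiplier $1$, forcing $M=\operatorname{Id}$ and hence $h=\operatorname{Id}$.

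One correction to your last paragraph: the converse does \emph{not} need positivity of constrictions, and you should not invoke it. If $h=\operatorname{Id}$ then trivially $h(0)=0$ and $h(\pi)=\pi$, so the parameter point lies simultaneously on both boundary curves $\partial L_{r,0}=\{B=G_{r,0}(A)\}$ and $\partial L_{r,\pi}=\{B=G_{r,\pi}(A)\}$. Since these are graphs over the $A$-axis and the planar phase-lock area is the region between them, any common point is a zero-width point of the garland, i.e.\ a separating point; with $A\neq0$ this is by definition a constriction. Appealing to Theorem~\ref{thpos} (from \cite{bibgl}) here would be both unnecessary and circular in spirit, since that result is much later than \cite{4} and itself makes use of the present characterisation.
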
 

\begin{proposition} \label{panal} In the coordinates $(\ell, a, s)$ on the parameter space of 
family (\ref{josvec}) the Poincar\'e map $h=h_{\ell, a, s}:S^1_\theta\to S^1_\theta$ 
is analytic as function on $\rr^3_{\ell, a, s}\times S^1_{\theta}$. 
\end{proposition}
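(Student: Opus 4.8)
The plan is to read $h$ as the time-$2\pi$ flow map of the scalar non-autonomous equation obtained by lifting (\ref{josvec}) to the universal cover $\rr_\theta$ of the circle, and then to invoke analytic dependence of solutions of a holomorphic ordinary differential equation on initial conditions and parameters. Write $f(\theta,\tau;\ell,a,s):=a\cos\theta+\ell+s\cos\tau$, so that the first equation in (\ref{josvec}) reads $d\theta/d\tau=f(\theta,\tau;\ell,a,s)$; let $\theta(\tau;\theta_0,\ell,a,s)$ denote its solution with $\theta(0)=\theta_0$, so that $h(\theta_0)=\theta(2\pi;\theta_0,\ell,a,s)$. The function $f$ is entire in all five variables $(\theta,\tau,\ell,a,s)\in\cc^5$, since $\cos$ is entire. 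Moreover $f$ is $2\pi$-periodic in $\theta$; hence the solutions issued from $\theta_0$ and $\theta_0+2\pi$ differ by the constant $2\pi$, so $h(\theta_0+2\pi)=h(\theta_0)+2\pi$, and $h$ descends to a well-defined self-map of the circle $S^1_\theta=\rr/2\pi\zz$. It therefore suffices to prove that the lift $(\theta_0,\ell,a,s)\mapsto\theta(2\pi;\theta_0,\ell,a,s)$ is real-analytic on $\rr^4$.

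Next I would record an a priori bound. For real parameters the vector field (\ref{josvec}) lives on the compact torus $\tt^2$ and is therefore complete, so $\theta(\tau;\theta_0,\ell,a,s)$ is defined for all $\tau\in\rr$; and since $|\dot\theta|\le|a|+|\ell|+|s|$, one has $|\theta(\tau;\theta_0,\ell,a,s)-\theta_0|\le 2\pi(|a|+|\ell|+|s|)$ for $\tau\in[0,2\pi]$. Thus over the compact time interval $[0,2\pi]$ every real trajectory stays in an explicit bounded region of the $\theta$-line. Fix an arbitrary real base point $p_0=(\theta_0^*,\ell^*,a^*,s^*)$. The main (and only genuine) technical point is that the complexified equation is not complete: as $\mathrm{Im}\,\theta\to\pm\infty$ the term $a\cos\theta$ grows exponentially, so one cannot expect global existence for complex data. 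This is handled by localization: by the a priori bound the real trajectory through $p_0$ remains in a fixed bounded $\theta$-disk $D$ for $\tau\in[0,2\pi]$, on which $f$ and all its derivatives are bounded; hence, shrinking to a small polydisc neighborhood $W\subset\cc^4$ of $p_0$, the solution stays in a slightly larger bounded disk for all $\tau\in[0,2\pi]$ and all data in $W$.

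On such a $W$ the standard theorem on holomorphic dependence of solutions of a holomorphic ordinary differential equation on initial conditions and parameters applies and yields that $(\theta_0,\ell,a,s)\mapsto\theta(\tau;\theta_0,\ell,a,s)$ is holomorphic on $W$, uniformly in $\tau\in[0,2\pi]$. Equivalently, one checks directly that the Picard iterates are holomorphic in $(\theta_0,\ell,a,s)$ and, by the bound above, converge uniformly on $W\times[0,2\pi]$, so that the limit is holomorphic by the Weierstrass theorem. Evaluating at $\tau=2\pi$ shows that $h=\theta(2\pi;\cdot)$ is holomorphic on $W$. As $p_0$ was arbitrary, $h$ extends holomorphically to a complex neighborhood of $\rr^4$, i.e.\ its restriction to $\rr^4$ is real-analytic; together with the $2\pi$-periodicity relation $h(\theta_0+2\pi)=h(\theta_0)+2\pi$ this proves that $h$ is analytic as a function on $\rr^3_{\ell,a,s}\times S^1_\theta$, as claimed.
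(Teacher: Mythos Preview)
Your proof is correct and follows the same approach as the paper, which simply invokes the classical theorem on analytic dependence of solutions of an analytic differential equation on parameters and initial conditions. You have supplied the details (lift to $\rr_\theta$, a priori bound on real trajectories, localization to a bounded $\theta$-disk to handle the exponential growth of $\cos\theta$ for complex $\theta$, and descent to $S^1$ via $2\pi$-periodicity) that make that invocation rigorous, but the underlying argument is the same.
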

The proposition follows from the classical theorem on analytic dependence of 
solution of analytic differential equation on parameters and initial condition.

  First in Subsection 2.1 we prove a part of Theorem \ref{cangerms}. Namely, we show that accumulation points 
  in the $s$-axis (if any) of the manifold $Constr_\ell$ have ordinates that are zeros of 
  the Bessel function $J_\ell$. We show that the germ of $Constr_\ell$ at each intersection point 
  lies in a regular germ of analytic curve. Then we deduce that the union of the closure 
  $\overline{Constr_\ell}\subset\rr_a\times(\rr_+)_s$  and  its image under the symmetry with 
  respect to the $s$-axis is an analytic submanifold. 
  
  In Subsection 2.3 we prove  Statements 1) and 2) of Theorem \ref{tcgerm}. To do this, 
  we blow up the space $\rr^4_{\ell,\chi,a,s}\setminus\{ s=0\}$ along the plane $\{a=\chi=0\}$. The complexified blown up manifold  will be denoted by $\wh{\cc^4}$. Its exceptional divisor 
   is $\Sigma:=\cc^2_{\ell,s}\times\cp^2_{[\chi:a]}\setminus\{ s=0\}$. By $\Pi\subset\wh\cc^4$ we denote the strict transform of the hyperplane $\{\chi=0\}$, which corresponds to model of Josephson junction.
 The line field given by (\ref{isomnews}) induces a holomorphic line field on the complexified blown-up manifold $\wh{\cc^4}$ that is tangent to the exceptional divisor $\Sigma$. The hypersurface $\Pi$ appears to be  its global cross-section. The restriction to $\Sigma$ of the line field appears to be given by the Riccati equation equivalent to the projectivization of the $\ell$-th Bessel equation. This yields a 
 geometric interpretation of the well-known fact that Painlev\'e equation (\ref{p3}) has a one-dimensional family of the so-called Bessel type solutions $w(s)=\frac{\frac{du(s)}{ds}}{u(s)}$, where $u(s)$ is $s^\ell$ times 
  a solution of the $\ell$-th Bessel equation. See  \cite[Section 10 in Chapter 12]{conte} and  Theorem \ref{bepainl} in Subsection 2.2 below. 
  
  It will be deduced from the above statements that the 
 first return Poincar\'e map $\mcp:\Pi\to\Pi$ is well-defined on a neighborhood of the intersection 
 $\Pi\cap\Sigma$, and its restriction to $\Pi\cap\Sigma$ is the map $(\ell,s)\mapsto(\ell,s^*)$, where 
 $s^*>s$ is the next (after $s$) zero of a solution of the Bessel equation vanishing at $s$. 
 
   Next, in Subsection 2.4 we show that each zero $s_{\ell,k}>0$ of Bessel function $J_\ell$ corresponds to an accumulation 
  point $\beta_{\ell,k}=(0,s_{\ell,k})$ of some constriction curve: a component of the manifold $Constr_\ell$.
   To do this, we first show that at least some $\beta_{\ell,m}$ is an accumulation point. This is done by using the above-mentioned part of Theorem \ref{cangerms} 
   (already proved in Subsection 2.1)  and 
   results of Klimenko and Romaskevich \cite{RK} on Bessel asymptotics of boundaries of the phase-lock areas. Then we deduce that each $s_{\ell,k}$ 
   corresponds to a constriction curve. This is done by using Statements 1) and 2) of 
   Theorem \ref{tcgerm} (already proved in Subsection 2.3) and  applying iterates of the Poincar\'e map $\mcp$ 
   (and its inverse) sending the above accumulation point $\beta_{\ell,m}$ to any other 
   $\beta_{\ell,k}$. 
  This will  finish the proofs of Theorems \ref{cangerms} and \ref{tcgerm}. 
  
    In Subsection 2.5 we prove Proposition \ref{conjs}. 
    
    In Subsection 2.6 we prove Theorem \ref{thgarl} and Corollary \ref{cqueer}.
    
  In Subsection 2.8 we prove Theorem \ref{thext}.  The proof uses Stokes phenomena theory and isomonodromicity of family of linear systems (\ref{mchoy}) along solutions of system (\ref{isomnews}). The corresponding background material on Stokes phenomena and isomonodromic families is briefly recalled in Subsection 2.7.

   \section{Proof of main results}
  \subsection{Intersection points of $\overline{Constr_\ell}$ with the $s$-axis: their regularity and landing components}
 Here we prove the following lemma.
 
 \begin{lemma} \label{lacreg} 
  1) For every $\ell\in\zz$ each finite accumulation point of the submanifold
 $Constr_\ell\subset\rr^2_+$ (if any) lies in the $s$-axis and has the type $\beta=(0,y)$, where $y>0$ is a  zero of the Bessel function $J_{\ell}$. 
 
 2) The subset $\widehat{Constr}_\ell\subset\rr^2$, which is the union  
  of the closure $\overline{Constr_\ell}\subset\rr^2$ and its symmetric 
  with respect to the $s$-axis,  
  is a one-dimensional analytic submanifold  intersecting the $s$-axis orthogonally at the above points $\beta$. 
  
  3) Each connected component of the submanifold $Constr_\ell$ landing at some point 
  $\beta$ of the $s$-axis accumulates to no  other point of the $s$-axis. 
  
  4) Each above landing component is projected onto the whole $a$-axis $\rr_+$.
  \end{lemma}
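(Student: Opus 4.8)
The plan is to work from Proposition \ref{propoinc}: a point of $(\rr^2_+)_{a,s}$ is a constriction if and only if the time-$2\pi$ flow map $h=h_{\ell,a,s}$ of \eqref{jostor} is the identity, equivalently (Remark \ref{rkisom}) the monodromy of \eqref{mchoy} is trivial. First I localize the limit set. The set $Constr_\ell=\{(a,s)\in\rr^2_+:h_{\ell,a,s}=Id\}$ is closed in the open quadrant, being the intersection of the closed conditions $h(\theta)=\theta$ with $h$ analytic in $(\ell,a,s,\theta)$ (Proposition \ref{panal}); hence any finite limit point outside $Constr_\ell$ lies on $\{a=0\}\cup\{s=0\}$. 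Since $Constr_\ell$ accumulates at no point of the $a$-axis $\{s=0\}$ (the cited consequence of \cite{bibgl}), every finite limit point has the form $\beta=(0,y)$ with $y>0$; this gives the first half of statement~1).

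To identify $y$ and to extract regularity I expand in $a$ near $a=0$. At $a=0$ equation \eqref{jostor} decouples to $\theta'=\ell+s\cos\tau$, whose time-$2\pi$ map is $\theta_0\mapsto\theta_0+2\pi\ell$, the identity on the circle for $\ell\in\zz$; so the displacement $\delta(\theta_0;\ell,a,s):=\tilde h(\theta_0)-\theta_0-2\pi\ell$ vanishes identically on $\{a=0\}$ and, being analytic, is divisible by $a$. Writing $\theta(\tau)=\theta_0+\ell\tau+s\sin\tau+a\theta_1(\tau)+O(a^2)$ gives $\theta_1'=\cos(\theta_0+\ell\tau+s\sin\tau)$, and the Jacobi--Anger expansion $e^{is\sin\tau}=\sum_nJ_n(s)e^{in\tau}$ yields
\[
\delta(\theta_0;\ell,a,s)=2\pi(-1)^\ell J_\ell(s)\,a\cos\theta_0+O(a^2).
\]
Along any sequence of constrictions tending to $\beta=(0,y)$ the $\cos\theta_0$-coefficient must vanish in the limit, forcing $J_\ell(y)=0$, which (with $y>0$) completes statement~1).

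For statement~2) I promote this leading coefficient to a defining function. Set $c_1(\ell,a,s):=\tfrac1{\pi a}\int_0^{2\pi}\delta(\theta_0;\ell,a,s)\cos\theta_0\,d\theta_0$, real-analytic across $\{a=0\}$ with $c_1(\ell,0,s)=2\pi(-1)^\ell J_\ell(s)$. Since the zeros of $J_\ell$ are simple, $\partial_s c_1(\ell,0,y)=2\pi(-1)^\ell J_\ell'(y)\neq0$, so the implicit function theorem presents $\{c_1=0\}$ near $\beta$ as a regular analytic arc $\Gamma=\{s=\tilde s(a)\}$ defined on a full neighborhood of $a=0$. Every constriction satisfies $\delta\equiv0$, hence $c_1=0$, so $Constr_\ell\subseteq\Gamma$ near $\beta$; as $Constr_\ell$ is a one-dimensional analytic submanifold accumulating at $\beta$ and contained in the regular arc $\Gamma$, it coincides with $\Gamma\cap\{a>0\}$ near $\beta$, giving the regular germ and the landing. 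Finally the rotation $\theta\mapsto\theta+\pi$ conjugates \eqref{jostor} with parameter $a$ to the one with parameter $-a$, whence $\delta_{-a}(\theta_0)=\delta_a(\theta_0+\pi)$ and $c_1$ is \emph{even} in $a$; thus $\tilde s'(0)=0$, the arc is tangent to the horizontal direction and crosses the $s$-axis orthogonally, and the union of $\overline{Constr_\ell}$ with its reflection equals $\Gamma$ locally, a regular analytic submanifold. This proves~2).

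For statement~3) I use the reflection $\iota:(a,s)\mapsto(-a,s)$, under which $\widehat{Constr}_\ell$ is invariant with fixed locus exactly the crossing points $\beta$. On a component homeomorphic to $\rr$ (and not contained in the axis), $\iota$ restricts to a nontrivial involution, which is orientation-reversing and hence has a single fixed point; so such a component meets the $s$-axis only once, and the landing component accumulates at no other axis point. Statement~4), that the $a$-coordinate maps each landing component onto $\rr_+$, is the main obstacle: it amounts to $a$ being unbounded along the component, which also excludes compact ($S^1$) components and thereby closes the remaining case of~3). The intended route is global, via the isomonodromic foliation $\mcg$: a constriction is precisely a trivial-monodromy point, this condition is preserved along leaves of $\mcg$, and on the blow-up of $\{a=\chi=0\}$ the restricted foliation is the projectivized $\ell$-th Bessel equation (the Bessel-type solutions $w=u'/u$ of \eqref{p3}); transporting a landing point along its leaf while controlling $a$ through the Hamiltonian structure of \eqref{isomnews} (the flow preserves $d\chi\wedge da$) should realize arbitrarily large $a$, giving surjectivity by connectedness. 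I expect this quantitative control of $a$ along the leaf to be the technical heart of the argument.
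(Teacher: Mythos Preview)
Your treatment of statements 1) and 2) is essentially the paper's approach with cosmetic variations: you use Jacobi--Anger where the paper integrates directly, and you package the $s$-derivative via the Fourier coefficient $c_1$ where the paper computes the mixed second variation $\partial^2\Delta/\partial a\partial s$ and invokes the recurrence $J_{\ell+1}-J_{\ell-1}=-2J_\ell'$. Your symmetry $\theta\mapsto\theta+\pi$ realizing $a\mapsto -a$ is a clean alternative to the paper's time-reversal argument. Your involution argument for 3) is a genuinely different route; the paper instead derives both 3) and 4) in one stroke from unboundedness of $a$ on each component.

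The gap is in statement 4), and your proposed route through $\mcg$ does not work. Leaves of $\mcg$ are parametrized by $s$ and immediately leave the cross-section $\{\chi=0\}$; following them does not trace out a constriction curve $\mcc\subset\{\chi=0\}$. The Poincar\'e return map $\mcp$ does preserve the constriction property, but it sends a point on one curve $\mcc_{\ell,k}$ to a point on the \emph{next} curve $\mcc_{\ell,k+1}$, not further along the same curve, so it gives no information about the range of $a$ on a fixed component. The symplectic invariance of $d\chi\wedge da$ under the non-autonomous flow is a statement about the map between $(\chi,a)$-planes at two different values of $s$; it yields no obvious growth or unboundedness of $a$ restricted to $\{\chi=0\}$. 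In short, the isomonodromic machinery moves you \emph{between} constriction curves, not \emph{along} one.

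The paper does not attempt any such argument: it simply cites \cite[theorem 1.12]{bibgl}, which asserts directly that $a=\omega^{-1}$ is unbounded from above on every connected component of $Constr_\ell$. With that input, 4) is immediate (a connected curve landing where $a\to0$ and on which $a$ is unbounded projects onto all of $\rr_+$), and it simultaneously rules out compact components, closing the $S^1$ case your involution argument for 3) had to leave open.
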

  The first step of the proof of Lemma \ref{lacreg} is the following proposition. 
 
 \begin{proposition} \label{necess} Let $\ell\in\zz$. Let a sequence of points in $Constr_\ell$ accumulate to 
a point $(0,y)$ of the $s$-axis. Then $J_\ell(y)=0$. 
\end{proposition}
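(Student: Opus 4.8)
The plan is to work directly with the period-$2\pi$ Poincar\'e map $h=h_{\ell,a,s}$ of the field (\ref{josvec}) on the $\theta$-circle, using the characterization of constrictions in Proposition \ref{propoinc} (a point is a constriction if and only if $a,s\neq0$ and $h=\mathrm{Id}$) together with its joint analyticity in $(\ell,a,s,\theta)$ from Proposition \ref{panal}. The crucial degeneracy along $\{a=0\}$ is the following. When $a=0$ the first equation in (\ref{josvec}) decouples to $\dot\theta=\ell+s\cos\tau$, with solution $\theta(\tau)=\theta_0+\ell\tau+s\sin\tau$; hence the flow from $\tau=0$ to $\tau=2\pi$ sends $\theta_0\mapsto\theta_0+2\pi\ell$. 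Since $\ell\in\zz$, this is the identity on $S^1=\rr\slash2\pi\zz$. Thus $h_{\ell,0,s}=\mathrm{Id}$ for every $s$, the constriction equation is degenerate on $\{a=0\}$, and one must extract its first-order part in $a$.

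Concretely, I would pass to the lift $\wt h_{\ell,a,s}\colon\rr\to\rr$ normalized by $\wt h_{\ell,0,s}(\theta)=\theta+2\pi\ell$, and set $\Psi(a,s,\theta):=\wt h_{\ell,a,s}(\theta)-\theta-2\pi\ell$. By Proposition \ref{panal} this is analytic, and by the previous paragraph it vanishes identically on $\{a=0\}$. Analytic (Hadamard) division by the coordinate $a$ then yields $\Psi=a\,F$ with $F$ analytic in $(a,s,\theta)$ and $F(0,s,\theta)=\partial_a\wt h_{\ell,a,s}(\theta)\big|_{a=0}$. This last quantity is the first variation of the flow map: writing $v(\tau)=\partial_a\theta(\tau;a)\big|_{a=0}$, the variational equation along the $a=0$ solution reads $\dot v=\cos(\theta+\ell\tau+s\sin\tau)$ with $v(0)=0$ (the $\partial_\theta$-term drops, as it carries a factor $a$), whence
\begin{equation*}
F(0,s,\theta)=\int_0^{2\pi}\cos(\theta+\ell\tau+s\sin\tau)\,d\tau.
\end{equation*}

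Expanding the cosine and applying the Jacobi--Anger expansion $e^{is\sin\tau}=\sum_n J_n(s)e^{in\tau}$ (equivalently, the integral representation (\ref{besself})), the $\sin\theta$-component integrates to zero by orthogonality, while $\int_0^{2\pi}\cos(\ell\tau+s\sin\tau)\,d\tau=2\pi J_\ell(-s)=2\pi(-1)^\ell J_\ell(s)$. Therefore
\begin{equation*}
F(0,s,\theta)=2\pi(-1)^\ell J_\ell(s)\cos\theta.
\end{equation*}

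Finally I would conclude as follows. Let $(a_n,s_n)\to(0,y)$ be the given sequence of constrictions. At each of them $a_n\neq0$ and $h_{\ell,a_n,s_n}=\mathrm{Id}$, so $\Psi(a_n,s_n,\theta)\equiv0$; dividing by $a_n\neq0$ gives $F(a_n,s_n,\theta)\equiv0$ for all $\theta$. Letting $n\to\infty$ and using continuity of $F$, we obtain $F(0,y,\theta)=2\pi(-1)^\ell J_\ell(y)\cos\theta\equiv0$; evaluating at $\theta=0$ yields $J_\ell(y)=0$, as claimed. The only substantive point is the first-variation computation and its identification with $J_\ell$ through the Bessel integral; the division by $a$ and the passage to the limit are routine once the joint analyticity of Proposition \ref{panal} and the degeneracy $h_{\ell,0,s}=\mathrm{Id}$ are in hand.
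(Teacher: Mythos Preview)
Your proof is correct and follows essentially the same route as the paper: compute the first variation $\partial_a h|_{a=0}$ via the variational equation, identify the resulting integral with $2\pi(-1)^\ell J_\ell(s)\cos\theta$, and use the constriction characterization $h=\mathrm{Id}$ to force this first-order term to vanish at the accumulation point. The only cosmetic differences are that the paper argues the vanishing of the first-order term by a direct Taylor expansion contradiction rather than by your Hadamard division $\Psi=aF$ followed by passage to the limit, and it computes the integral using the defining formula (\ref{besself}) rather than Jacobi--Anger.
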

\begin{proof} The submanifold $Constr_\ell$ is the intersection of the positive quadrant $\rr_+^2$ 
with the analytic subset 
\begin{equation}\mcm_\ell:=\{ h_{\ell, a, s}=Id\}\subset\rr^2,\label{monell}\end{equation}
see Propositions \ref{propoinc} and \ref{panal}. The germ at $(0,y)$ of the analytic subset $\mcm_\ell$ is one-dimensional, by assumption. 
Therefore, it is a finite union of germs of analytically parametrized curves. Let us calculate 
the derivative of the time $\tau$ flow maps $h^\tau:S^1\to S^1$ (and in particular, of the map 
$h=h^{2\pi}$) in the parameters $(a, s)$ from the equation in variations corresponding 
to (\ref{jostor}).  Set $\theta(\tau,\theta_0,a,s)=h^\tau(\theta_0)$ to be the solution of 
differential equation (\ref{jostor}) with the initial condition $\theta=\theta_0$ at $\tau=0$. One has 
\begin{equation}\theta|_{a=0}=\theta(\tau,\theta_0,0,s)=f(\tau):=\theta_0+\ell\tau+s\sin\tau, \ 
h|_{a=0}=Id,\label{theta0}\end{equation}
\begin{equation}\dot\theta'_a=\frac{d}{d\tau}\frac{\partial\theta}{\partial a}=
\cos\theta-a\sin\theta \theta'_a.\label{difeta}\end{equation}
Substituting $a=0$ and $\theta=f(\tau)$, see (\ref{theta0}), to (\ref{difeta}) and 
integrating (\ref{difeta}) with the initial condition $\theta'_a|_{\tau=0}=0$ yields 
$$\theta'_a(\tau)=\int_0^\tau\cos f(\tau)d\tau,$$
\begin{equation}\frac{\partial h}{\partial a}(\theta_0)|_{a=0}=\int_0^{2\pi}\cos(\theta_0+\ell\tau+s\sin\tau)d\tau.\label{frach}\end{equation} 

{\bf Claim 1.} {\it If $(0,y)$ is an accumulation point of the set $Constr_\ell$, then} 
\begin{equation}\frac{\partial h_{\ell, a, y}}{\partial a}(\theta_0)|_{a=0}=0 \text{ for every } \theta_0\in S^1.
\label{h'=0}\end{equation}

\begin{proof} Suppose the contrary: there exists a sequence of points $(a_k, s_k)\in Constr_\ell$ converging to $(0,y)$, as $k\to\infty$, but the derivative (\ref{h'=0}) is non-zero at  
some $\theta_0$. One has 
$$h_{\ell, a_k, s_k}(\theta_0)=\theta_0+a_k\frac{\partial h_{\ell, a, s_k}}{\partial a}(\theta_0)|_{a=0}+
O(a_k^2),$$
since $h_{\ell, 0, s}\equiv Id$, see (\ref{theta0}). Therefore, the latter right-hand side 
cannot be equal to $\theta_0$ for all $k$, since $a_k>0$. But $h_{\ell, a, s}=Id$ for all 
$(a, s)\in Constr_\ell$, by Proposition \ref{propoinc}. The contradiction thus obtained proves 
the claim. 
\end{proof}

System of identities  (\ref{h'=0}) for all $\theta_0$ is equivalent to the system of equalities
\begin{equation} \int_0^{2\pi}\cos(\ell\tau+y\sin\tau)d\tau=0; \ \int_0^{2\pi}\sin(\ell\tau+y\sin\tau)d\tau=0,\label{cossin}\end{equation} 
by (\ref{frach}) and cosine addition formula. 
The second identity in (\ref{cossin}) holds automatically, since the function $\sin(x)$ is odd and $2\pi$-periodic. 
The first integral in (\ref{cossin}) is equal to 
\begin{equation} \int_0^{2\pi}\cos(\ell\tau+y\sin\tau)d\tau= 2\int_0^{\pi}\cos(\ell\tau+y\sin\tau)d\tau=
2\pi J_\ell(-y),\label{besseld}\end{equation} 
see (\ref{besself}), since the function $\cos(x)$ is even  and $2\pi$-periodic. 
One has $J_\ell(-x)=(-1)^\ell J(x)$, see \cite[section 2.1, formula (2)]{watson}. Thus, (\ref{h'=0}) is equivalent to the 
equality $J_\ell(y)=0$. This together with Claim 1 proves Proposition \ref{necess}.
\end{proof}

\begin{proposition} \label{pgreg} Let $y\in\rr_+$ be such that $\beta=(0, y)$ is an accumulation point of the submanifold $Constr_\ell$. Then the germ at $\beta$ of the subset $\widehat{Constr}_\ell$ is a germ of regular curve orthogonal to the $s$-axis. 
\end{proposition}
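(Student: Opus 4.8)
The plan is to realize the constriction set near $\beta$ as the zero locus of a single scalar analytic function --- one Fourier coefficient of $h-Id$ --- and then to upgrade this to the full system of conditions $h=Id$ by a one-variable analyticity argument that exploits the accumulation hypothesis. First I would record the factorization. By Proposition \ref{panal} the map $h_{\ell,a,s}(\theta_0)$ is analytic in $(\theta_0,a,s)$, and by (\ref{theta0}) it reduces to the identity on $\{a=0\}$; hence $h_{\ell,a,s}(\theta_0)-\theta_0=a\,\Phi(\theta_0,a,s)$ with $\Phi$ analytic, and by (\ref{frach})--(\ref{besseld})
\[
\Phi(\theta_0,0,s)=\int_0^{2\pi}\cos(\theta_0+\ell\tau+s\sin\tau)\,d\tau=2\pi J_\ell(-s)\cos\theta_0 .
\]
By Proposition \ref{propoinc} a point with $a\neq0$ lies in $\mcm_\ell$ if and only if $\Phi(\cdot,a,s)\equiv0$. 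I then introduce the cosine Fourier coefficient $C(a,s):=\frac1\pi\int_0^{2\pi}\Phi(\theta_0,a,s)\cos\theta_0\,d\theta_0$, an analytic function with $C(0,s)=2\pi J_\ell(-s)$. Since $\beta=(0,y)$ is an accumulation point, Proposition \ref{necess} gives $J_\ell(y)=0$, so $C(0,y)=0$; as all positive zeros of $J_\ell$ are simple, $\partial_s C(0,y)=\pm2\pi J_\ell'(y)\neq0$. The implicit function theorem then presents $\Gamma:=\{C=0\}$ near $\beta$ as a regular analytic curve $s=\phi(a)$ with $\phi(0)=y$.

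Next I would extract orthogonality from the symmetry $(\theta,a)\mapsto(\theta+\pi,-a)$ of equation (\ref{jostor}), which yields $h_{\ell,-a,s}(\theta_0+\pi)=h_{\ell,a,s}(\theta_0)+\pi$, hence $\Phi(\theta_0+\pi,-a,s)=-\Phi(\theta_0,a,s)$ and, after the substitution $\theta_0\mapsto\theta_0+\pi$ in the defining integral, $C(-a,s)=C(a,s)$. Thus $C$ is even in $a$, so $\partial_a C(0,y)=0$ and $\phi'(0)=-\partial_aC/\partial_sC|_\beta=0$: the curve $\Gamma$ is tangent to the $a$-axis, i.e. orthogonal to the $s$-axis. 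The same symmetry shows $\mcm_\ell$ is invariant under $(a,s)\mapsto(-a,s)$, so near $\beta$ the set $\widehat{Constr}_\ell$ coincides with $\overline{\mcm_\ell\cap\{a\neq0\}}$.

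It remains to identify $\widehat{Constr}_\ell$ with $\Gamma$ near $\beta$. One inclusion is immediate: on $\mcm_\ell\cap\{a\neq0\}$ one has $\Phi\equiv0$, hence $C=0$, so $\overline{Constr_\ell}\subset\Gamma$ near $\beta$. For the reverse inclusion I would restrict to $\Gamma$: for each fixed $\theta_0$ the function $a\mapsto\Phi(\theta_0,a,\phi(a))$ is analytic in the single variable $a$. The accumulation hypothesis provides points of $Constr_\ell\subset\Gamma$ whose $a$-coordinates cluster at $0$ and at which this function vanishes; an analytic function of one variable vanishing on a set clustering at $0$ vanishes identically near $0$. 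Hence $\Phi(\theta_0,a,\phi(a))\equiv0$ for every $\theta_0$ and all small $a$, so $\Gamma\subset\mcm_\ell$. For small $a>0$ this lies in $\rr^2_+$, hence in $Constr_\ell$; together with the reflection this gives $\Gamma\subset\widehat{Constr}_\ell$, and therefore equality. Consequently $\widehat{Constr}_\ell$ is, near $\beta$, the regular curve $\Gamma$ orthogonal to the $s$-axis.

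The main obstacle is precisely this last passage from the single scalar equation $C=0$ to the full (a priori infinite) system $\Phi\equiv0$: I must rule out that $\overline{Constr_\ell}$ is a proper, possibly singular or lower-dimensional, subset of $\Gamma$. The device that resolves it is the reduction to one variable along $\Gamma$ combined with the accumulation hypothesis, which forces every Fourier mode of $\Phi$ to vanish identically along $\Gamma$; the simplicity of the Bessel zero (guaranteeing $\partial_sC(0,y)\neq0$) and the reflection symmetry (giving evenness of $C$, hence orthogonality) are the two auxiliary inputs.
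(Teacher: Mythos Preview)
Your proof is correct and follows the same architecture as the paper's: factor $h-\mathrm{Id}=a\,\Phi$, cut out a regular curve near $\beta$ by a single scalar equation using the implicit function theorem, exploit an $a\mapsto -a$ symmetry for orthogonality, and then upgrade from the scalar equation to the full system $\Phi\equiv0$. The implementation differs in three places, each a genuine simplification or alternative. First, for the scalar equation the paper fixes a generic $\theta_0\notin\frac\pi2+\pi\zz$ and uses $\xi(a,s;\theta_0)=0$, whereas you use the cosine Fourier coefficient $C(a,s)$; the two cut out the same germ, but your choice avoids singling out a value of $\theta_0$. Second, to obtain $\partial_sC(0,y)\neq0$ you simply differentiate the explicit formula $C(0,s)=2\pi J_\ell(-s)$ and invoke simplicity of Bessel zeros; the paper instead computes $\frac{\partial^2\Delta}{\partial a\partial s}(0,y;\theta_0)$ via a second-order equation in variations and the identity $J_{\ell+1}-J_{\ell-1}=-2J'_\ell$ (its Proposition~\ref{propneq}), which yields the same conclusion by a longer route. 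Third, and most substantively, for the passage from $\{C=0\}$ to $\{\Phi\equiv0\}$ the paper invokes the prior result (from \cite{bibgl}) that $Constr_\ell$ is already a one-dimensional analytic submanifold, so its germ, being contained in the regular curve $\gamma$, must coincide with it; you instead parametrize $\Gamma$ by $a$ and use the one-variable identity theorem together with the accumulation hypothesis, which makes this step self-contained. Finally, for the $a\mapsto -a$ symmetry the paper uses time reversal ($\omega\to-\omega$), while you use the internal symmetry $(\theta,a)\mapsto(\theta+\pi,-a)$ of (\ref{jostor}); both are valid and give the same invariance.
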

\begin{proof} The submanifold $Constr_\ell$ consists of those $(a,s)\in\rr_+^2$ 
for which the difference $\Delta(a, s;\theta):=h_{\ell, a, s}(\theta)-\theta$ vanishes identically 
in $\theta\in\rr$. The Taylor series in $(a, s)$ of  the latter difference 
 is divisible by $a$, since it vanishes for $a=0$. Therefore, the function 
 $$\xi(a, s;\theta)=a^{-1}\Delta(a, s;\theta)$$ 
 is analytic. One has $J_\ell(y)=0$ (Claim 1) and 
\begin{equation}\xi(0, y;\theta_0)=0 \text{ for every } \theta_0,\label{xi0}\end{equation}
 by  the equality $\frac{\partial}{\partial a}\Delta(0, y;\theta_0)=0$, see (\ref{h'=0}). 
 
\begin{proposition} \label{propneq} For every $y>0$ such that $J_\ell(y)=0$ 
one has  
\begin{equation}\frac{\partial\xi}{\partial s}(0, y;\theta_0)\neq0 \ \ \text{ for  every }  \ 
 \theta_0\notin\frac{\pi}2+\pi\zz.\label{xineq}\end{equation}
 \end{proposition}
 \begin{proof} Inequality (\ref{xineq})  is equivalent to the inequality 
 \begin{equation}\frac{\partial^2\Delta}{\partial a\partial s}(0, y;\theta_0)\neq0 \ \ \text{ for  } \ 
 \theta_0\notin\frac{\pi}2+\pi\zz.\label{deltneq}\end{equation}
 Let us prove (\ref{deltneq}). Equations in variations on the similar derivative of 
  a solution $\theta(\tau,\theta_0, a, s)$ is obtained by 
  differentiation of equation (\ref{difeta}) 
  in $s$ and subsequent substitution $a=0$ and $\theta'_s|_{a=0}=\sin\tau$ (which 
  is also found from the equation in variation on $\theta'_s$):
$$\frac{d}{d\tau}\frac{\partial^2\theta}{\partial a\partial s}=
-\sin\theta \theta'_s
=-\sin(\theta_0+\ell\tau+s\sin\tau)\sin\tau$$
$$=\frac12(\cos(\theta_0+(\ell+1)\tau+s\sin\tau)-
\cos(\theta_0+(\ell-1)\tau+s\sin\tau)).$$
Integrating the latter differential equation with zero initial condition at $\tau=0$ and substituting 
$\tau=2\pi$ yields
$$\frac{\partial^2\theta}{\partial a\partial s}(2\pi,\theta_0, 0, y)=\frac{\partial^2\Delta}{\partial a\partial s}$$
\begin{equation}=\frac12\int_0^{2\pi}(\cos(\theta_0+(\ell+1)\tau+y\sin\tau)-
\cos(\theta_0+(\ell-1)\tau+y\sin\tau))d\tau$$
$$=\pi\cos\theta_0(J_{\ell+1}(-y)-J_{\ell-1}(-y)),\label{besseld}
\end{equation}
by (\ref{besself}), the addition formula for cosine and the second equality in (\ref{cossin}) 
(which holds for every integer  number $\ell$). 
The Bessel function difference in the 
latter right-hand side is equal to $-2J'_\ell(-y)$, by well-known  formula, see \cite[p. 17, formula (2)]{watson}. The latter derivative is non-zero, since $J_\ell$ is a solution of the second order differential equation (Bessel equation) and $J_\ell(-y)=(-1)^\ell J_\ell(y)=0$. Finally, the right-hand side in 
(\ref{besseld}) is non-zero. This together with (\ref{besseld}) implies (\ref{deltneq}). Proposition 
\ref{propneq} is proved.
\end{proof}

The analytic subset $\mcm_\ell\subset\rr^2$ 
is symmetric with respect to the $s$-axis. This follows from the fact that changing sing of 
$\omega$ is equivalent to time reversing in dynamical system (\ref{josvec}), 
which does not change triviality of the Poincar\'e map (the time $2\pi$ flow map  
of the zero fiber $\{0\}\times S^1_\theta$). 
The subset $\mcm_\ell\setminus\{ a=0\}\subset\rr^2_{a,s}$ coincides with the union of 
the subset $Constr_\ell$ and its image under the symmetry with respect to the $s$-axis. 
Both latter subsets are contained in the analytic subset 
$\mathcal A_{\theta_0}:=\{ \xi(a,s;\theta_0)=0\}$ for every fixed 
$\theta_0\in\rr$. For $\theta_0\notin\frac\pi2+\pi\zz$, the germ of the subset 
$\mathcal A_{\theta_0}$ at an accumulation point $\beta=(0, y)$  of the set $Constr_\ell$ is a germ $(\gamma, \beta)$ of regular 
analytic curve transversal to the $s$-axis, since $J_{\ell}(y)=0$ (Proposition \ref{necess}) and by 
Proposition \ref{propneq}. It is orthogonal to the $s$-axis, by symmetry. 
The punctured germ $\gamma\setminus\{\beta\}$ 
 coincides with the germ at $\beta$ of the set  $\mcm_\ell\setminus\{ a=0\}$, since the latter is a one-dimensional submanifold in $\rr^2_{a,s}\subset\rr^2\setminus\{ a=0\}$ 
 whose germ at $\beta$ is contained in $\gamma$.  This implies the statements of 
 Proposition \ref{pgreg}.
\end{proof}

\begin{proof} {\bf of Lemma \ref{lacreg}.} The submanifold $Constr_\ell$ accumulates to 
no point of the $a$-axis. For non-accumulation to the origin  see \cite[lemma 4.14]{bibgl}. 
Non-accumulation to points $(a,0)$ with $a>0$ is proved in  \cite[pp. 5459--5460, Case (1)]{bibgl}. (Formally speaking, the argument in loc. cit. proves non-accumulation to $(a,0)$ 
of sequence of points lying in the same component in $Constr_\ell$. But in fact, the argument is valid for every sequence in $Constr_\ell$.)   The germs of the set 
$\mcm_\ell\setminus\{ a=0\}$  at  points of its accumulation to 
 the $s$-axis  coincide with those of  the   analytic subset $\mathcal A_{\theta_0}$, 
 $\theta_0\notin\frac{\pi}2+\pi\zz$, punctured at the latter accumulation points: see the above proof of  Proposition \ref{pgreg}. They 
are regular at the base (accumulation) points, as is $\mathcal A_{\theta_0}$, which 
was proved above. 
The intersection of the set 
$\mcm_\ell\setminus\{ a=0\}$ with the positive quadrant coincides with $Constr_\ell$. 
This implies that the union of the closure 
$\overline{Constr_\ell}$ and its symmetric image with respect to the $s$-axis is an 
analytic submanifold in $\rr^2$.  Its orthogonality to the $s$-axis at their intersection points is 
obvious. The coordinate $a$ is unbounded from above on each connected component of the submanifold $Constr_\ell\subset\rr_+^2$, by 
\cite[theorem 1.12]{bibgl}. This implies that each component landing at some point in the $s$-axis 
is projected to the whole $a$-axis $\rr_+$ and cannot  start and end at points of the $s$-axis. This implies  Statements 3) and 4) of Lemma \ref{lacreg} and finishes the proof of the lemma.
\end{proof}

\subsection{Bessel type solutions of Painlev\'e 3 equation} 

\begin{theorem} \label{bepainl} \cite[Section 10, Chapter 12]{conte} For every $\ell\in\rr$ 
the corresponding  Painlev\'e 3 equation   (\ref{p3}) 
  admits a one-parameter family of solutions of type 
 $$w(s)=\frac{\frac{du(s)}{ds}}{u(s)},  \ \  \ u(s)=s^\ell(C_1J_{\ell}(s)+C_2Y_\ell(s)).$$
 Here $J_\ell(s)$ and $Y_\ell(s)$ are  linearly independent solutions of Bessel equation 
\begin{equation}\frac{d^2v}{ds^2}+\frac1s\frac{dv}{ds}+(1-\frac{\ell^2}{s^2})v=0.\label{besseq}
\end{equation}
 Namely, $J_\ell$ is the $\ell$-th Bessel function 
 $$J_\ell(x)=\sum_{p=0}^{\infty}\frac{(-1)^p}{\Gamma(p+1)\Gamma(p+\ell+1)}
 \left(\frac x2\right)^{2p+\ell}.$$
 (It is well-known that for $\ell\in\zz$, $J_\ell$ is also given by formula (\ref{besself}).) 
\end{theorem}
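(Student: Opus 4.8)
The plan is to use the classical \emph{linearization} behind the Bessel-type solutions: the substitution $w=u'/u$ converts a second-order \emph{linear} equation for $u$ into a Riccati equation for $w$, and I then check directly that this Riccati equation is a reduction of the Painlev\'e 3 equation (\ref{p3}). First I would put $u(s)=s^\ell v(s)$ and compute
\begin{equation}
w=\frac{u'}{u}=\frac\ell s+\frac{v'}{v}.
\end{equation}
Setting $g:=v'/v$, so that $v''/v=g'+g^2$, and dividing the Bessel equation (\ref{besseq}) by $v$, I obtain the Riccati equation $g'=-g^2-\frac gs-1+\frac{\ell^2}{s^2}$. Substituting $g=w-\ell/s$ and $g'=w'+\ell/s^2$, the $s^{-2}$ terms cancel and I am left with
\begin{equation}
w'=-w^2+\frac{2\ell-1}{s}\,w-1.\label{ricw}
\end{equation}
This reduction is the heart of the argument; every Bessel-type $w$ satisfies (\ref{ricw}).

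The second step is to verify that, conversely, every solution of the Riccati equation (\ref{ricw}) solves the Painlev\'e 3 equation (\ref{p3}). Differentiating (\ref{ricw}) gives
\begin{equation}
w''=-2ww'+\frac{2\ell-1}{s}\,w'-\frac{2\ell-1}{s^2}\,w,
\end{equation}
and I would then eliminate $w'$ on both sides by means of (\ref{ricw}). Expanding $(w')^2/w$ with the help of (\ref{ricw}) reproduces exactly the cubic term $w^3$ and the simple-pole term $1/w$ of (\ref{p3}); in particular the two $1/w$ contributions cancel. Matching the coefficients of $w^3$, of $w^2/s$, of $w/s^2$ and $w$, and of the $1/s$ term on the two sides, and using $2\ell-1$ consistently, one checks group by group that the differentiated Riccati equation coincides with (\ref{p3}). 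I expect this term-by-term bookkeeping to be the only laborious part of the proof; it is purely mechanical and raises no conceptual difficulty.

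Finally, for the stated parameter count I would note that the solutions $v=C_1J_\ell(s)+C_2Y_\ell(s)$ of (\ref{besseq}) form a two-dimensional linear space, whereas $w=u'/u=\frac\ell s+\frac{v'}{v}$ is invariant under $v\mapsto\la v$. Hence $w$ depends only on the projective class $[C_1:C_2]\in\cp^1$, so the Bessel-type solutions genuinely form a one-parameter family, as asserted. This is precisely the projectivization of the $\ell$-th Bessel equation that appears as the restricted line field on the exceptional divisor $\Sigma$ in the blow-up of Subsection 1.6, which is the conceptual reason the reduction (\ref{ricw}) exists in the first place.
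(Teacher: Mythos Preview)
Your proposal is correct. In the paper, Theorem \ref{bepainl} is quoted as a known result from \cite{conte} and not given a self-contained proof; the surrounding Proposition \ref{ppric} derives exactly the same Riccati reduction $w'=-w^2+\frac{2\ell-1}{s}w-1$ via the substitution $u=s^\ell v$ that you carry out (the paper works directly with $u$ rather than through the intermediate $g=v'/v$, but this is cosmetic). The one substantive difference is the final step: you verify by differentiating the Riccati equation and matching terms that every Riccati solution satisfies (\ref{p3}), whereas the paper closes the loop by appealing to the cited Theorem \ref{bepainl} itself, and later (in the proof of Proposition \ref{blowric}) also offers an alternative limit argument---passing to the limit of genuine Painlev\'e 3 solutions $\frac{a(s)}{2s\chi(s)}$ as the initial condition $(\chi_0,a_0)$ tends to the exceptional divisor. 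Your direct verification is more elementary and self-contained; the limit argument has the advantage of explaining conceptually why the reduction exists, which is exactly the point you make in your last sentence about the blow-up.
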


\begin{proposition} \label{ppric} A function $v(s)$ satisfies Bessel equation (\ref{besseq}), 
if and only if the function $u(s)=s^\ell v(s)$ satisfies the equation
\begin{equation}u''+\frac{1-2\ell}{s}u'+u=0\label{bsu}\end{equation}
In this case the function 
\begin{equation} w(s)=\frac{\frac{du(s)}{ds}}{u(s)}\label{wsdef}\end{equation}
satisfies the Riccati equation 
\begin{equation}w'=\frac{2\ell-1}sw-w^2-1.\label{ricw}\end{equation}
Each solution of (\ref{ricw}) satisfies Painlev\'e 3 equation (\ref{p3}). 
\end{proposition}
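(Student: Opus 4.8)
The plan is to verify the three chained equivalences and the final implication by direct substitution, working from the innermost change of variables outward. The statement has three parts: (a) $v$ solves Bessel \eqref{besseq} iff $u=s^\ell v$ solves \eqref{bsu}; (b) in that case $w=u'/u$ solves the Riccati equation \eqref{ricw}; and (c) every solution of \eqref{ricw} solves Painlev\'e 3 \eqref{p3}. Each part is a computation, so I would organize it to minimize algebra and reuse intermediate derivatives.

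For part (a), I would substitute $v=s^{-\ell}u$ into \eqref{besseq}. Writing $v'=s^{-\ell}u'-\ell s^{-\ell-1}u$ and $v''=s^{-\ell}u''-2\ell s^{-\ell-1}u'+\ell(\ell+1)s^{-\ell-2}u$, I plug these into $v''+\tfrac1s v'+(1-\tfrac{\ell^2}{s^2})v=0$, factor out the common $s^{-\ell}$, and collect terms by powers of $s$. The $u$-coefficient should telescope: the $\ell(\ell+1)$, $-\ell$ (from the $\tfrac1s v'$ term), and $-\ell^2$ contributions cancel against each other, leaving exactly \eqref{bsu}. Since the map $v\mapsto s^\ell v$ is invertible on $s>0$, the equivalence is immediate once one direction is checked.

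For part (b), set $w=u'/u$, so $u'=wu$ and $u''=(w'+w^2)u$. Dividing \eqref{bsu} by $u$ gives $w'+w^2+\tfrac{1-2\ell}{s}w+1=0$, which rearranges directly to \eqref{ricw}. For part (c), I would differentiate the Riccati equation \eqref{ricw} once to produce $w''$ in terms of $w'$, $w$, and $s$, then substitute the expression for $w'$ from \eqref{ricw} back in to eliminate all first-order occurrences, expressing $w''$ purely in $w$ and $s$. The target is to match this against the right-hand side of \eqref{p3}, namely $\tfrac{(w')^2}{w}-\tfrac{w'}{s}-2\ell\tfrac{w^2}{s}+(2\ell-2)\tfrac1s+w^3-\tfrac1w$; here I would also need to expand $\tfrac{(w')^2}{w}$ using $w'=\tfrac{2\ell-1}{s}w-w^2-1$, which introduces the $-\tfrac1w$ and $w^3$ terms and the mixed cross terms.

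The main obstacle is purely bookkeeping in part (c): the term $\tfrac{(w')^2}{w}$ in \eqref{p3} means that after squaring $w'$ and dividing by $w$ one gets a Laurent expression in $w$ (including the $w^{-1}$ and the constant and $w$-linear pieces), and one must confirm that every monomial — the $w^3$, the $\tfrac{w^2}{s}$, the $\tfrac1s$, and the $\tfrac1w$ — matches with the correct coefficient, including the precise appearance of $\ell$ and the $2\ell-2$ constant. I expect the verification to hinge on the identity $\left(\tfrac{2\ell-1}{s}\right)^2$ producing the $\tfrac{(2\ell-1)^2}{s^2}$ piece that must be absorbed correctly; a clean way to avoid sign errors is to treat $w$ as $u'/u$ throughout and check \eqref{p3} as an identity in $u$, $u'$, $u''$ using \eqref{bsu} to eliminate $u''$, rather than manipulating the Riccati equation abstractly. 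Since the statement merely asserts that solutions of \eqref{ricw} satisfy \eqref{p3}, it suffices to carry out this single substitution, and no converse or existence argument is required.
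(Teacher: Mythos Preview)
Your proposal is correct, and parts (a) and (b) match the paper's proof essentially verbatim. The only divergence is in part (c). You propose to differentiate the Riccati equation \eqref{ricw}, substitute $w'$ back, expand $(w')^2/w$, and match term by term against \eqref{p3}; this works (the $w^3$, $w^{-1}$, $w^2/s$, $1/s$, and $w/s^2$ coefficients all balance). The paper instead argues structurally: since \eqref{ricw} is the projectivization of the linear second-order equation \eqref{bsu}, every solution of \eqref{ricw} is of the form $w=u'/u$ for some $u$ solving \eqref{bsu}, hence $u=s^\ell v$ with $v$ a Bessel solution, and then Theorem \ref{bepainl} (cited from \cite{conte}) gives the Painlev\'e 3 conclusion directly. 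Your route is fully self-contained and avoids invoking the external theorem, at the cost of a somewhat tedious polynomial identity; the paper's route is shorter on the page but outsources the actual verification to the reference. Either is acceptable here; your suggestion at the end to check \eqref{p3} as an identity in $u,u',u''$ with $u''$ eliminated via \eqref{bsu} is a good way to keep the computation manageable.
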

\begin{proof} Writing $v=us^{-\ell}$ and substituting this to 
Bessel equation (\ref{besseq}) yields 
$$s^{-\ell}u''-2\ell s^{-\ell-1}u'+\ell(\ell+1)s^{-\ell-2}u+\frac1s(s^{-\ell}u'-\ell s^{-\ell-1}u)+
(1-\frac{\ell^2}{s^2})s^{-\ell}u=0,$$
which is equivalent to (\ref{bsu}). Differentiating (\ref{wsdef}) and substituting (\ref{bsu}) yields
$$w'(s)=-w^2+\frac{u''(s)}{u(s)}=-w^2+\frac{2\ell-1}sw(s)-1.$$ 
Thus, Riccati equation (\ref{ricw}) is the projectivization of a linear system equivalent to 
(\ref{bsu}). This implies that each its solution is of type (\ref{wsdef}) with $u(s)=s^\ell v(s)$, 
where $v$ satisfies Bessel equation (\ref{besseq}). Hence, each its solution is a 
solution of Painlev\'e 3 equation (\ref{p3}), by 
Theorem \ref{bepainl}. 
 \end{proof}

\subsection{Blow up and domain of the Poincar\'e map $\mcp$}

\begin{convention}
We will be dealing with the plane $\cc^2_{\chi,a}$ blown up at the origin. The blown up plane will be denoted by $\wh\cc^2$.  The exceptional divisor 
(the pasted projective line) is naturally identified with the projectivization of the space 
$\cc^2_{\chi,a}$: the projective line $\cp^1$ equipped 
with homogeneous coordinates $[\chi:a]$. Since formally speaking, the functions 
$\chi$ and $a$ on $\wh\cc^2$ vanish on the exceptional 
divisor  $\cp^1\subset\wh\cc^2$, we rename its canonical homogeneous 
coordinates $[\chi:a]$ by $[y_1:y_2]$, in order to avoid confusion. Similarly, the space 
$\cc^4_{\ell,\chi,a,s}$ blown up along the $(\ell,s)$-space $\{\chi=a=0\}$ will be denoted by 
$\wh\cc^4$. Its exceptional divisor will be denoted by  
$$\Sigma:=\cp^1_{[y_1:y_2]}\times\cc^2_{\ell,s}.$$
\end{convention}

\begin{proposition} \label{blowric} Let $\Pi\subset\wh\cc^4$ denote the strict transform of the 
hyperplane $\{ \chi=0\}$ (which corresponds to complexified model of Josephson  junction) 
under the above blow up. 

1) System (\ref{isomnews}) lifts to a well-defined line field $\mathcal V$ on 
$$\wh\cc^{4,0}:=\wh\cc^4\setminus\{ s=0\}:$$ 
a non-autonomous ordinary differential equation  on 
$\cc_\ell\times\wh\cc^2$-valued function in $s$ with constant $\ell$-component. The exceptional  divisor $\Sigma$ is tangent to the line field $\mathcal V$. Its intersection with $\Pi$ is the plane 
 $\cc^2_{\ell,s}=\{[0:1]\}\times\cc^2_{\ell,s}$.

2) The restriction  to $\Sigma$ of the line field $\mathcal V$ is given by the Riccati equation 
on the function $\Psi(s)=\frac{y_2(s)}{y_1(s)}$ that is the projectivization of linear system 
 \begin{equation}\left(\begin{matrix} \dot y_1\\ \dot y_2\end{matrix}\right)=\left(\begin{matrix} 
-\frac\ell s & \frac1{2s}\\ -2s & \frac\ell s\end{matrix}\right)\left(\begin{matrix} y_1 \\ y_2\end{matrix}
\right).\label{matrex}\end{equation}
Namely,  $\Psi(s)$ is a solution of the above-mentioned Riccati equation, if and only if 
$\Psi(s)=\frac{y_2(s)}{y_1(s)}$  where $y=(y_1(s),y_2(s))$ is a solution of 
system (\ref{matrex}).  

3) Along each solution $y(s)$ of  system (\ref{matrex}) the corresponding function 
\begin{equation}w(s):=\frac{y_2(s)}{2sy_1(s)}\label{wy12}\end{equation}
satisfies Riccati equation (\ref{ricw}) and   Painlev\'e 3 equation (\ref{p3}).

 4) The  line field $\mathcal V$ restricted to $\Sigma$ is 
 transversal to the hypersurface $\Pi$ at their intersection points. 
 \end{proposition}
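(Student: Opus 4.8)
The plan is to reduce Statement 4 to a one-line computation performed in the affine chart of the exceptional divisor $\Sigma$ adapted to the intersection locus $\Pi\cap\Sigma$. By Statement 1, this locus is $\{[y_1:y_2]=[0:1]\}\times\cc^2_{\ell,s}$, which in terms of the Riccati variable $\Psi=y_2/y_1$ of Statement 2 sits at $\Psi=\infty$. Accordingly I would work in the chart $\eta:=y_1/y_2=\Psi^{-1}$ of $\cp^1_{[y_1:y_2]}$, in which the point $[0:1]$ becomes $\eta=0$, so that $\Pi\cap\Sigma=\{\eta=0\}$ and $(\eta,\ell,s)$ serve as coordinates on $\Sigma$ near this locus.

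First I would write down the differential equation satisfied by $\eta$ along the restricted line field $\mcv|_\Sigma$. Since $\ell$ is constant and $s$ is the independent variable along the leaves, the restricted line field is spanned by $\frac{\partial}{\partial s}+\dot\eta\,\frac{\partial}{\partial\eta}$, where $\dot\eta$ is computed from system (\ref{matrex}) through $\dot\eta=(\dot y_1 y_2-y_1\dot y_2)/y_2^2$. Substituting the matrix entries of (\ref{matrex}) yields
$$\dot\eta=\frac1{2s}-\frac{2\ell}{s}\,\eta+2s\,\eta^2,$$
whose constant term is exactly the upper-right matrix entry $\frac1{2s}$ (as it must be, since at $\eta=0$ one has $\dot\eta=\dot y_1/y_2$ evaluated at $y_1=0$).

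The conclusion is then immediate. At any point of $\Pi\cap\Sigma$ we have $\eta=0$, hence $\dot\eta=\frac1{2s}\neq0$, because $s\neq0$ on $\wh\cc^{4,0}$. The tangent space to the hypersurface $\Pi\cap\Sigma$ inside $\Sigma$ is spanned by $\frac{\partial}{\partial\ell}$ and $\frac{\partial}{\partial s}$, i.e. it is cut out by $d\eta=0$; since the line field direction carries a nonvanishing $\frac{\partial}{\partial\eta}$-component, it cannot lie in this tangent space, which is precisely transversality of $\mcv|_\Sigma$ to $\Pi$ at their intersection points.

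The only genuine subtlety, and hence the point deserving care, is the choice of chart: the intersection $\Pi\cap\Sigma$ lies at $\Psi=\infty$, so the transversality is invisible in the $\Psi$-chart of Statement 2 and one must pass to $\eta=\Psi^{-1}$. Once this is done, transversality is controlled entirely by the off-diagonal entry $\frac1{2s}$ of the Bessel-type matrix (\ref{matrex}), which is nonzero exactly because we work on $\wh\cc^{4,0}$, away from $\{s=0\}$.
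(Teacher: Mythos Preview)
Your proof of Statement 4 is correct and is essentially a fleshed-out version of the paper's argument: the paper simply asserts that transversality ``follows immediately from the fact that linear system (\ref{matrex}) has matrix with off-diagonal elements being non-zero,'' whereas you make this explicit by passing to the affine chart $\eta=y_1/y_2$ and observing that the constant term $\frac{1}{2s}$ of the resulting Riccati equation---precisely the upper-right off-diagonal entry---is nonzero on $\wh\cc^{4,0}$. Your remark that one must use the $\eta$-chart rather than the $\Psi$-chart is a useful clarification the paper omits.
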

 
 \begin{proof} System (\ref{isomnews}) considered as a differential equation 
 on vector function $(\chi(s),a(s))$
  with constant parameter $\ell$  has right-hand side that 
  vanished for $\chi=a=0$. This implies that system (\ref{isomnews}) lifts to a 
  holomorphic line field $\mathcal V$  on $\wh\cc^4$ that  
  is tangent to the exceptional  divisor 
  $\Sigma=\{\chi=a=0\}=\cp^1_{[y_1:y_2]}\times\cc^2_{\ell,s}$. Its restriction to $\Sigma$ is 
  given by  the projectivized linear part in $(\chi,a)$ of the right-hand side of (\ref{isomnews}): linear system 
  (\ref{matrex}), whose matrix in the right-hand side coincides with the Jacobian matrix in $(\chi,a)$ 
 at $(0,0,s)$ of the $(\chi,a)$-component of system (\ref{isomnews}).  The ratio $w(s)$, see (\ref{wy12}), satisfies Riccati equation (\ref{ricw}): 
    $$w'=-\frac1sw+\frac{(-2sy_1+\frac\ell sy_2)y_1-y_2(-\frac\ell sy_1+\frac1{2s}y_2)}{2sy_1^2}=
\frac{2\ell-1}sw-1-w^2.$$
Painlev\'e 3 equation (\ref{p3}) on  $w(s)$ can be proved in two possible ways:

a) It follows from  Proposition \ref{ppric}.
 
b) Take a solution $(\ell, \chi(s), a(s))$ of system (\ref{isomnews}) with the initial condition 
 $(\ell,\chi_0,a_0)$ at $s=s_0$. The corresponding solution $\frac{a(s)}{2s\chi(s)}$ of 
 Painlev\'e 3 equation (\ref{p3}) converges to $w(s)$,  as $(\chi_0,a_0)\to(0,0)$ so that 
 $\frac{a_0}{2s_0\chi_0}\to w_0:=w(s_0)$. Therefore, the limit $w(s)$ also satisfies Painlev\'e 3 equation (\ref{p3}).
 
  Statements 1) -- 3) of 
 Proposition \ref{blowric} are proved. Transversality statement 3) follows 
 immediately from the fact that linear system (\ref{matrex}) has matrix with off-diagonal 
 elements being non-zero  for every $s>0$. 
 \end{proof}

  Consider the standard vector field directing $\mcv$: 
 
  \begin{equation}\begin{cases}\ell'_s=0\\ \chi'=\frac{a-2\chi(\ell+2\chi a)}{2s}\\
  a'=-2s\chi+\frac as(\ell+2\chi a)\\
 s'=1\end{cases}.\label{isomn*}\end{equation}  
 
  Set 
  $$\Pi_+:=\text{ the real part of } \Pi\cap\{ s>0\},  \ \ X_+:=\Pi_+\cap\Sigma.$$ 
 
 \begin{proposition} \label{poincmap} 
  1) For every initial condition  $x_0=[0:1]\times(\ell, s_0)\in X_+$ the real forward orbit of field (\ref{isomn*}) 
 starting at $x_0$ meets $X_+$ at some  point $x_1=[0:1]\times(\ell, s_1)$, $s_1>s_0$ (take $x_1$ to be the first  point  of return to $X_+$). 
 
 2) The same holds for 
 every  $x_0\in \Pi_+$ lying in some neighborhood of $X_+$, and the 
 map $x_0\mapsto x_1$ is a well-defined germ of analytic Poincar\'e first return 
 map $\mcp:(\Pi_+,X_+)\mapsto (\Pi_+,X_+)$.

 3) In the case, when the above $x_0$, $x_1$ lie in $X_+$, one has $x_1=\mcp(x_0)$, if and 
 only if $0<s_0<s_1$ and $s_0$, $s_1$ are neighbor zeros of a solution of 
    Bessel equation (\ref{besseq}). 
 \end{proposition}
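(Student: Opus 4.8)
The plan is to reduce everything to the classical oscillation theory of the Bessel equation, via the observation that the first coordinate $y_1(s)$ of the linear system (\ref{matrex}) is itself a solution of the $\ell$-th Bessel equation (\ref{besseq}). First I would record that inside $\Sigma$ the intersection $\Pi\cap\Sigma=\{[0:1]\}\times\cc^2_{\ell,s}$ is cut out by $\{y_1=0\}$, so that $X_+$ consists exactly of the points $[0:1]\times(\ell,s_0)$, $s_0>0$, at which a solution of (\ref{matrex}) has $y_1(s_0)=0$. Eliminating $y_2$ from (\ref{matrex}) — solve the first row for $y_2=2s\dot y_1+2\ell y_1$, differentiate, and substitute into the second row — yields after simplification precisely $\ddot y_1+\frac1s\dot y_1+(1-\frac{\ell^2}{s^2})y_1=0$, i.e. the Bessel equation (\ref{besseq}) with $v=y_1$. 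Since the coefficients of (\ref{matrex}) are holomorphic on $\{s\neq0\}$, each solution $(y_1,y_2)$ is defined for all $s\in(0,\infty)$, the projectivized orbit $[y_1(s):y_2(s)]$ is a globally defined curve in the invariant divisor $\Sigma$, and at a point of $X_+$ one has $\dot y_1(s_0)=\frac1{2s_0}y_2(s_0)\neq0$, so every zero of $y_1$ is simple and hence isolated.

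For Statements 1 and 3 I would then invoke oscillation. A nontrivial solution $y_1$ of (\ref{besseq}) has infinitely many positive zeros accumulating only at $+\infty$: for $s>|\ell|$ the coefficient $1-\ell^2/s^2$ is positive and tends to $1$, so Sturm comparison with a constant-coefficient equation $\ddot u+cu=0$, $0<c\le 1-\ell^2/s^2$, forces a zero of $y_1$ in every sufficiently long interval. As the zeros are simple they are isolated, so from any zero $s_0>0$ there is a well-defined next zero $s_1>s_0$. Because on $\Sigma$ one has $\Pi\cap\Sigma=\{y_1=0\}$, the real forward orbit of field (\ref{isomn*}) through $x_0\in X_+$ stays in $\Sigma$ and first returns to $X_+$ precisely at the next zero $s_1$ of $y_1$ (in $(s_0,s_1)$ one has $y_1\neq0$, so the orbit avoids $X_+$ there); this gives Statement 1, and reading the correspondence in both directions gives Statement 3, namely $x_1=\mcp(x_0)$ if and only if $0<s_0<s_1$ are neighbor zeros of the solution $y_1$ of (\ref{besseq}) determined by $x_0$.

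For Statement 2 the task is to promote the return map on $\Sigma$ to an analytic first-return map on a full neighborhood of $X_+$ in $\Pi_+$. The key input is transversality of $\mcv$ to $\Pi$ along $X_+$, which I would read off concretely in the blow-up chart $\chi=au$, where $\Pi=\{u=0\}$: along $X_+$ (where $a=0$) the $u$-component of $\mcv$ equals $\frac{\dot y_1}{y_2}=\frac1{2s}\neq0$, the nonvanishing off-diagonal entry of (\ref{matrex}) already used in Proposition \ref{blowric}(4). Since $\mcv$ is tangent to $\Sigma$ and $\frac1{2s}$ is a nonzero component transversal to $\Pi$, the field $\mcv$ is transversal to $\Pi$ at each point of $X_+$, and transversality persists on a neighborhood by openness. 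The reference orbit from $x_0\in X_+$ to $x_1=\mcp(x_0)$ lies in $\Sigma$ and meets $\Pi$ only at its two endpoints. Transversality at both endpoints, together with analytic dependence of solutions of (\ref{isomn*}) on the initial condition, then feeds directly into the standard flow-box construction of a Poincar\'e first-return map: for $x_0'\in\Pi_+$ close to $X_+$ the orbit stays uniformly close to the reference orbit on $[s_0,s_1]$, avoids $\Pi$ in the interior, and crosses $\Pi$ transversally near $x_1$, producing a well-defined analytic germ $\mcp\colon(\Pi_+,X_+)\to(\Pi_+,X_+)$.

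The main obstacle is less a deep difficulty than a matter of correct bookkeeping. The one genuine structural input is the identification of $y_1$ with a Bessel solution, after which Statements 1 and 3 are immediate from classical oscillation theory; the one place that demands care is the continuous-dependence argument for Statement 2, where I must verify that the reference orbit segment meets $\Pi$ only at its endpoints (so the first-return time is unambiguous) and that transversality at the return point $x_1$, not merely at the departure point $x_0$, guarantees the return map is analytic rather than merely defined.
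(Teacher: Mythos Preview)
Your proof is correct and follows essentially the same strategy as the paper: reduce the dynamics on $\Sigma$ to the Bessel equation, identify returns to $X_+$ with zeros of a Bessel solution, invoke Sturm-type oscillation theory for Statements 1) and 3), and use the transversality of Proposition \ref{blowric}(4) for Statement 2). The one cosmetic difference is that you eliminate $y_2$ from (\ref{matrex}) to see directly that $y_1$ solves (\ref{besseq}), whereas the paper routes through the Riccati form $w=u'/u$ with $u=s^\ell v$ (Proposition \ref{ppric}) and reads off that poles of $w$ are zeros of the Bessel solution $v$; your shortcut is slightly more direct but lands in the same place.
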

 \begin{proof} The flow of field (\ref{isomn*}) on $\Sigma$ is given by solutions $w(s)$ of 
 the Riccati equation (\ref{ricw}). Intersections of its orbit with $\Pi$ are poles of the solution 
 $w(s)$. Recall that  $w=\frac{u'}u$, see (\ref{wsdef}), where $u(s)=s^\ell v(s)$, $v(s)$ is 
 a solution of Bessel equation (\ref{besseq}). Therefore poles of $w$ are exactly 
 zeros of $v(s)$. The Bessel function $J_\ell(s)$ has infinite sequence of positive zeros. 
 For every other solution $v(s)$ of Bessel equation its zeros are intermittent with those of 
 $J_\ell$, by Sturm Separation Theorem. Therefore, each its solution $v$ has also infinite 
 sequence of positive zeros. The map sending one  zero of a solution 
 to the next one depends analytically on the parameter of solution, since no two zeros can collide: 
 a non-trivial solution of a second order differential equation cannot have multiple zeros. 
 This proves Statements 1) and 3). Statement 2) follows from transversality statement 4) of 
 Proposition \ref{blowric}.  \end{proof}
 
 \subsection{The Poincar\'e map on constriction curves. Proofs of Theorems \ref{cangerms} 
 and \ref{tcgerm}}
 
 \begin{proposition} \label{pkr} The submanifold $\widehat{Constr}_\ell\subset\rr^2$ 
 intersects the $s$-axis in at least one point (in fact, at an infinite number of points) 
 with positive ordinate.
  \end{proposition}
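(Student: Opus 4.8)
The plan is to show that $\widehat{Constr}_\ell$ meets the $s$-axis by combining the already-established regularity statements with the known Bessel asymptotics of the phase-lock area boundaries. The key idea is that the constrictions in $L_\ell(\omega)$, for $\omega$ large, must approach the $s$-axis, and that their limit ordinate must be a zero of $J_\ell$, which then forces a genuine accumulation point. First I would recall from statement 3) of the Introduction (the Klimenko--Romaskevich Bessel asymptotics \cite{RK}) that the two boundary functions $G_{\ell,0}$ and $G_{\ell,\pi}$ of the planar area $L_\ell(\omega)$ satisfy $G_{\ell,0}(A)=\ell\omega-J_\ell(-A/\omega)+O(\ln|A|/A)$ and $G_{\ell,\pi}(A)=\ell\omega+J_\ell(-A/\omega)+O(\ln|A|/A)$. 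The two boundary curves cross (forming a constriction) precisely where these two graphs meet, i.e. where the Bessel term $J_\ell(-A/\omega)$ changes the sign of the width $G_{\ell,\pi}-G_{\ell,0}$; to leading order this happens near the zeros of $J_\ell(-A/\omega)$, i.e. where $A/\omega$ is close to a zero $s_{\ell,k}$ of $J_\ell$.

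Next I would translate this into the $(a,s)$-coordinates, where $a=A/\omega$ and $s=1/\omega$, so that letting $\omega\to+\infty$ corresponds to $s\to 0^+$. In these coordinates the location of the $k$-th constriction, for fixed $k$ and $\omega$ large, tends to a point with $a$-coordinate near $s_{\ell,k}$ and $s$-coordinate near $0$. Wait --- I must be careful with the coordinate naming here: in the statement of Lemma \ref{lacreg} the relevant coordinates on $\rr^2_+$ are $(a,s)$ with $a=A/\omega$ and $s=1/\omega$ playing the role described after statement 9) of the Introduction, so accumulation to the $s$-axis means $a\to 0$, not $s\to 0$. The cleanest route is therefore: from the Bessel asymptotics extract the existence of at least one sequence of constrictions $(a_m,s_m)\in Constr_\ell$ with $a_m\to 0$ and $s_m$ bounded, so that after passing to a subsequence $(a_m,s_m)\to(0,y)$ for some finite $y\ge 0$. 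The existence of such a sequence follows because, as $\omega\to\infty$ (hence $s\to 0$), the width of the area oscillates with the Bessel function and must vanish at a garland of constrictions whose rescaled abscissa $\ell=\rho$ is fixed; the corresponding rescaled ordinates accumulate at the origin of the $a$-coordinate.

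Then I would invoke Proposition \ref{necess}: any finite accumulation point $(0,y)$ of $Constr_\ell$ in the $s$-axis must satisfy $J_\ell(y)=0$, so in particular $y>0$ (since $J_\ell$ has no zero among the accumulation values arising here, and the origin is excluded by \cite[lemma 4.14]{bibgl}). This gives $\beta=(0,y)$ with $y$ a positive zero of $J_\ell$, which is exactly a point $\beta_{\ell,k}$, establishing that $\widehat{Constr}_\ell$ meets the $s$-axis in at least one point with positive ordinate. For the parenthetical "infinite number of points" I would then appeal forward to Statements 1) and 2) of Theorem \ref{tcgerm} together with Proposition \ref{poincmap}: the Poincar\'e map $\mcp$ and its inverse send a point $\beta_{\ell,m}$ on the $s$-axis to the neighboring points $\beta_{\ell,m\pm1}$ (neighbor zeros of a Bessel-equation solution), so iterating $\mcp$ starting from the single accumulation point just produced generates the full infinite sequence $\{\beta_{\ell,k}\}_{k\ge 1}$, each of which is therefore an intersection point of $\widehat{Constr}_\ell$ with the $s$-axis.

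The main obstacle I anticipate is the first step: rigorously extracting \emph{existence} of at least one accumulation sequence from the Bessel asymptotics. The asymptotics \eqref{bessas} control the boundary curves only to leading order with an $O(\ln|A|/A)$ error, and to conclude that the two boundaries actually cross (producing a genuine constriction, not merely a near-pinch) one must verify that the Bessel oscillation dominates the error term in the relevant regime and that the crossing point indeed has $A\neq 0$, i.e. is a constriction rather than a growth point. This is where the precise interplay between the scaling $A/\omega\to s_{\ell,k}$ and the control of the remainder has to be handled carefully, using the results of \cite{RK} beyond their leading-order form.
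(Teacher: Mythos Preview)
Your overall strategy matches the paper's: use the Klimenko--Romaskevich Bessel asymptotics to produce constrictions with $a\to 0$ and bounded $s$-coordinate, invoke Proposition~\ref{necess} to identify any accumulation point as some $(0,y)$ with $J_\ell(y)=0$, and then propagate via the Poincar\'e map. One correction: throughout the paper the convention is $a=1/\omega$ and $s=A/\omega$ (the display after statement 9) in the Introduction contains a typo; compare (\ref{elmu}) and the formula defining $Constr_\ell$), so accumulation to the $s$-axis means $\omega\to\infty$ with $s=A/\omega$ bounded, not $s=1/\omega\to 0$. Several of your intermediate sentences conflate these, which is why the sketch reads incoherently in places.

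The paper resolves precisely the obstacle you flag, via a sign-alternation argument (Proposition~\ref{mezhdu}): let $u_1<u_2<\dots$ be the successive local extrema of $|J_\ell(-u)|$. The RK bound rewritten in $(a,s)$-coordinates has right-hand side $\frac{a}{s}(K_1+K_2a^3+K_3\ln s)$, which for $s=u_k$ is $o(|J_\ell(-u_k)|)$ since $|J_\ell(-u_k)|\sim c/\sqrt{u_k}$, uniformly for $a\in(0,a_0]$. Hence the boundary width $G_{\ell,\pi}-G_{\ell,0}$ has the sign of $J_\ell(-u_k)$ at $s=u_k$, which alternates in $k$; the intermediate value theorem then forces a constriction at some $(a,w_k(a))$ with $w_k(a)\in(u_k,u_{k+1})$ for every $a\in(0,a_0]$. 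Letting $a\to 0$ yields an accumulation point in $\{0\}\times[u_k,u_{k+1}]$, and one obtains \emph{infinitely} many such points directly (one per large $k$), without invoking $\mcp$ at this stage. Your Poincar\'e-map route to the infinitude is also valid, and is exactly what the paper uses later (Proposition~\ref{posled}) to upgrade ``at least one $\beta_{\ell,m}$'' to ``every $\beta_{\ell,k}$''.
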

 
 \begin{proof} The proof is analogous to the discussion in \cite[subsection 4.3]{bibgl}. 
 It is based on the Bessel asymptotics of the boundary curves of phase-lock areas proved by A.Klimenko and O.Romaskevich \cite{RK}. Let us recall their result. 
The boundary of the phase-lock area $L_r$ consists of two curves $\partial L_{r,0}$, $\partial L_{\ell,\pi}$, corresponding to those parameter values, for which the Poincar\'e map of the corresponding dynamical system (\ref{josvec}) acting on the circle $\{\tau=0\}$ (i.e., the time 
$2\pi$ flow map) has fixed points $0$ and $\pi$ respectively. These  are graphs 
$$\partial L_{r,\alpha}=\{ B= G_{r,\alpha}(A)\}, \ G_{r,\alpha} \text{ are analytic functions on } \rr; \  \alpha=0,\pi.$$
 \begin{theorem} \cite[theorem 2]{RK}. There exist positive constants 
 $\xi_1$, $\xi_2$, $K_1$, $K_2$, $K_3$ such that the following statement 
 holds. Let $r\in\zz$, $A$, $\omega>0$ be such that 
 \begin{equation}|r\omega|+1\leq \xi_1\sqrt{A\omega}, \ \ A\geq \xi_2\omega.
 \label{preeq}\end{equation}
  Let $J_r$ denote the $r$-th Bessel function. Then 
  \begin{equation}\left|\frac1\omega G_{r,0}(A)-r+\frac1{\omega} 
  J_r\left(-\frac A{\omega}\right)\right|\leq\frac1A\left(K_1+\frac{K_2}{\omega^3}+
  K_3\ln\left(\frac A{\omega}\right)\right),\label{bbo}\end{equation}
  \begin{equation}\left|\frac1\omega G_{r,\pi}(A)-r-\frac1{\omega} 
  J_r\left(-\frac A{\omega}\right)\right|\leq\frac1A\left(K_1+\frac{K_2}{\omega^3}+
  K_3\ln\left(\frac A{\omega}\right)\right).\label{bbo2}\end{equation}
 \end{theorem}

 Let $u_1<u_2<\dots$ denote the sequence of points of local maxima of the modulus $|J_\ell(-u)|$: $u_k\to+\infty$, as $k\to\infty$.
 
 \begin{proposition} \label{mezhdu} For every $\ell\in\zz$, $a_0>0$  and every $k$ large enough dependently 
 on $\ell$, $a_0$, for every $a\in(0,a_0]$ there exists a $w_k=w_k(a)\in(u_k, u_{k+1})$ such that 
 $(a,w_k)\in Constr_\ell$. 
 \end{proposition}
 
 \begin{proof} In the coordinates $(a,s)$ inequalities (\ref{preeq}), 
 (\ref{bbo}) and (\ref{bbo2}) 
 can be rewritten for $r=\ell$ respectively as 
 \begin{equation} |\frac{\ell}a|+1\leq\frac{\xi_1}a\sqrt{s}, \ s\geq\xi_2,
 \label{preeqn}\end{equation}
 \begin{equation} \left|a G_{\ell,0}(\frac{s}a)-\ell+ a 
  J_\ell(-s)\right|\leq\frac{a}{s}\left(K_1+K_2a^3+
  K_3\ln(s)\right),\label{bbon*}\end{equation}
  \begin{equation} \left|a G_{\ell,\pi}(\frac{s}a)-\ell-a 
  J_\ell(-s)\right|\leq\frac{a}{s}\left(K_1+K_2a^3+
  K_3\ln(s)\right).\label{bbon2*}\end{equation}
  For every $k$ large enough the value $s=u_k$ satisfies inequality (\ref{preeqn}) 
  for all $a\in(0,a_0])$. Substituting $s=u_k$ to the right-hand side in (\ref{bbon*}) 
 transforms it to a sequence of functions of $a\in(0,a_0]$ 
 with uniform asymptotics $a(O(\frac1{u_k})+O(\frac{\ln{u_k}}{u_k}))$, as $k\to\infty$. 
 The values $|J_\ell(-u_k)|$ are known to behave asymptotically as $\frac1{\sqrt u_k}$ 
 up to a known constant factor, see asymptotic formula for $J_\ell(s)$, as $s\to+\infty$, in \cite[section 7.1, p. 195]{watson}. 
 Therefore, they dominate the  
 right-hand sides in (\ref{bbon*}) and (\ref{bbon2*}). This together with (\ref{bbon*}),  (\ref{bbon2*}) implies that 
 for every $k$ large enough, 
 set $A_k=\frac{u_k}{a}$, $\omega=a^{-1}$, 
  $$G_{\ell,0}(A_k)=\ell\omega-J_\ell(-u_k)(1+o(1)), \ G_{\ell,\pi}(A_k)=\ell\omega+J_\ell(-u_k)(1+o(1)),$$
  as $k\to\infty$, uniformly in $\omega>a_0^{-1}$, i.e., uniformly in $a\in(0,a_0]$. 
  This implies that the difference 
  $$\Delta(A):=G_{\ell,\pi}(A)-G_{\ell,0}(A) \text{ has the same sign, as } J_\ell(-u_k), \text{ at  } A=A_k,$$
   whenever $k$ is large enough, for every $a\in(0,a_0]$. 
  The extrema $u_k$ are intermittent maxima and minima of the function $J_\ell(-u)$, and the  
  values $J_{\ell}(-u_k)$ have intermittent signs. Therefore, the same holds for the 
  above differences $\Delta(A_k)$. Thus, for every $k$ large enough 
  the function $\Delta(A)$ has at least one zero $W_k$ lying between $A_k$ and 
  $A_{k+1}$. The latter zero corresponds to an intersection point of the boundaries $\partial L_{\ell,0}$, $\partial L_{\ell,\pi}$. The latter intersection point is 
 a constriction of the phase-lock area $L_\ell$. Hence, its abscissa is equal to $\ell\omega$, 
 by  \cite[theorem 1.4]{bibgl}. 
 Finally, we have shown that for every $k$ large enough for every $a\in(0,a_0]$, set 
  $w_k=w_k(a):=aW_k$,  the corresponding point $(a,w_k)$  lies in the constriction submanifold $Constr_\ell$. One has $w_k\in(u_k,u_{k+1})$, by construction. 
  Proposition \ref{mezhdu} 
  is proved.
  \end{proof}
  
  The family of points $(a,w_k(a))$ from Proposition \ref{mezhdu} 
  with $a\in(0,a_0]$ accumulates to a point $(0,w^*_k)$, $w^*_k\in[u_k,u_{k+1}]$. 
  Thus, the manifold  $Constr_\ell$ has infinitely many limit points $(0,w^*_k)$. 
  This proves Proposition \ref{pkr}.
  \end{proof}

\begin{proposition} \label{posled} Let $\beta_{\ell,k}=(0,s_{\ell,k})$ lie in $\widehat{Constr}_\ell\subset\rr^2_{a,s}$. 
Then $\beta_{\ell,k+1}=(0,s_{\ell,k+1})$ also lies in the manifold $\widehat{Constr}_\ell$, and the germ of 
$\widehat{Constr}_\ell$ at 
$\beta_{\ell,k+1}$ is the image of its germ at $\beta_{\ell,k}$ under the Poincar\'e map $\mcp$. 
If in addition, $k\geq2$, then $\beta_{\ell,k-1}$ also lies in $\widehat{Constr}_\ell$, and its germ at 
$\beta_{\ell,k-1}$ is the image of its germ at $\beta_{\ell,k}$ under the inverse $\mcp^{-1}$.  \end{proposition}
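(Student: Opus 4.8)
The plan is to transport the germ $(\widehat{Constr}_\ell,\beta_{\ell,k})$ to $\beta_{\ell,k+1}$ by the Poincar\'e map, exploiting that $\mcp$ is simultaneously a local analytic diffeomorphism near $X_+$ and an invariance of the trivial-monodromy locus. First I would pass to the blow-up $\wh\cc^4$ of Subsection 2.3. Approached inside $\Pi=\{\chi=0\}$ as $a\to0$, the point $\beta_{\ell,k}$ lifts to $[0:1]\times(\ell,s_{\ell,k})\in X_+=\Pi_+\cap\Sigma$; by Proposition \ref{poincmap} the map $\mcp$ is a well-defined germ of analytic diffeomorphism of $(\Pi_+,X_+)$ there, carrying $X_+$ into $X_+$ and off-$X_+$ points to off-$X_+$ points. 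In the blow-up chart at $[0:1]$ one has $\Pi_+\cong\rr^2_{a,s}$ with $X_+=\{a=0\}$, and by Lemma \ref{lacreg} (Proposition \ref{pgreg}) the germ of $\widehat{Constr}_\ell$ at $\beta_{\ell,k}$ is a regular analytic curve orthogonal to the $s$-axis, hence transversal to $X_+$ and contained in the domain of $\mcp$.

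Second, I would pin down $\mcp(\beta_{\ell,k})$. On $X_+$ the field $\mcv$ is the projectivization of the $\ell$-th Bessel system (Proposition \ref{blowric}), and by Proposition \ref{poincmap}(3) the restriction $\mcp|_{X_+}$ sends a zero of a Bessel solution to the next one. The orbit through $\beta_{\ell,k}$ is the solution $w=u'/u$ of (\ref{ricw}) with $u=s^\ell v$ and $v$ a solution of the Bessel equation (\ref{besseq}) vanishing at $s_{\ell,k}$ (a pole of $w$ is a zero of $u$, i.e. of $v$, since $s>0$). As the solutions vanishing at a prescribed point form a line and $J_\ell(s_{\ell,k})=0$, we get $v\propto J_\ell$, so the positive zeros of $v$ are exactly the $s_{\ell,j}$ and the next one after $s_{\ell,k}$ is $s_{\ell,k+1}$. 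Thus $\mcp(\beta_{\ell,k})=\beta_{\ell,k+1}$.

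Third --- the crux --- I would prove that $\mcp$ preserves $\widehat{Constr}_\ell$. Near $\beta_{\ell,k}$, off $X_+$, the set $\widehat{Constr}_\ell$ coincides with $\mcm_\ell\setminus\{a=0\}=\{h_{\ell,a,s}=Id\}\setminus\{a=0\}$ (as in the proof of Lemma \ref{lacreg}), which by Proposition \ref{propoinc} and Remark \ref{rkisom} is the locus where the linear system (\ref{mchoy}) has trivial monodromy. Since $\mcp$ is the first return along the leaves of the isomonodromic foliation $\mcg$, the monodromy of (\ref{mchoy}) is constant along the flow; hence a trivial-monodromy initial point returns to $\Pi=\{\chi=0\}$ at a point (again with $a,s\neq0$, as $\mcp$ preserves off-$X_+$ position) whose linear system again has trivial monodromy, i.e. $h=Id$. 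Therefore $\mcp$ maps the trivial-monodromy locus into itself, so $\mcp(\widehat{Constr}_\ell)\subseteq\widehat{Constr}_\ell$ near these points.

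Finally I would assemble the pieces. Applying the diffeomorphism $\mcp$ to the regular transversal germ $(\widehat{Constr}_\ell,\beta_{\ell,k})$ yields a regular curve germ through $\beta_{\ell,k+1}$, transversal to $X_+$, and contained in $\widehat{Constr}_\ell$; its $a>0$ side lies in $Constr_\ell$ and accumulates to $\beta_{\ell,k+1}$, so $\beta_{\ell,k+1}\in\widehat{Constr}_\ell$. By Proposition \ref{pgreg} the set $\widehat{Constr}_\ell$ is a regular curve at $\beta_{\ell,k+1}$, and a regular curve germ contained in it through the same point must equal it; hence the image germ is precisely the germ of $\widehat{Constr}_\ell$ at $\beta_{\ell,k+1}$. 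The statement for $\mcp^{-1}$ with $k\ge2$ is identical, since $\mcp^{-1}|_{X_+}$ passes from $s_{\ell,k}$ to the previous zero $s_{\ell,k-1}$, which is positive exactly when $k\ge2$. I expect the third step to be the main obstacle: making rigorous the invariance of triviality of monodromy along the isomonodromic leaf and the equivalence ``trivial monodromy of (\ref{mchoy}) $\Leftrightarrow h=Id$'' at the return point (including on the reflected $a<0$ branch, via the time-reversal symmetry noted for $\mcm_\ell$); the other steps are local-diffeomorphism bookkeeping or are already packaged in Proposition \ref{poincmap}.
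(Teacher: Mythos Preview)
Your proposal is correct and follows the same overall plan as the paper: use Proposition~\ref{poincmap} to locate $\mcp(\beta_{\ell,k})=\beta_{\ell,k+1}$ via neighbouring zeros of $J_\ell$, then argue that $\mcp$ preserves the constriction locus, and conclude by comparing germs. The only substantive difference is in your third step. You justify invariance of $\widehat{Constr}_\ell$ by passing to the linear system (\ref{mchoy}), invoking isomonodromicity to preserve triviality of its monodromy, and then translating back to $h=Id$; you rightly flag the back-translation as the delicate point. The paper bypasses this entirely: it uses the fact (stated in Subsection~1.1, just after (\ref{isomnews})) that each leaf of $\mcg$ consists of torus flows that are pairwise conjugate by diffeomorphisms isotopic to the identity. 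Conjugacy of the flows immediately gives conjugacy of their time-$2\pi$ Poincar\'e maps, so the property $h=Id$ is carried along the leaf with no reference to the linear system or its monodromy. This makes your ``main obstacle'' disappear; the rest of your argument (blow-up bookkeeping, identification of the Bessel solution through $s_{\ell,k}$ as $J_\ell$, and the $\mcp^{-1}$ case for $k\ge2$) matches the paper.
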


\begin{proof} The Poincar\'e map $\mcp$ sends $\beta_{\ell,k}$ to $\beta_{\ell,k+1}$, by Proposition 
\ref{poincmap}, Statement 3). It sends the germ of the manifold $\widehat{Constr}_\ell$ 
at $\beta_{\ell,k}$ to its germ at $\beta_{\ell,k+1}$. Indeed, it preserves $\ell$ and the conjugacy class of the 
underlying dynamical system on torus, and hence, property of  dynamical system to have identity Poincar\'e map. Therefore, it preserves the property to be a constriction, by Proposition \ref{propoinc}. This implies the 
first statement of the proposition, on the map $\mcp$. Its second statement, on $\mcp^{-1}$, 
is proved analogously.  
\end{proof}

\begin{proof} {\bf of Theorems \ref{cangerms}  and \ref{tcgerm}.} An intersection point from Proposition \ref{pkr} 
is some of the $\beta_{\ell,k}=(0,s_{\ell,k})$.  It is a regular point of the ambient component of 
the submanifold $\widehat{Constr}_\ell$, and it is  unique intersection point of the component 
with the $s$-axis. Their 
intersection is orthogonal. The component  projects to the whole $a$-axis. 
These statements follow from Lemma \ref{lacreg}. All the $\beta_{\ell,k}$ are intersection points. This follows from  Proposition \ref{posled}. Theorem \ref{cangerms} is proved. Theorem \ref{tcgerm} follows from Theorem \ref{cangerms} and 
Propositions \ref{poincmap}, \ref{posled}.   \end{proof}

  \subsection{Conjectures \ref{conj3d} and \ref{conjeach}. Proof of Proposition 
  \ref{conjs}}
  
  \begin{proof} {\bf of Proposition \ref{conjs}.} 
  Let $\mcc$ be a connected component of the submanifold $Constr_\ell\subset(\rr_+^2)_{a,s}$. 
  The coordinate $a=\omega^{-1}$ is unbounded from above on $\mcc$, see  
  \cite[theorem 1.12]{bibgl}. Therefore, its  projection to the $a$-semiaxis is a semi-infinite interval 
  $(a_0,+\infty)$. Let now $a$ be bounded on a semicurve $\mcc_-$ of the curve $\mcc$. The semicurve  $\mcc_-$  
   should tend to "infinity" in $\rr_+^2$,  since $\mcc$ is a submanifold in $\rr_+^2$: a closed subset in $\rr_+^2$ that is locally a submanifold. Therefore, either $\mcc_-$ has a limit point in the positive $s$-axis, or $s$ tends to infinity 
   along the semicurve $\mcc_-$ (Theorem \ref{cangerms}).   In the first case one has $\mcc=\mcc_{\ell,k}$ for some $k$, by Theorem \ref{cangerms}.  The second case is impossible, 
  by \cite[proposition 4.13]{bibgl}. Finally, $\mcc=\mcc_{\ell,k}$. This proves Proposition 
  \ref{conjs}. \end{proof}
  
  Suppose  now that the statement of  Conjecture \ref{conj3d} is true for a given $\ell\in\zz$: 
  each component $\mcc$ 
   of the constriction submanifold $Constr_\ell$ is bijectively projected onto the 
  $a$-semiaxis. Then $\mcc$  is continuously parametrized by $a\in\rr_+$, and hence, $a$  is bounded on at least one  side of the curve $\mcc$. 
  Therefore, $\mcc=\mcc_{\ell,k}$ for some $k$, by Proposition \ref{conjs}. 
  Hence, the statement of Conjecture \ref{conjeach} holds. Thus, Conjecture \ref{conj3d} implies 
  Conjecture \ref{conjeach}.
  
  \subsection{Constriction curves and components of three-dimensional phase-lock areas. 
  Proof of Theorem \ref{thgarl} and Corollary \ref{cqueer}}
  
    In the proof of Theorem \ref{thgarl} and Corollary \ref{cqueer} we use the following theorem and lemma. 
  
  \begin{theorem} \label{thpos} \cite[theorem 1.7]{bibgl} For every $\omega>0$ and $r\in\zz$ 
  each constriction $(B_0,A_0)$ in the corresponding two-dimensional phase-lock area $L_r(\omega)$ is {\bf positive,} that is, there exists a  neighborhood 
  $U=U(A_0)\subset\rr$ such that the punctured interval $B_0\times(U\setminus\{ A_0\})$ lies 
  in $Int(L_r(\omega))$. 
  \end{theorem}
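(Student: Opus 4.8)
The plan is to reduce positivity to the sign of a single second derivative of the trace of the monodromy, computed transversally to the phase-lock area at the constriction. By Remark \ref{rkisom} and the Riccati form (\ref{ricg}), the time $2\pi$ Poincar\'e map $h_{B,A}$ of (\ref{josvec}), written in the coordinate $\Phi=e^{i\theta}$, is the restriction to the unit circle $\{|\Phi|=1\}$ of the M\"obius transformation determined by the monodromy operator $M=M(B,A)$ of linear system (\ref{mchoy}) along the loop $\tau\in[0,2\pi]$. As the torus dynamics is real, this transformation preserves the unit circle and its interior; consequently the invariant $T(B,A):=\frac{(\tr M)^2}{\det M}$ is real-valued and governs the conjugacy type of $h$: the map is hyperbolic (an attracting and a repelling fixed point on the circle, locked rotation number $r$) exactly when $T>4$, parabolic when $T=4$, and elliptic (fixed points off the circle) when $T<4$. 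Thus $Int(L_r(\omega))=\{T>4\}$ near the constriction, the two boundary curves $B=G_{r,0}(A)$ and $B=G_{r,\pi}(A)$ form the parabolic locus $\{T=4\}$, and positivity at a constriction $X=(B_0,A_0)$, $B_0=r\omega$, is precisely the inequality $T(B_0,A)>4$ for $A\ne A_0$ near $A_0$.

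First I would record the jet of $T$ at $X$. There the monodromy is trivial (Remark \ref{rkisom}, \cite[proposition 4.1]{bibgl}), so $M(X)=\pm I$; since $\ell=r\in\zz$ at the constriction one checks $\det M(X)=1$, and after normalizing $\det M\equiv1$ the matrix $M(B,A)$ takes values near $X$ in the disk-preserving real form (isomorphic to $\sl_2(\rr)$), whose Lie algebra $\mathfrak{sl}_2(\rr)$ carries the Lorentzian form $\xi\mapsto-\det\xi$ of signature $(+,+,-)$. A direct computation in the exponential chart $M=\exp\xi$ gives $T-4=-4\det\xi+O(\xi^3)$; hence $T$ has a nondegenerate critical point at the scalar value, and pulling back along the analytic map $(B,A)\mapsto M(B,A)$ yields $\nabla_{(B,A)}T(X)=0$ together with
$$\partial_A^2 T(X)=-8\det P,\qquad P:=\text{the tangent vector }\partial_A M(B,A)\big|_{X}\in\mathfrak{sl}_2(\rr).$$
Therefore positivity is equivalent to $\det P<0$, i.e. to $P$ being a hyperbolic (real-diagonalizable) element of $\mathfrak{sl}_2(\rr)$; the same computation shows the two boundary curves cross transversally at $X$ precisely when the restriction of the Lorentzian form to the image plane is nondegenerate indefinite, the two branches being tangent to its null cone.

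To compute $P$ I would use the Duhamel first-variation formula for the monodromy. Rewriting (\ref{mchoy}) in the time variable $\tau$ (with $z=e^{i\tau}$) as $\dot Y=\mathcal B(\tau)Y$ and differentiating in $A$ along the Josephson slice $\chi=0$ produces a rank-one perturbation $\partial_A\mathcal B(\tau)=\frac1\omega\,\diag(-i\cos\tau,\,0)$. Since the monodromy at $X$ is trivial, the fundamental solution $\Phi$ with $\Phi(0)=I$ is $2\pi$-periodic, and, up to the scalar $\pm1$ and projection onto the traceless part,
$$P=\frac1\omega\int_0^{2\pi}\Phi(\tau)^{-1}\,\diag(-i\cos\tau,\,0)\,\Phi(\tau)\,d\tau .$$
The task then becomes to evaluate this explicit periodic integral and establish the inequality $\det P<0$.

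The main obstacle is exactly this last inequality: showing that the transversal second derivative of the trace genuinely pushes $T$ above $4$ on both sides of the constriction rather than into the elliptic region. I expect to extract the sign by rewriting $\det P=-\tfrac12\tr(P^2)$ as a manifestly signed quadratic expression in the periodic solution $\Phi$, the triviality of the monodromy being exactly what makes the integral close up; the non-degeneracy $\det P\ne0$ should be obtained by the same kind of Bessel non-vanishing argument already used in Proposition \ref{propneq}, while the strict sign $\det P<0$ can be cross-checked against the Bessel asymptotics (\ref{bessas}), which give $G_{r,0}(A)-r\omega\approx-J_r(-A/\omega)$ and $G_{r,\pi}(A)-r\omega\approx+J_r(-A/\omega)$ and thereby display the required straddling of $B_0=r\omega$ by the two boundary curves at the constrictions in the far region.
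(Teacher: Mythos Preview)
First, note that the present paper does not prove Theorem \ref{thpos}: it is quoted from \cite[theorem 1.7]{bibgl} and used as an input. The proof there is not the local second-variation computation you outline; it is a global argument. Roughly, one shows that along any connected component of the constriction manifold the ``type'' of a constriction (positive versus negative) cannot change, and then one forces the positive type by a boundary/asymptotic analysis together with the absence of ``ghost'' constrictions. The point is that the sign you are after is obtained by a connectedness-and-deformation argument rather than by computing a Hessian at a single constriction.

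Your reduction is clean and correct up to the point where you arrive at the condition $\det P<0$; the gap is that you do not establish this inequality, and the two mechanisms you propose for it do not reach arbitrary constrictions. The Bessel computation behind Proposition \ref{propneq} is a calculation at $a=0$ (equivalently $\omega=\infty$): it controls the first nontrivial jet of the Poincar\'e map along the $s$-axis, not the second variation of $T$ at a constriction with $a>0$. Likewise the Klimenko--Romaskevich asymptotics (\ref{bessas}) are only valid in the regime (\ref{preeq}); they say nothing about constrictions at moderate $A$ for a fixed $\omega$, so ``cross-checking'' the sign there is not a proof. In fact the whole difficulty of the positivity theorem is precisely to rule out the a priori possibility that some isolated constriction has $\det P\ge 0$ (a ``negative'' or degenerate constriction), and nothing in your local integral formula for $P$ forbids this without further input. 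A workable route, if you want to salvage the local viewpoint, is to combine your Hessian identification with a deformation argument: show that $\det P$ cannot vanish along a constriction curve (this is where one genuinely needs more than a variational formula), and then read off the sign from the asymptotic end of the curve where your Bessel check is legitimate. That is essentially what the argument in \cite{bibgl} accomplishes, albeit phrased differently.
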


  \begin{proof} {\bf of Theorem \ref{thgarl}.} Fix some $r\in\zz$ and $\omega>0$.  Consider the 
  line $\La:=\{ a=\omega^{-1}\}\subset\rr^2_{a,s}$. 
 We will identify its points with their $s$-coordinates. For every point 
 $x=(B_0,A_0;\omega)\in Int(L_r)$ consider those  
 constrictions in $\partial L_r(\omega)$ that lie  below $x$, i.e., with $A<A_0$. Let us denote 
 their subset by $Constr_{r, x}$.  Let $C_{max}\in Constr_{r,x}$ denote the upper of them; set $s_{max}:=s(C_{max})=\frac{A(C_{max})}{\omega}$. The  constrictions in $Constr_{r,x}$ 
  correspond to intersection points of the segment $[0,s_{max}]\subset\La$  with  constriction curves (treated as components of the constriction submanifold $Constr_r$). To each constriction 
   $\eta\in Constr_{r,x}$ we assign its {\it multiplicity,} which is equal to the index of the  intersection 
  $Constr_r\cap\La$  at $\eta$. For each constriction 
  curve $\mcc$ let $n(\mcc)\in\{0,1\}$ denote the sum of multiplicities of those 
  constrictions in $Constr_{r,x}$ (taken modulo two) that lie in $\mcc$. To each point $x\in Int(L_r)$ we put 
  into correspondence its {\it code:} 
  $$\text{code}(x):=\text{ the collection of those constriction curves } \mcc \text{ for which } 
  n(\mcc)=1,$$
  i.e., the collection of those constriction curves that contain odd number of constrictions 
 (with multiplicities) lying in $Constr_{r,x}$. 
  
 \begin{proposition} For every $r\in\zz$ the code is constant on each connected component  of 
 the three-dimensional phase-lock area $L_r$.
 \end{proposition}
 \begin{proof} As a point $x$ moves continuously inside $Int(L_r)$, the constrictions of the 
 two-dimensional phase-lock area $L_r(\omega(x))$ lying below $x$ either remain the same, 
 or some new constrictions are born, or some of them disappear\footnote{Conjecture \ref{conj3d} 
  implies that as $\omega$ varies continuously, birth (disappearance) of constrictions is impossible.}. A birth of constrictions comes from 
 a tangency point of the line $\{a=\omega^{-1}(x)\}\subset\rr^2_{a,s}$ with a constriction curve 
 $\mcc$. The number of new constrictions born from a tangency point is obviously even, and all of 
 them lie in $\mcc$. Therefore, birth of new constrictions does not change the code. 
 The case of disappearance of constrictions is treated analogously.
 \end{proof}
 
 \begin{proposition} 1)For every $r\in\zz$, $N\in\nn$ there exists a small  $\var>0$ such that  for every $a_0\in(0,\var)$ the line $\La=\{ a=a_0\}\subset\rr^2_{a,s}$ intersects each one of the curves $\mcc_{r,k}$, $k=1,\dots,N$, transversally at a  point $x_k=x_k(a_0)$ close to $\beta_{r,k}$ so that $x_1,\dots,x_N$ are the only points of intersection of the 
 segment $I:=\{a_0\}\times[0,x_N+\var]$ with the constriction manifold $Constr_r$. 
 
 2) For every $k=1,\dots,N$ let us identify $x_k$ with the corresponding constriction 
 of the two-dimensional phase-lock area $L_r(\omega)$, $\omega=a_0^{-1}$. 
On the component of  $Int(L_r(\omega))$ adjacent to $x_k$ 
 from above the code is identically equal to $(\mcc_1,\dots,\mcc_k)$.
 \end{proposition}
 
 The proposition follows immediately from Theorem \ref{cangerms}. It implies that for every 
 $r\in\zz$ infinitely many different codes are realized by points of the interior of the three-dimensional phase-lock area $L_r$. Therefore, $Int(L_r)$  
 consists of infinitely many components. Theorem \ref{thgarl} is proved.
 \end{proof}
 
 \begin{proof} {\bf of Corollary \ref{cqueer}.} Statement (i) of Corollary \ref{cqueer} follows 
 from Proposition \ref{conjs}. There exists a domain $V\subset U_{\mcc}$ adjacent to $\mcc$ such that the corresponding subset $\wt V_{\ell}\subset\{ B=\ell\omega\}$, see (\ref{uphase}), 
 lies in  a connected component of  $Int(L_\ell)$. Indeed, take an arbitrary $a_0>0$ such that the line $\La=\{ a=a_0\}$ intersects $\mcc$ transversally in at least one point $x$. There exists a semi-interval $J(x)\subset\La$ adjacent to $x$  
 such that $J(x)$ lies both in $U_\mcc$ and in the two-dimensional 
 phase-lock area $L_\ell(\omega(x))$, $\omega(x)=a_0^{-1}$
 (positivity of constrictions, see Theorem \ref{thpos}, and transversality). 
 We take the above interval $J$ to be the maximal one.  The domain $V$ we 
 are looking for is the union of  the intervals $J(x)$ for all the points $x$ of transversal intersection  $\La\cap\mcc$. 
The corresponding subset $\wt V_\ell\subset\{ B=\ell\omega\}$ lies in a connected component of the three-dimensional phase-lock area $Int(L_\ell)$.  Let us denote the latter component 
by $Comp(V)$. Let us show that it is adjacent to no curve $\mcc_k$. 

Making the intersection $\mcc\cap\La$ transversal (by choosing a generic $a_0$) 
and taking the above $x$ to be the lower 
point of the latter intersection, we get that $J_\ell(x)$ is a vertical line interval adjacent to $x$ 
from above. The code of every its point contains the curve $\mcc$, by construction. 
Thus, the code of each point in $\wt V_\ell$ contains $\mcc$. On the other hand, we 
have the following 
\begin{proposition} \label{peven} On each component in $Int(L_\ell)$
adjacent to a constriction curve $\mcc_{\ell,k}$, the code can contain no queer constriction 
curve $\mcc$. 
\end{proposition}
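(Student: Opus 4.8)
The plan is to exploit the fact established just above that the code is constant on each connected component of $Int(L_\ell)$, so that it suffices to compute the code of the adjacent component $W$ at one conveniently chosen point, which I will take near the landing point $\beta_{\ell,k}=(0,s_{\ell,k})$ of $\mcc_{\ell,k}$ on the $s$-axis. The reason for working near $\beta_{\ell,k}$ is the (unlabelled) Proposition used in the proof of Theorem \ref{thgarl}: for every $N$ and every sufficiently small $a_0>0$, the segment $\{a=a_0\}\times[0,x_N+\var]$ meets the constriction manifold $Constr_\ell$ only at the $N$ transversal intersection points $x_1<\dots<x_N$ with the regular curves $\mcc_{\ell,1},\dots,\mcc_{\ell,N}$. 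Hence no queer curve meets this segment, and each $x_j$ carries intersection multiplicity one, so $n(\mcc_{\ell,j})=1$. Consequently the code of a point of $Int(L_\ell)$ lying on $\{a=a_0\}$ just above $x_k$ (with $s$-coordinate in $(x_k,x_{k+1})$) equals $(\mcc_{\ell,1},\dots,\mcc_{\ell,k})$, and for a point just below $x_k$ it equals $(\mcc_{\ell,1},\dots,\mcc_{\ell,k-1})$; in either case it contains no queer constriction curve. Thus the whole content of the proof reduces to showing that $W$ contains such a point, with $a_0$ arbitrarily small.

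This last reduction is the step I expect to be the main obstacle, since a priori $W$ could be adjacent to $\mcc_{\ell,k}$ only at points where the coordinate $a$ is large, far from the landing region. To rule this out I would argue that $W$ must be one of two canonical strip components running along the whole of $\mcc_{\ell,k}$. By positivity of constrictions (Theorem \ref{thpos}) together with (\ref{intint}), every point $q=(a,s)\in\mcc_{\ell,k}$ carries a punctured vertical interval $\{a\}\times\bigl((s-\var(q),\,s+\var(q))\setminus\{s\}\bigr)$ contained in $Int(L_\ell)$ for some $\var(q)>0$. The union over all $q\in\mcc_{\ell,k}$ of the upper halves of these intervals is a connected set (a thickening of the connected curve $\mcc_{\ell,k}$, fibered by connected segments) lying in $Int(L_\ell)$, hence contained in a single component $W^+_k$; symmetrically the lower halves lie in a single component $W^-_k$. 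At any point $p\in\mcc_{\ell,k}$ where $W$ is adjacent to $\mcc_{\ell,k}$, the garland structure of each nearby planar slice $L_\ell(\omega)$ (consecutive domains separated by a single constriction) forces the points of $W$ close to $p$ to lie either strictly above or strictly below $\mcc_{\ell,k}$, so $W=W^+_k$ or $W=W^-_k$.

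Finally, since $\mcc_{\ell,k}$ lands at $\beta_{\ell,k}$ and projects onto the whole $a$-semiaxis, and in particular reaches arbitrarily small values of $a$ (Theorem \ref{cangerms}), both strips $W^+_k$ and $W^-_k$ contain points $(a_0,s)$ with $a_0$ as small as desired, lying just above, respectively just below, the intersection point $x_k=\mcc_{\ell,k}\cap\{a=a_0\}$ near $\beta_{\ell,k}$. Choosing $a_0$ small enough that the Proposition above applies with $N=k+1$, and invoking the code computation of the first paragraph, the code of $W^+_k$ is $(\mcc_{\ell,1},\dots,\mcc_{\ell,k})$ and that of $W^-_k$ is $(\mcc_{\ell,1},\dots,\mcc_{\ell,k-1})$, neither containing a queer constriction curve. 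As $W$ is one of these two, this proves the proposition (and, as a byproduct, the two codes differ, so $W^+_k\neq W^-_k$).
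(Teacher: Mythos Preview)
Your proof is correct and follows essentially the same route as the paper's: both arguments use constancy of the code on components, reduce to checking the code at a point of $W$ near $\beta_{\ell,k}$, and use that a queer curve $\mcc$ (not accumulating to the $s$-axis, by Theorem \ref{cangerms}) cannot meet the short vertical segment below such a point.

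The main difference is one of detail. The paper simply asserts the existence of a domain $S$ adjacent to $\mcc_{\ell,k}$ with $\wt S_\ell\subset W$, ``proved analogously to Corollary \ref{cqueer}'', and then takes $x\in S$ close to $\beta_{\ell,k}$ --- leaving implicit why $S$ (and hence $W$) must reach arbitrarily small values of $a$. You make this explicit via the strip construction $W^\pm_k$, arguing that positivity of constrictions (Theorem \ref{thpos}) yields a connected one-sided thickening of $\mcc_{\ell,k}$ contained in a single component, so that any $W$ adjacent to $\mcc_{\ell,k}$ equals one of the two strips and therefore follows the curve all the way down to $\beta_{\ell,k}$. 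This is a genuine gain in rigor. Your connectedness claim for the union of upper half-intervals is justified: choosing $\var(q)>0$ locally uniform (possible since $Int(L_\ell)$ is open) makes the local strips connected, and they overlap along the connected curve $\mcc_{\ell,k}$. As a bonus you compute the full codes $(\mcc_{\ell,1},\dots,\mcc_{\ell,k})$ and $(\mcc_{\ell,1},\dots,\mcc_{\ell,k-1})$, which the paper does not do in this proof (though it appears in the unlabelled Proposition inside the proof of Theorem \ref{thgarl}).
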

\begin{proof} Fix a component $W$ adjacent to $\mcc_{\ell,k}$   of the interior $Int(L_\ell)$. 
There exists a domain $S\subset\rr^2_{a,s}$ adjacent to $\mcc_{\ell,k}$ such that the corresponding subset $\wt S_{\ell}\subset\{ B=\omega\ell\}\subset\rr^3_{B,A;\omega}$, 
 see (\ref{uphase}), 
lies in $W$.  This is proved analogously to the above proof of Corollary \ref{cqueer}. For every $x\in S$ the corresponding point  
$(\ell a^{-1}(x), \frac{s(x)}{a(x)};a^{-1}(x))\in\wt S_\ell\subset W$ will  be identified with $x$ and also denoted by $x$. Fix a queer constriction curve $\mcc$ (if any). We have to show that the code of every $x\in W$ does not contain $\mcc$. It suffices to prove this statement just for some $x\in W$. 
Taking $x\in S$ to be close enough to the endpoint $\beta_{\ell,k}$ of the curve $\mcc_{\ell,k}$, 
we can achieve that the vertical segment in $\La$ connecting $x$ and its projection to the $a$-axis
 does not intersect $\mcc$. This follows from the assumption that  $\mcc$ is queer (hence, 
 accumulates to no point of the $s$-axis, by Theorem \ref{cangerms}). Then the code of the point 
 $x$ does not contain $\mcc$, which follows from definition. Proposition \ref{peven} is proved.
 \end{proof}
 
  The component in $Int(L_\ell)$ containing $\wt V_\ell$ is adjacent to no curve $\mcc_{\ell,k}$, 
 since its code contains $\mcc$ (Proposition \ref{peven}). This proves Corollary \ref{cqueer}.
 \end{proof}
 
\subsection{Background material. Linear systems with irregular singularities: Stokes phenomena and isomonodromic deformations}

\subsubsection{Irregular singularities and Stokes phenomena}
The following material on irregular singularities of linear systems and Stokes phenomena is contained in \cite{2, BUL, bjl, 12, jlp, sib}. 

Consider a two-dimensional linear system 
 \begin{equation}Y'=
\left(\frac K{z^2}+\frac Rz+O(1)\right)Y, \ \ \ 
Y=(Y_1,Y_2)\in\cc^2,\label{eqlin}\end{equation}
on a neighborhood of 0. Here  $K$ and $R$ are complex $2\times2$-matrices, $K$ has distinct eigenvalues $\la_1\neq\la_2$, and $O(1)$ is a holomorphic 
matrix-valued function on a neighborhood of $0$. 
Then we say that the singular point 0 of system (\ref{eqlin}) is {\it irregular non-resonant 
of Poincar\'e rank 1.}  The matrix $K$ is conjugate to  
$\wt K=\diag(\la_1,\la_2)$, $\wt K=\mathbf H^{-1}K\mathbf H$, $\mathbf H\in GL_2(\cc)$,  and one can achieve that 
$K=\wt K$ by applying the constant linear change (gauge transformation)
$Y=\mathbf H\mathbf{\wh Y}$. 

Recall that two systems of type (\ref{eqlin}) are {\it analytically equivalent} near the origin, 
if one can be transformed to the other by linear space coordinate  change 
$Y=H(z)\wt Y$), where $H(z)$ is a holomorphic $GL_2(\cc)$-valued function on a neighborhood 
of the origin. Two systems (\ref{eqlin}) are {\it formally equivalent,} if the above $H(z)$ exists in 
the class of invertible formal power series with matrix coefficients. 

System (\ref{eqlin}) is 
formally equivalent to a unique {\it formal normal form} 
\begin{equation}\wt Y'=\left(\frac{\wt K}{z^2}+\frac{\wt R}z\right)\wt Y, \ \wt K=\diag(\la_1,\la_2), 
\ \wt R=\diag(b_1,b_2),\label{nform}
\end{equation}
\begin{equation} \wt R \ \  \text{ is the diagonal part of the matrix } \ \  \mathbf H^{-1}R\mathbf H.
\label{resfo}\end{equation} 
The matrix coefficient $K$ in  system (\ref{eqlin}) and the corresponding  matrix $\wt K$ 
in (\ref{nform}) are called the 
{\it main term matrices}, and $R$, $\wt R$  the {\it residue matrices}. However the normalizing series $H(z)$ generically diverges. At the same time, there exists a covering of a punctured 
neighborhood of zero by two sectors $S_0$ and $S_1$ with vertex at 0 in which 
there exist holomorphic $GL_2(\cc)$-valued matrix functions $H_j(z)$, $j=0,1$, that are $C^{\infty}$ 
smooth on  $\overline S_j\cap D_r$ for some $r>0$, and such that the variable changes $Y=H_j(z)\wt Y$ 
transform (\ref{eqlin}) to (\ref{nform}). This Sectorial Normalization Theorem 
 holds for the so-called {\it Stokes sectors}. Namely, consider the rays issued from 0 and forming the set 
\begin{equation}\{\re\frac{\la_1-\la_2}z=0\}.\label{strays0}\end{equation}
 They are called {\it imaginary dividing rays} or {\it Stokes rays}.  
A sector $S_j$ is called a {\it Stokes sector,} if it contains  one imaginary dividing ray 
 and its closure does not contain the other one. 
 
 Let $W(z)=\diag(e^{-\frac{\lambda_1}z}z^{b_1},e^{-\frac{\lambda_2}z}z^{b_2})$ denote the canonical diagonal fundamental 
matrix solution of the formal normal form (\ref{nform}). The matrices 
$X^j(z):=H_j(z)W(z)$ are fundamental matrix solutions of the initial equation (\ref{eqlin}) 
defining solution bases in $S_j$ called the {\it canonical sectorial solution bases.} 
Here we choose the branches $W(z)=W^j(z)$ of the  matrix function $W(z)$ in $S_j$ so that $W^1(z)$ is obtained from $W^0(z)$ by counterclockwise analytic extension from $S_0$ to $S_1$. 
And  we define the branch $W^2(z)$ of $W(z)$ in $S_2:=S_0$  obtained from $W^1(z)$ by counterclockwise 
analytic extension from $S_1$ to $S_0$. This yields 
another canonical matrix solution $X^2(z):=H_0(z)W^2(z)$ of system (\ref{eqlin}) in $S_0$, which is obtained 
from $X^0(z)$ by multiplication from the right by the monodromy matrix $\exp(2\pi i\wt R)$ 
of the formal normal form (\ref{nform}).  Let $S_{j,j+1}$ denote the connected component 
of intersection $S_{j+1}\cap S_j$, $j=0,1$, that is crossed when one moves 
from $S_j$ to $S_{j+1}$ counterclockwise, see Fig. 5. The transition matrices $C_0$, $C_1$ 
between thus defined canonical solution bases $X^j$, 
\begin{equation} X^1(z)=X^0(z)C_0 \text{ on }  S_{0,1}; \ \ X^2(z)=X^1(z)C_1 \text{ on } S_{1,2},\label{stokes}\end{equation}
are called the {\it Stokes matrices.}  

\begin{remark} \label{rstokes}
A priori, the canonical fundamental matrix solution $W(z)$ 
of the formal normal form does not necessary have a single-valued branch, since the exponents 
$b_k$ may be non-integer. A  diagonal fundamental matrix solution $W(z)$ is uniquely defined 
up to multiplication by diagonal matrix. This renormalization of the matrix $W(z)$ in the above construction  transforms 
the initial Stokes matrix pair to a new Stokes matrix pair obtained from the initial one by conjugation by one and the same diagonal matrix. It is well-known that {\it two germs of linear systems of type (\ref{eqlin}) are analytically equivalent, if and only if they have the same formal normal form and 
their Stokes matrix pairs are conjugated by the same diagonal matrix 
(i.e.,  can be normalized  to be the same).} 
\end{remark}

\begin{example} Let  $A=\diag(\la_1,\la_2)$, and let  $\la_2-\la_1>0$. Then 
the imaginary dividing rays are the positive and negative imaginary semiaxes. 
The Stokes sectors $S_0$ and $S_1$ covering $\cc^*$ satisfy the following 
conditions:

- the sector $S_0$ contains the positive imaginary semiaxis, and its closure does 
not contain the negative one;

- the sector $S_1$ satisfies the opposite condition. See Fig. 5.

\noindent The Stokes matrices $C_0$ and $C_1$ are unipotent upper and lower triangular 
respectively. 
\end{example}

\begin{figure}[ht]
  \begin{center}
   \epsfig{file=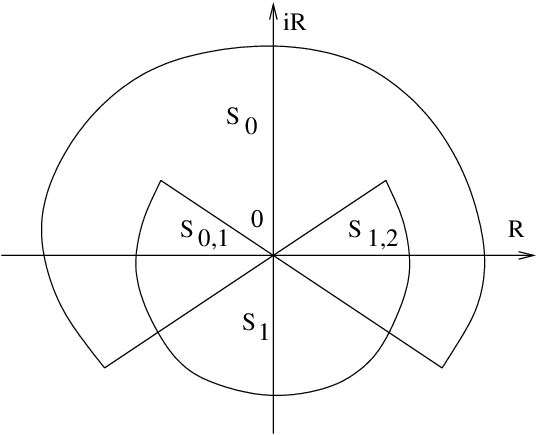}
    \caption{Stokes sectors in the case, when $\la_1-\la_2\in\rr$.}
  \end{center}
\end{figure} 

\begin{remark} Let $M_f$ denote the {\it formal monodromy} of system (\ref{eqlin}): 
 the monodromy matrix $\diag(e^{2\pi i b_1}, e^{2\pi i b_2})$ of its  formal normal form (\ref{nform}) with respect to a diagonal fundamental matrix solution $W(z)$.  Let $M$ denote 
 the monodromy matrix of  system (\ref{eqlin}) written in the basis given by the fundamental 
 matrix solution $X^0(z)$. It is well-known, see \cite[p.35]{12}, that 
\begin{equation}M=M_{norm}C_1^{-1}C_0^{-1}.\label{monst}\end{equation}
 \end{remark}
\subsubsection{Isomonodromic deformations}
We will deal with linear systems on $\oc$ of the type 
\begin{equation} Y'=\left(\frac K{z^2}+\frac Rz+ N\right)Y, \ \ Y\in\cc^2,\label{genlin}\end{equation}
where $K$, $R$, $N$ are complex $2\times2$-matrices such that each one 
of the matrices $K$, $N$ has distinct eigenvalues. Let $\la_{\infty1}$, $\la_{\infty2}$ denote the 
eigenvalues of the matrix $N$. System (\ref{genlin}) has two nonresonant irregular singular 
points of Poincar\'e rank 1: at the origin and at infinity. Namely, the variable change 
$\wt z=\frac1z$ transforms the infinity in $z$-coordinate to the origin in $\wt z$-coordinate, 
and system (\ref{genlin}) is transformed to a linear system with irregular singular point at the origin in the $\wt z$-coordinate. The corresponding Stokes rays, i.e., Stokes rays 
 "at infinity" of initial system (\ref{genlin}), are given by the equation 
 \begin{equation}\re((\la_{\infty2}-\la_{\infty1})z)=0\label{straysinf}\end{equation} 
 
 For every $p=0,\infty$ let $S_{p0}$, $S_{p1}$ be Stokes sectors of system (\ref{genlin}) at the 
 singular point $p$. Fix a point $z_0\in\cc^*=\cc\setminus\{0\}$ and paths $\alpha_{p}$ 
 going from the point $z_0$ to some point in $S_{p0}$. Let $X^{p0}(z)$ be the canonical 
 sectorial fundamental matrix solutions of system (\ref{genlin}) in sectors $S_{p0}$. Consider 
 their analytic extensions to $z_0$ along the paths $\alpha_{p}^{-1}$. Thus obtained germs of 
 fundamental matrix solutions $X^{p0}(z)$ at $z_0$ depend only on the homotopy classes of the paths 
 $\alpha_{p}$ in the class of paths with fixed starting point $z_0$ and free endpoint lying in 
 $S_{p0}$. Let $Q$ denote the transition matrix between the fundamental matrix solutions 
 $X^{p0}$, $p=0,\infty$, near $z_0$: 
 \begin{equation} X^{\infty0}(z)=X^{00}(z)Q.\label{transm}\end{equation}
 Let $C_{p0}$, $C_{p1}$ denote the Stokes matrices at $p$ defined by the sectorial 
 matrix solutions $X^{p0}(z)$. 
 
 \begin{remark} When we renormalize $X^{p0}(z)$ by multiplication from the right by diagonal matrix $D_p$ (which means similar renormalization of the corresponding branch of diagonal 
 fundamental matrix solution $W(z)=W^p(z)$ (defining $X^{p0}$) of formal normal form at $p$), 
 this transforms the initial Stokes matrix collections and transition matrix $Q$ to 
 $$\wt C_{pj}=D_p^{-1}C_{pj}D_p, \ \wt Q=D_0^{-1}QD_{\infty}.$$
 \end{remark}
 
 \begin{definition} \cite[definition 4.2]{FIKN}. 
 A continuous family of linear systems of type (\ref{genlin}) is {\it isomonodromic,} if one can choose continuous family of canonical fundamental matrix solutions 
 $X^{p0}(z)$ at $z_0$ defined by continuous families of Stokes sectors $S_{p0}$ and paths 
 $\alpha_p$ so that the monodromy matrix in the basis given by $X^{00}$, the Stokes matrices $C_{pj}$ and the transition matrix $Q$ remain the same. 
  \end{definition} 
  
  \begin{remark} \label{constform} Isomonodromicity is equivalent to the condition saying that  the 
 formal monodromies $M_{fp}$ at singular points $p$, the Stokes matrices and the transition matrix remain the same: see (\ref{monst}). In a continuous family of linear systems (\ref{genlin}) 
 constance of formal monodromies is equivalent to constance of residue matrices of formal normal forms. It is well-known that if an isomonodromic 
  family of linear systems depends analytically on a parameter lying in a simply connected domain 
  in a complex manifold, then the corresponding fundamental matrix solutions  from the above 
  definition can be normalized to depend analytically on the parameter. See \cite[theorem 13.1 and its proof]{Bol18} for the Fuchsian case; in the case of irregular singularities the proof is analogous. 
 \end{remark}

  \subsection{Extension of solutions. Proof of Theorem \ref{thext}} 
   
  The first step of the proof of Theorem \ref{thext} is the following proposition. 
  
  \begin{proposition} \label{asympt} Fix an $\ell\in\cc$ and a path $\alpha:[0,1]\to\cc^*$ from 
  a point $s^*$ to a point $s_0$; $s^*, s_0\in\cc^*$. 
  Let a solution $(\chi(s), a(s))$ of system (\ref{isomnews}) extend holomorphically to every 
  $\alpha(t)$, $t\in[0,1)$, 
  along the path $\alpha:[0,t]\to\cc^*$, but do not extend to $s_0=\alpha(1)$ along the path 
  $\alpha$.  Let 
  $w(s)=\frac{a(s)}{2s\chi(s)}$ be the corresponding solution of Painlev\'e 3 equation (\ref{p3}). 
   Then one of the two following statements holds:
   
   (i) either $w(s_0)=0$ and $w'(s_0)=1$; in this case $a(s)$ is holomorphic at $s_0$, 
   $a(s_0)=\pm s_0$, and $\chi(s)$ has a simple pole at $s_0$ with residue $\pm\frac12$;
   
   (ii)  or $w(s)$ has a simple pole at $s_0$ with residue $-1$; in this case $a(s)$ is meromorphic near $s_0$ and has a simple pole with residue $\pm s_0$ at $s_0$; $\chi(s)$ is holomorphic 
   near $s_0$ and $\chi(s_0)=\mp\frac12$. 
   \end{proposition}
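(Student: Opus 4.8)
The plan is to reduce the statement to a blow-up analysis of the polynomial system (\ref{isomnews}), using the Painlev\'e property of $w$ as the only "soft" input. Written as the autonomous field (\ref{isomn*}), the $(\chi,a)$-components have right-hand sides polynomial in $(\chi,a)$ with coefficients ($\tfrac1{2s},\tfrac1s,s$) holomorphic on $\cc^*_s$, hence bounded along the compact path $\alpha$. By the standard continuation theorem the solution extends holomorphically to $s_0=\alpha(1)$ as long as it stays in a compact set; thus the hypothesis (non-extendability along $\alpha$) is equivalent to $(\chi(s),a(s))\to\infty$ as $s\to s_0$. So $s_0$ is a blow-up point, and this is the only scenario I must analyze.

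Next I would bring in $w$. Since $w=\frac a{2s\chi}$ solves Painlev\'e 3 equation (\ref{p3}) along $\alpha([0,1))$, the Painlev\'e property (no movable branch points; all movable singularities are simple poles of residue $\pm1$, \cite[theorem 31.1]{GLS}) guarantees that $w$ continues meromorphically to $s_0$. Hence at $s_0$ the function $w$ is of exactly one of four local types: holomorphic with $w(s_0)\neq0$, a zero $w(s_0)=0$, a simple pole of residue $+1$, or a simple pole of residue $-1$; and a direct local analysis of (\ref{p3}) at a zero (matching the singular terms $\frac{(w')^2}w-\frac1w$ against the regular left-hand side) forces $w'(s_0)=\pm1$ there. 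The computational heart is then an algebraic relation linking $(\chi,a)$ to $w$: taking the logarithmic derivative of $w=\frac a{2s\chi}$ and substituting (\ref{isomnews}) yields $\frac{w'}w=-w-\frac1w-\frac1s+\frac{2(\ell+2\chi a)}s$, whence $\chi a=\frac14\!\left(\frac{sw'}w+sw+\frac sw+1-2\ell\right)$ and, using $a=2s\chi w$,
\[ a^2=\frac{s}{2}\left(sw'+sw^2+s+(1-2\ell)w\right),\qquad \chi^2=\frac{a^2}{4s^2w^2}. \]

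Because $w$ is meromorphic at $s_0$, these right-hand sides are meromorphic, so $a^2$ and $\chi^2$ extend meromorphically to $s_0$ and $a,\chi$ admit Puiseux expansions with well-defined leading exponents. Evaluating the displayed formula in the four cases pins down the order of $a^2$. If $w(s_0)\neq0$ is finite, $a^2$ and $\chi^2$ stay finite, so $(\chi,a)$ would be bounded and extend — excluded by blow-up. A zero with $w'(s_0)=1$ gives $a^2(s_0)=s_0^2\neq0$, so $a$ is holomorphic with $a(s_0)=\pm s_0$ and $\chi=\frac a{2sw}$ has a simple pole of residue $\pm\frac12$ (with $w\sim(s-s_0)$): this is case (i). A pole of residue $-1$ gives $a^2\sim s_0^2/(s-s_0)^2$, so $a$ has a simple pole of residue $\pm s_0$ and $\chi(s_0)=\mp\frac12$: this is case (ii). The two remaining types — a zero with $w'(s_0)=-1$ and a pole of residue $+1$ — make $a^2$ have an odd-order (simple) zero, resp. pole, which would force a genuine square-root branch point of $(\chi,a)$.

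The main obstacle, and the final step, is to exclude these two branch cases; I would do this by a dominant-balance argument on the first-order system (\ref{isomn*}) itself. If $a$ and $\chi$ had leading Puiseux terms equal to nonzero multiples of $(s-s_0)^{\pm1/2}$ and $(s-s_0)^{\mp1/2}$ respectively, then in the equation of (\ref{isomn*}) governing whichever of $\chi,a$ carries the exponent $-\frac12$, the left-hand derivative would scale like $(s-s_0)^{-3/2}$, whereas every term on the right-hand side — including the cubic monomials $\chi^2a$ and $\chi a^2$ — scales at worst like $(s-s_0)^{-1/2}$, so the leading orders cannot match. Equivalently, a full classification of the power-law balances shows that the only self-consistent ones are the two integer balances $\{\chi\ \text{bounded},\ a\sim(s-s_0)^{-1}\}$ and $\{a\ \text{bounded},\ \chi\sim(s-s_0)^{-1}\}$, the symmetric "both-cubic" balance being impossible since matching coefficients forces simultaneously $p+q=-1$ and $p=-q$ for the two exponents. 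Hence the branch types do not occur, $a$ and $\chi$ are single-valued (meromorphic) at $s_0$, and only cases (i) and (ii) survive with the signs recorded above, which closes the proof.
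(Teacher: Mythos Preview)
Your strategy is essentially the paper's: derive the scalar relation expressing $a^2$ as a differential polynomial in $w$ (your displayed formula is algebraically equivalent to the paper's equation $w'=\tfrac{2a^2}{s^2}-1+\tfrac{(2\ell-1)w}{s}-w^2$), and then do a case split on the local type of $w$ at $s_0$. The cases $w(s_0)\ne0$ finite, zero with $w'(s_0)=1$, and pole of residue $-1$ are handled correctly.

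However, your treatment of the two remaining cases contains a computational error. You assert that a zero of $w$ with $w'(s_0)=-1$ (resp.\ a pole of residue $+1$) forces $a^2$ to have a \emph{simple} zero (resp.\ pole), and you then invoke a dominant-balance argument to rule out the resulting square-root branching. In fact, the Painlev\'e local expansion determines the next coefficient as well: at a residue-$+1$ pole one has $w=\tfrac{1}{s-s_0}+\tfrac{2\ell-1}{2s_0}+O(s-s_0)$, and at a zero with $w'(s_0)=-1$ one has $w=-(s-s_0)+\tfrac{1-2\ell}{2s_0}(s-s_0)^2+O((s-s_0)^3)$. Plugging these into your own formula $a^2=\tfrac{s}{2}\bigl(sw'+sw^2+s+(1-2\ell)w\bigr)$ makes the apparently leading (order $\pm1$) term cancel: in the residue-$+1$ case $a^2$ is actually \emph{holomorphic} at $s_0$, and in the $w'(s_0)=-1$ case $a^2$ has a zero of order at least two. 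In both situations $a^2$ and $\chi^2=\tfrac{a^2}{4s^2w^2}$ stay bounded as $s\to s_0$ along $\alpha$, hence so do $a$ and $\chi$, and your own continuation argument then forces holomorphic extension to $s_0$ --- contradicting the hypothesis directly. So these two cases are excluded by boundedness, not by ruling out half-integer branching, and the dominant-balance step (whose premise is wrong) is unnecessary. This is exactly how the paper disposes of the residue-$+1$ case; for the $w'(s_0)=-1$ case the paper uses a closely related balance argument in the variable $u=\chi^{-1}$.

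Once you correct these two computations, your proof is complete and coincides with the paper's.
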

   \begin{proof} The variable change $u=\chi^{-1}$ transforms system (\ref{isomnews}) to 
  \begin{equation}\begin{cases} u'_s=\frac1{2s}(-au^2+2\ell u+4a)\\
  a'_s=\frac2{us}(a^2-s^2)+\frac{a\ell}s\end{cases}\label{prfield}
 \end{equation}
 Therefore, differentiating the formula 
 \begin{equation} w(s)=\frac{a(s)}{2s\chi(s)}=\frac{a(s)u(s)}{2s}\label{waus}\end{equation}
  yields
 $$w'=\frac1{2s}(\frac1{2s}(-a^2u^2+2\ell a u+4a^2)+\frac2s(a^2-s^2)+\frac{\ell au}s)-
 \frac{au}{2s^2}$$
 \begin{equation}=\frac{2a^2}{s^2}-1+ \frac{(2\ell-1)w}s-w^2.\label{eqwu}\end{equation}
 
 \begin{proposition} \label{meroot} Let $\alpha$, $s_0$ and $(\chi(s),a(s))$ be the same, as in the conditions of Proposition \ref{asympt}. Then 
 both $\chi^2(s)$ and $a^2(s)$ are meromorphic on a neighborhood of the point $s_0$. 
 \end{proposition}
 \begin{proof} Meromorphicity of the function $a^2(s)$ near $s_0$ 
 follows from equation (\ref{eqwu}) and meromorphicity of the function $w(s)$ on the universal cover over $\cc^*_s$ (Painlev\'e property). This together 
 with (\ref{waus}) implies meromorphicity of the function $\chi^2(s)=\frac{a^2(s)}{4s^2w^2(s)}$.  
 \end{proof}
 
 At least one of the values $\chi(s_0)$, $a(s_0)$ should be infinite: otherwise, the solution 
 $(\chi(s), a(s))$ would be holomorphic at $s_0$, by Existence and Uniqueness Theorem for ODEs.

    Case 1): $w(s)$ is holomorphic at $s_0$. This together with (\ref{eqwu}) implies that 
    $a^2(s)$ is holomorphic at $s_0$. Thus, $a(s_0)$ is finite, hence
   $\chi(s_0)=\infty$, see the above 
    discussion.   Therefore, $u(s)$ and $w(s)=\frac{a(s)u(s)}{2s}$ vanish at $s=s_0$. 
    Every solution of Painlev\'e 3 equation (\ref{p3}) 
    has derivative $\pm1$ at each its zero. This is well-known and 
    follows directly from (\ref{p3}): the polar part 
    of its right-hand side at a zero of $w(s)$ comes from $\frac{(w')^2}w-\frac1w$, 
    and it  should cancel out. Now equalities $w(s_0)=0$, $w'(s_0)=\pm1$ together with 
    (\ref{eqwu}) imply that either $a(s_0)=\pm s_0$ if $w'(s_0)=1$, or $a(s_0)=0$ if $w'(s_0)=-1$. 
    In the second case the functions $a^2(s)$, $u^2(s)$ are both holomorphic (Proposition  \ref{meroot}) and both vanish at 
    $s_0$. One has $a(s)=c\sqrt{s-s_0}(1+o(1))$, $u(s)=-2sc^{-1}\sqrt{s-s_0}(1+o(1))$, 
    as $s\to s_0$, due to the latter statement and the 
    asymptotics $w(s)=\frac{a(s)u(s)}{2s}=-(s-s_0)(1+o(1))$. But the above asymptotics of 
    the function $u(s)$ contradicts the first equation in (\ref{prfield}): 
    the left-hand side should tend to infinity, as 
    $s\to s_0$, while the right-hand side doesn't. Thus, the second case  is 
    impossible. Hence, $w(s_0)=0$, $w'(s_0)=1$, $a(s_0)=\pm s_0\neq0$, and $a^2(s)$ is 
    holomorphic at $s_0$ (Proposition \ref{meroot}). Hence, $a(s)$ is holomorphic at $s_0$.
    This together with holomorphicity of $w(s)$ and 
    equality $w(s)=\frac{a(s)u(s)}{2s}=(s-s_0)(1+o(1))$ implies holomorphicity of 
    the function $u(s)$, the equalities $u(s_0)=0$, $u'(s_0)=\pm2$ 
    and Statement (i) of Proposition \ref{asympt}. 
    
   Case 2): $w(s)$ has a pole at $s_0$. Then this pole is simple with residue $\pm1$, see 
    \cite[p.158]{GLS}. In the case, when the sign is $+$, one has 
    \begin{equation} w(s)=\frac1{s-s_0}+\frac{2\ell-1}{2s_0}+O(s-s_0),\label{polas}\end{equation} 
    see  \cite[p.158]{GLS}. This together with equation (\ref{eqwu}) implies that the 
    Laurent series polar parts 
    at $s_0$  of the $w$-terms in its left- and right-hand sides are equal, and hence, $a^2(s)$ is  holomorphic at $s_0$. This together with (\ref{waus}) implies that $u(s)\to \infty$, hence, 
    $\chi(s)\to0$, as $s\to s_0$.  If $a(s_0)\neq0$, then $a(s)$ is holomorphic at $s_0$ and 
    $\chi=\frac{a}{2sw}$ has a simple zero at $s_0$. Let us now suppose that $a(s_0)=0$. Then 
    $u(s_0)=\infty$, by (\ref{waus}), and thus, $\chi(s_0)=0$. Finally, the point $(a(s_0),\chi(s_0))$ 
    is finite, -- a contradiction. Therefore, the residue of the function $w(s)$ at $s_0$ is equal to 
    $-1$. This together with (\ref{eqwu})  implies that the function $a^2(s)$ is meromorphic 
    near $s_0$ with order two pole at $s_0$ and asymptotics 
    $$a^2(s)=\frac{s_0^2}{(s-s_0)^2}(1+o(1)), \text{ as } s\to s_0.$$
    Hence, $a(s)$ is meromorphic near $s_0$ and has simple pole with residue 
    $\pm s_0$ at $s_0$. Together with (\ref{waus}), this implies that $\chi(s)$ is holomorphic 
    near $s_0$ and $\chi(s_0)=\mp\frac12$. Thus, Statement (ii) of Proposition \ref{asympt} 
    holds. Proposition \ref{asympt} is proved.
    \end{proof}
    
    \begin{proof} {\bf of Theorem \ref{thext}.} Statements 1) and 2) of Theorem \ref{thext} 
    follow from Proposition \ref{asympt}. Let us prove its Statement 3). A solution $(\chi(s), a(s))$ 
     of (\ref{isomnews}) yields an isomonodromic family of linear systems (\ref{mchoy}), see 
     \cite[theorem 6.6]{bibgl}. 
     Therefore, there exist analytic families $X^{00}(z,s)$, $X^{\infty0}(z,s)$ of sectorial 
    matrix solutions in Stokes sectors $S_{00}$, $S_{\infty0}$ of systems (\ref{mchoy}) with $\chi=\chi(s)$, $a=a(s)$ ($S_{p0}=S_{p0}(s)$ depend on $s$ for $p=0,\infty$) such that 
    
    - the Stokes matrices at the origin and at infinity defined by $X^{p0}(z,s)$, $p=0,\infty$,  remain constant (independent on $s$);
    
    - the  transition matrix $Q$,  \ $X^{\infty0}(z,s)=X^{00}(z,s)Q$, see (\ref{transm}), between analytic extensions to $z_0=1$ of the fundamental matrix solutions $X^{p0}$ along paths $\alpha_p^{-1}=\alpha_{ps}^{-1}\subset\cc^*$ 
    (depending continuously on $s$)  is also  constant. 
    
    As $s$ is changed, the Stokes sectors turn and the paths $\alpha_{ps}$ defining the above analytic extension 
    are deformed continuously (homotopied with  fixed starting point $z_0$ and free  
    endpoints  lying in variable sectors $S_{00}$, $S_{\infty0}$). As $s$ makes complete tour around the origin, both sectors $S_{00}$, $S_{\infty0}$ make complete tours, but in opposite directions: see (\ref{strays0}), (\ref{straysinf}). 
    The system corresponding to the analytic extension of the solution $(\chi(s), a(s))$ along 
    the circuit in question has the same formal normal forms and Stokes matrices at zero and 
    at infinity (for appropriately normalized canonical sectorial solutions in $S_{p0}$). But the homotopy classes of the paths defining the (unchanged) transition  matrix are changed. 

\begin{proposition} \label{trivmon} Let a system (\ref{mchoy}) have trivial monodromy. 
Then the corresponding transition matrix $Q$ given by (\ref{transm}) is path-independent: 
it depends only on the normalization of canonical sectorial fundamental matrix solutions 
in $S_{p0}$ by 
multiplication by diagonal matrices. 
\end{proposition}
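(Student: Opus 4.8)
The plan is to trace exactly how the germs at $z_0$ of the canonical sectorial solutions $X^{00}$ and $X^{\infty0}$ depend on the chosen paths, and to show that triviality of the monodromy makes this dependence vanish. Recall from the construction preceding (\ref{transm}) that each germ $X^{p0}$ at $z_0$ is obtained by analytic continuation of the canonical sectorial solution (initially defined in the Stokes sector $S_{p0}$) along the reversed path $\alpha_p^{-1}$, and that it depends on $\alpha_p$ only through its homotopy class with fixed starting point $z_0$ and free endpoint lying in $S_{p0}$. Write $g_p$ for the resulting germ, so that $g_\infty = g_0 Q$, i.e. $Q = g_0^{-1} g_\infty$.

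First I would describe the space of admissible homotopy classes. Each Stokes sector $S_{p0}$ is simply connected and $\pi_1(\cc^*, z_0)\cong\zz$ is generated by a simple loop $\gamma$ around the origin; hence any two admissible paths $\alpha_p$, $\alpha_p'$ differ, up to homotopy respecting the free-endpoint convention, by a power $\gamma^{k_p}$. The one computation needed is the effect of this winding: continuing $X^{p0}$ along a path that winds once more around the origin multiplies its germ at $z_0$ on the right by the monodromy matrix $M_p$ of the system written in the basis $X^{p0}$, so the germ changes from $g_p$ to $g_p M_p^{k_p}$.

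Next I would invoke the hypothesis. Triviality of the monodromy operator means precisely that analytic continuation around $\gamma$ acts as the identity on the solution space, so $M_p = Id$ in every solution basis; in particular $M_0 = M_\infty = Id$. Consequently each germ $g_p$ is independent of the winding $k_p$, hence independent of the homotopy class of $\alpha_p$, and therefore of the path $\alpha_p$ altogether. Since $Q = g_0^{-1} g_\infty$, the transition matrix $Q$ is path-independent.

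Finally I would isolate the residual freedom. The canonical sectorial solution $X^{p0}$ is itself defined only up to right multiplication by a constant diagonal matrix $D_p$, coming from the normalization of the diagonal fundamental solution $W^p(z)$ of the formal normal form at $p$ (cf. Remark \ref{rstokes}); replacing $X^{p0}$ by $X^{p0}D_p$ sends $g_p$ to $g_p D_p$ and hence $Q$ to $D_0^{-1} Q D_\infty$, exactly as recorded in the earlier remark. This diagonal conjugation is the only surviving dependence, which is the assertion of the proposition. I expect the sole delicate point to be the bookkeeping of the winding-versus-monodromy relation together with the free-endpoint convention for the paths; once that is stated carefully, the vanishing of $M_p$ collapses the path dependence at once.
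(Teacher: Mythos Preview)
Your proof is correct and follows essentially the same idea as the paper's: trivial monodromy forces analytic continuation around loops in $\cc^*$ to act trivially on solutions, so the germs $X^{p0}$ at $z_0$ are path-independent and hence so is $Q$. The paper phrases this more tersely by observing that solutions are single-valued holomorphic on $\cc^*$, whence continuation is homotopy-independent; your version makes the winding-number bookkeeping and the free-endpoint convention explicit, but the underlying argument is the same.
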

\begin{proof} Each solution of system (\ref{mchoy}) in question  is holomorphic on $\cc^*$, by triviality of monodromy. This implies that the result of its 
analytic continuation along a path $\alpha$ from some point to $z_0$ is independent on its homotopy class in the class of paths in $\cc^*$ with fixed endpoints. Therefore analytic extension of 
each matrix solution $X^{p0}$ to $z_0$, and thus, the transition matrix $Q$, are path-independent. 
\end{proof}
     
 \begin{remark}  \label{trivst} As was shown in \cite[proposition 4.6]{bibgl}, triviality of monodromy of a system (\ref{mchoy}) implies triviality of its Stokes matrices and formal monodromies at both singular  points $0$ and $\infty$.
 \end{remark}
 
 Let the initial condition $(\chi^*,a^*, s^*)$ correspond to a system (\ref{mchoy}) with trivial 
 monodromy. Consider the corresponding solution $(\chi(s), a(s))$ 
 of system (\ref{isomnews}), which yields an isomonodromic deformation of linear system in question.  
 As $s$ makes a circuit around the origin, the initial condition $(\chi^*,a^*,s^*)$ is transformed to another triple $(\wt\chi, \wt a, s^*)$ corresponding to a system (\ref{mchoy}) with trivial 
 monodromy, the same Stokes sectors $S_{pj}$, $p=0,\infty$, $j=0,1$ (they are completely 
 defined by $s^*$, which remains the same)  
  and the same transition matrix $Q$, which is path-independent (Proposition \ref{trivmon}). 
  
  \begin{proposition} \label{gaugeq} Systems (\ref{mchoy}) corresponding to the above  
  $(\chi^*,a^*, s^*)$ and $(\wt\chi, \wt a, s^*)$ either coincide, or differ by sign of the two 
  first cordinates: $\wt\chi=-\chi^*$, $\wt a=-a^*$. Or equivalently, the corresponding 
  systems (\ref{mchoy}) either are the same, or are transformed one to the 
  other by the constant diagonal gauge transformation $Y\mapsto\diag(1,-1)Y$. 
  \end{proposition}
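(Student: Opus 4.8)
The plan is to prove that the two linear systems (\ref{mchoy}) attached to $(\chi^*,a^*,s^*)$ and to $(\wt\chi,\wt a,s^*)$ are conjugate by a \emph{constant} invertible matrix, and then to classify those constant conjugations that preserve the special shape of (\ref{mchoy}); the sign dichotomy in the statement will fall out of this classification. Write $A(z)$, $\wt A(z)$ for the coefficient matrices of the two systems, which have their only singularities at $z=0$ and $z=\infty$, both irregular nonresonant of Poincar\'e rank $1$, with main-term matrices $\mathbf K$ and $\mathbf N$ whose eigenvalues ($\tfrac12,0$ for $\mathbf K$ and $-\tfrac12,0$ for $\mathbf N$) are independent of the parameters. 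Since both systems have trivial monodromy, every canonical sectorial solution $X^{p0}(z)$, $\wt X^{p0}(z)$, $p\in\{0,\infty\}$, is single-valued and holomorphic on $\cc^*$; by Remark \ref{trivst} their Stokes matrices and formal monodromies vanish, so each system is analytically equivalent near each singular point to its formal normal form, i.e. $X^{00}=H_0W$ and $\wt X^{00}=\wt H_0\wt W$ with $H_0,\wt H_0$ holomorphic and invertible at $0$ (and similarly at $\infty$). Isomonodromicity keeps the residue matrices of the formal normal forms constant (Remark \ref{constform}), and the main-term eigenvalues are parameter-free, so after a common normalization of branches one gets $\wt W=W$ at each singular point.

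I then set $H(z):=\wt X^{00}(z)\,X^{00}(z)^{-1}$. It is single-valued and holomorphic on $\cc^*$, and near $0$ it equals $\wt H_0H_0^{-1}$, hence is holomorphic and invertible there. Using the hypothesis $\wt Q=Q$ together with the connection relation (\ref{transm}), namely $X^{\infty0}=X^{00}Q$ and $\wt X^{\infty0}=\wt X^{00}Q$, the same function rewrites near infinity as $H=\wt X^{\infty0}(X^{\infty0})^{-1}$, which is holomorphic and invertible at $\infty$ as well. Thus $H$ is a holomorphic $\gl_2(\cc)$-valued function on the compact sphere $\cp^1$, and therefore constant, $H\equiv H_0\in\gl_2(\cc)$, giving $\wt A(z)=H_0A(z)H_0^{-1}$ identically.

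The last step is linear algebra. Conjugating the leading matrices gives $H_0\mathbf KH_0^{-1}=\wt{\mathbf K}$ and $H_0\mathbf NH_0^{-1}=\wt{\mathbf N}$. Since $\mathbf K$ and $\wt{\mathbf K}$ share the eigenvalue $\tfrac12$ with the common eigenvector $(1,0)^{\mathsf T}$, and $\mathbf N,\wt{\mathbf N}$ share the eigenvalue $0$ with the common eigenvector $(0,1)^{\mathsf T}$, the matrix $H_0$ must preserve both coordinate axes and so is diagonal, $H_0=\diag(p,q)$. Reading off the off-diagonal entries yields $\wt\chi=\tfrac pq\chi$ (from $\mathbf K$) and $\wt\chi=\tfrac qp\chi$ (from $\mathbf N$), while conjugating $\mathbf R$ gives $\wt a=\tfrac pq a=\tfrac qp a$; as $(\chi,a)\neq(0,0)$ at least one relation is nontrivial and forces $p^2=q^2$. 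If $p=q$ then $H_0$ is scalar and the two systems coincide; if $p=-q$ then $H_0$ is a scalar multiple of $\diag(1,-1)$, and conjugation by $\diag(1,-1)$ sends $\mathbf R$ to $\wt{\mathbf R}$ with $\wt\chi=-\chi$ and $\wt a=-a$ (the diagonal entries, hence $\ell$, being automatically preserved since $\wt\chi\wt a=\chi a$). I expect the real work to lie in the first step — securing that the gauge $H$ extends holomorphically across \emph{both} irregular points simultaneously, which is exactly where triviality of the Stokes matrices (convergence of the sectorial normalizations) and the identity $\wt Q=Q$ enter; the sign ambiguity $\diag(1,\pm1)$ in the conclusion is precisely the residual diagonal renormalization freedom of the canonical sectorial solutions.
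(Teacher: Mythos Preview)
Your proof is correct and follows the same overall strategy as the paper: establish that the two systems are related by a constant gauge $H\in\gl_2(\cc)$, then use the triangular shapes of $\mathbf K$ and $\mathbf N$ to force $H$ diagonal with eigenvalue ratio $\pm1$. The only real difference is in the first step: the paper invokes \cite[proposition 2.5, p.~319]{JMU1} as a black box (equal formal normal forms, Stokes data and transition matrix imply constant gauge equivalence), whereas you unpack this by writing $H(z)=\wt X^{00}(z)X^{00}(z)^{-1}$, checking holomorphicity at $0$ and $\infty$ via convergence of the sectorial normalizations (trivial Stokes) and the identity $\wt Q=Q$, and concluding by Liouville. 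Your version is more self-contained; the paper's is shorter but relies on the cited reference. The final linear-algebra step is identical in spirit to the paper's.
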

  \begin{proof}  The paths defining the transition matrix $Q$ for the new system can be chosen 
  the same, as for the initial system (Proposition \ref{trivmon}). Therefore, taking appropriate 
  normalization of the sectorial matrix solutions $X^{00}$, $X^{\infty0}$ for each system 
  one can achieve equality of the transition matrices. The Stokes matrices are trivial (Remark 
  \ref{trivst}). The formal normal forms of both systems at each singular point 
  are the same. Indeed, 
  they are completely defined by $s$ and the  residue matrix eigenvalue $\ell$, which are the same for both systems, since $\ell$ is independent on $s$ (see also Remark \ref{constform}). Therefore, the formal normal forms, the Stokes and transition matrices
   of both systems are the same for appropriate normalizations of the sectorial matrix solutions 
   $X^{p0}(z)$, $p=0,\infty$. This together with \cite[proposition 2.5, p.319]{JMU1} 
   implies that the systems are gauge equivalent: this means that there exists a matrix $H\in GL_2(\cc)$ such that the variable change 
  $Y\mapsto HY$ transforms one linear system to the other. Conjugation by the matrix $H$ should  preserve the opposite triangular forms of the  matrices $K$ and $N$. Therefore, $H$ is diagonal, since each one of the latter matrices has distinct eigenvalues. Moreover, conjugation by $H$ should preserve equality of opposite off-diagonal terms $\chi$ 
  of the matrices $K$ and $N$. This implies that the ratio of its eigenvalues is a square 
  root of unity, and $H=\diag(1,\pm1)$ up to scalar factor. Then one has 
  $(\wt\chi,\wt a)=(\pm\chi^*,\pm a^*)$. This proves Proposition \ref{gaugeq}.
  \end{proof}
  
  Proposition \ref{gaugeq} implies (together with Statement 1) of Theorem \ref{thext}) 
   that the solution $(\chi(s),a(s))$ of (\ref{isomnews}) with 
  initial conditions corresponding to a system (\ref{mchoy}) with trivial monodromy either  is meromorphic on $\cc^*$, or changes sign after analytic extension along a simple circuit 
  around the origin.  Therefore, the latter analytic extension does not change the ratio 
  $w=\frac a{2s\chi}$. Thus, $w(s)$ is meromorphic on $\cc^*$. Statement 3) of Theorem 
  \ref{thext} is proved. 
  
Statement 3) holds for every initial condition $(0, a^*, s^*)$ such that 
$(a^*,s^*)\in Constr_\ell$, since it corresponds to a linear system with trivial monodromy:  for every 
  constriction $(B,A;\omega)$ the corresponding linear system (\ref{mchoy}) with 
  $\chi=0$, $\ell=\frac B\omega$, $s=\frac A\omega$, $a=\omega^{-1}$ has trivial monodromy, see  
    \cite[proposition 4.1]{bibgl}. This proves Statement 4). 
  
  Let us now prove Statement 5). Let  $(0, a^*, s^*)$ be an initial condition with  
$(a^*,s^*)\in\mcc_{\ell,k}$ for some $k\in\nn$. The 
corresponding solution $(\chi(s), a(s))$ is either meromorphic single-valued on $\cc^*$, 
or meromorphic double-valued with sign-reversing monodromy. This means that it 
changes sign after analytic extension along a simple counterclockwise circuit in $\cc_s^*$ around the origin. 
This follows from Statement 4). It remains to show that the solution is single-valued. 
Indeed, suppose the contrary: it has sign-reversing monodromy for certain $(a^*,s^*)\in\mcc_{\ell,k}$. Then it has sign-reversing monodromy for every $(a^*,b^*)\in\mcc_{\ell,k}$, by continuity and connectivity of the curve 
$\mcc_{\ell,k}$. In particular, this holds for $(a^*,s^*)\in\mcc_{\ell,k}$ arbitrarily close to 
$\beta_k=(0,s_{\ell,k})$. Passing to the rescaled 
coordinates $(\wt y_1,\wt y_2)$, $(\chi,a):=(a^*\wt y_1,a^*\wt y_2)$, and taking the limit of 
thus rescaled system (\ref{isomnews}) and the solution in question with $\chi(s^*)=0$, $a(s^*)=a^*$, as 
$(a^*,s^*)\to(0,s_{\ell,k})$,  we get linear system (\ref{matrex}) and its solution $y=(y_1,y_2)(s)$ 
with $y(s_{\ell,k})=(0,1)$. The limit solution $y(s)$ should also have sign-reversing monodromy, by construction. 
Hence, -1 is an eigenvalue of the monodromy operator of  system (\ref{matrex}). 
But its monodromy operator is unipotent: its eigenvalues are equal to exponent of 
($2\pi i$ times the eigenvalues of its residue matrix at the origin); the residue eigenvalues  are $\pm\ell\in\zz$. The contradiction thus obtained finishes 
  the proof of Theorem \ref{thext}.
  \end{proof}
  
\section{Acknowledgements}
I am grateful to  V.M.Buchstaber and Yu.S.Ilyashenko for attracting 
my attention to problems on model of Josephson effect and helpful discussions. 
I am grateful to Yu.P.Bibilo, V.I.Gromak, V.Yu.Novokshenov for helpful discussions and to Yu.P.Bibilo for reading the paper and helpful remarks.

\end{document}